\title{LancBiO: Dynamic Lanczos-aided Bilevel Optimization via Krylov Subspace}
\author{Yan Yang\\
LSEC, AMSS\\
Chinese Academy of Sciences\\
University of Chinese Academy of Sciences\\
\texttt{yangyan@amss.ac.cn} \\
\And
Bin~Gao\thanks{Corresponding author.}\ \ \ \& Ya-xiang~Yuan\\
LSEC, AMSS\\
Chinese Academy of Sciences\\
\texttt{\{gaobin,yyx\}@lsec.cc.ac.cn} \\
}
\theoremstyle{plain}
\newtheorem{theorem}{Theorem}[section]
\newtheorem{proposition}[theorem]{Proposition}
\newtheorem{lemma}[theorem]{Lemma}
\theoremstyle{definition}
\newtheorem{definition}[theorem]{Definition}
\newtheorem{assumption}[theorem]{Assumption}
\theoremstyle{remark}
\newtheorem{remark}[theorem]{Remark}
\newcommand{\norm}[1]{\left\| {#1} \right\| }
\newcommand{\abs}[1]{\left | {#1} \right |}
\newcommand{\kh}[1]{\left ( {#1} \right ) }
\newcommand{\hkh}[1]{\left\{ {#1} \right\}}
\newcommand{\revise}[1]{{#1}}
\DeclareMathOperator*{\argmin}{arg\,min}
\newcommand{\boldt}[1]{\textbf{#1}}
\newcommand{\expnumber}[2]{{#1}\mathrm{e}{#2}}
\begin{document}

\maketitle

\begin{abstract}
Bilevel optimization, with broad applications in machine learning, has an intricate hierarchical structure. Gradient-based methods have emerged as a common approach to large-scale bilevel problems. However, the computation of the hyper-gradient, which involves~a Hessian inverse vector product, confines the efficiency and is regarded as~a bottleneck. To circumvent the inverse, we construct a sequence of low-dimensional approximate Krylov subspaces with the aid of the Lanczos process. As a result, the constructed subspace is able to dynamically and incrementally approximate the Hessian inverse vector product with less effort and thus leads to a favorable estimate of the hyper-gradient. Moreover, we propose a~provable subspace-based framework for bilevel problems where one central step is to solve a small-size tridiagonal linear system. To the best of our knowledge, this is the first time that subspace techniques are incorporated into bilevel optimization. This successful trial not only enjoys $\mathcal{O}(\epsilon^{-1})$ convergence rate but also demonstrates efficiency in a synthetic problem and two deep learning tasks.
\end{abstract}

\section{Introduction}\label{sec:intro}
Bilevel optimization, in which upper-level and lower-level problems are nested with each other, mirrors a~multitude of applications, e.g., game theory~\citep{stackelberg1952stack}, hyper-parameter selection \citep{ye2023difference}, data poisoning \citep{liu2024poisoning}, meta-learning~\citep{bertinetto2018meta}, neural architecture search~\citep{liu2018darts,wang2022zarts}, adversarial training~\citep{wang2021adver}, reinforcement learning~\citep{chakraborty2024parl,thoma2024contextualBiRL,yang2025sobiRL}, computer vision \citep{liu2021CVBiO}. In this paper, we consider the bilevel problem:
\begin{equation}\label{eq:standar_bio}
    \begin{array}{cl}
    \min\limits_{x \in \mathbb{R}^{d_x}}& \varphi(x):=f\left(x,y^*(x)\right)
    \\
    \mathrm{s.\,t.}& y^*(x) \in \argmin\limits_{y \in \mathbb{R}^{d_y}} g(x,y),    
    \end{array}
\end{equation}
where the upper-level function~$f$ and the lower-level function~$g$ are defined on~$\mathbb{R}^{d_x}\times\mathbb{R}^{d_y}$. $\varphi$~is called the \emph{hyper-objective}, and the gradient of $\varphi(x)$~is referred to as the \emph{hyper-gradient} \citep{pedregosa2016hyperparameter,grazzi2020iteration,yang2023accelerating,chen2023igmf} if it exists. In contrast to standard single-level optimization problems, bilevel optimization is inherently challenging due to its intertwined structure. Specifically, the formulation~\eqref{eq:standar_bio} underscores the crucial role of the lower-level solution~$y^*(x)$ in each update of~$x$. 

One of the focal points in recent bilevel methods has shifted towards nonconvex upper-level problems coupled with strongly convex lower-level problems~\citep{ghadimi2018approximation,ji2021stocbio,dagreou2022soba,li2022fsla,hong2023ttsa,hu2024contextualstoc}. This configuration ensures that $y^*(x)$ is a single-valued function of~$x$, i.e., $y^*(x)=\argmin_{y\in \mathbb{R}^{d_y}} g(x,y)$. Subsequently, $\nabla \varphi(x)$ can be computed via the implicit function theorem following~\citep{ghadimi2018approximation},
\begin{equation}\label{eq:hypergradient}
    \begin{aligned}
        \nabla \varphi (x)=&~\nabla _xf\left( x,y^*\left( x \right) \right) -\nabla^2 _{xy}g\left( x,y^*\left( x \right) \right) \left[ \nabla _{yy}^{2}g\left( x,y^*\left( x \right) \right) \right] ^{-1}\nabla _yf\left( x,y^*\left( x \right) \right).
    \end{aligned}
\end{equation}
Gradient methods based on the hyper-gradient, $x_{k+1}=x_k-\lambda\nabla \varphi (x_k),$ are known as the approximate implicit differentiation (AID) based methods~\citep{ji2021stocbio,liu2023average,huang2024HJFBIO}. However, the computation of the hyper-gradient~\eqref{eq:hypergradient} suffers from two pains: 1)~solving the lower-level problem to obtain $y^*(x)$; 2)~assembling the \emph{Hessian inverse vector product}
\begin{equation}\label{eq:def_v_star}
    v^*(x) := \left[ \nabla _{yy}^{2}g\left( x,y^*\left( x \right) \right) \right] ^{-1}\nabla _yf\left( x,y^*\left( x \right) \right),
\end{equation}
or equivalently, solving a~large linear system {in terms of $v$},
\begin{equation}\label{eq:lin_sys}
    \nabla^2_{yy}g(x,y^*(x))v=\nabla_yf(x,y^*(x)).
\end{equation}
To this end, it is beneficial to adopt a few inner iterations to approximate~$y^*(x)$ and~$v^*(x)$ within each outer iteration~({i.e.}, the update of~$x$). Note that the approximation accuracy of~$v^*(x)$ is crucial for AID-based methods; see~\citep{ji2022will,li2022fsla}. {Specifically, the left of \cref{fig:stocbio}~confirms that the more inner iterations, the higher quality of the estimate of $v^*$, and the more enhanced descent of the objective function within the same number of outer iterations.}


\begin{figure*}[tbp]
\centering
\begin{minipage}{0.49\textwidth}
    \centering
    \!\!\includegraphics[width=1\linewidth]{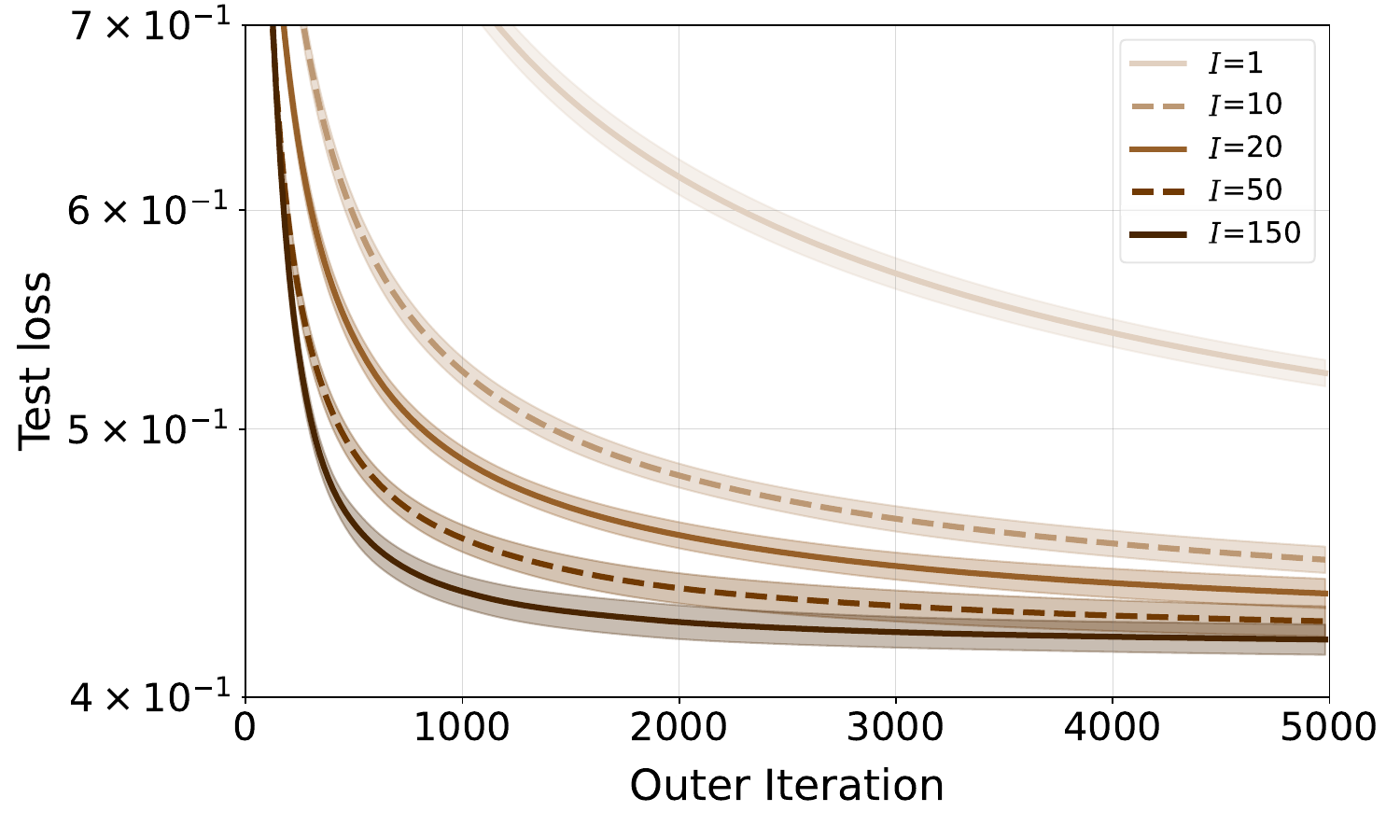}
\end{minipage}
\,
\begin{minipage}{0.493\textwidth}
    \centering
    \vspace{0.08cm}
    \includegraphics[width=1\linewidth]{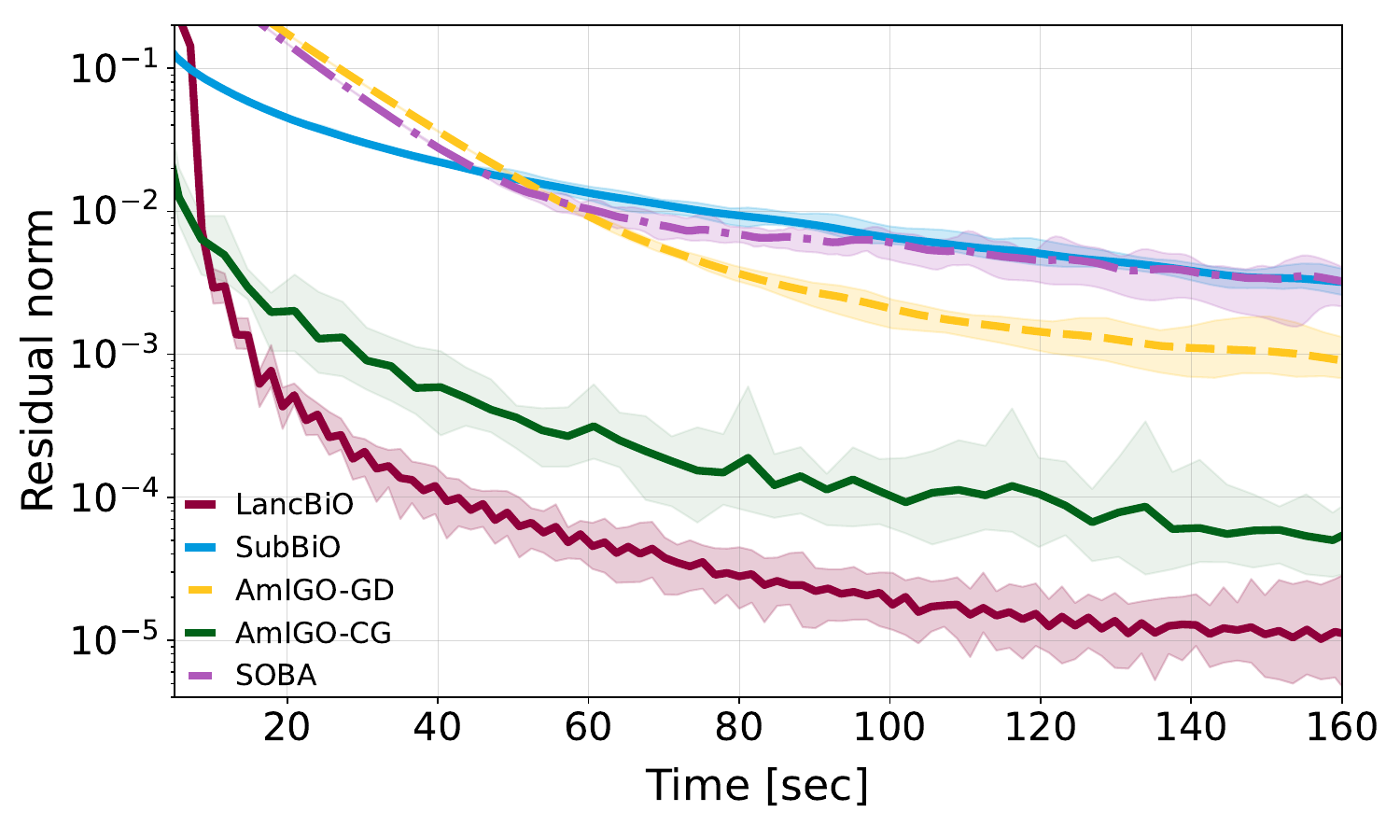}
\end{minipage}
\caption{\boldt{Left:} test loss for the method stocBiO \citep{ji2021stocbio} with different inner iterations $I$ to approximate the Hessian inverse vector product; \boldt{Right:} Estimation error of the Hessian inverse vector product in hyper-data cleaning task with corruption rate $0.5$ for different methods:  LancBiO and SubBiO (ours), AmIGO \citep{arbel2022amortized}, and SOBA~\citep{dagreou2022soba}.}
\label{fig:stocbio}
\end{figure*}


\textbf{Approximation:} Existing efforts are dedicated to approximating~$v^*$ in different fashions by regulating the number of inner iterations, e.g., the Neumann series approximation~\citep{ghadimi2018approximation,ji2021stocbio} for the inverse, gradient descent~\citep{arbel2022amortized,dagreou2022soba} and conjugate gradient descent~\citep{pedregosa2016hyperparameter,yang2023accelerating} for the linear system.

\textbf{Amortization:} Moreover, there are studies aimed at amortizing the cost of approximation through outer iterations. These methods include using the inner estimate from the previous outer iteration as~a~warm start for the current outer iteration~\citep{ji2021stocbio,arbel2022amortized,dagreou2022soba,ji2022will,li2022fsla,xiao2023galet}, or employing a~refined step size control \citep{hong2023ttsa}.

Subspace techniques, widely adopted in nonlinear optimization~\citep{yuan2014review}, approximately solve large-scale problems in lower-dimensional subspaces, which not only reduce the computational cost significantly but also enjoy favorable theoretical properties as in full space models. Taking into account the above two principles, it is reasonable to consider subspace techniques in bilevel optimization. Specifically, we can efficiently amortize the construction of low-dimensional subspaces and sequentially solve linear systems~\eqref{eq:lin_sys} in these subspaces to approximate~$v^*$ accurately. 

\subsection{Contributions}
In this paper, taking advantage of the Krylov subspace and the Lanczos process, we develop an innovative subspace-based framework---LancBiO, which features an efficient and accurate approximation of the Hessian inverse vector product~$v^*$ in the hyper-gradient---for bilevel optimization. The main contributions are summarized as follows.

Firstly, we build up a dynamic process for constructing low-dimensional subspaces that are tailored from the Krylov subspace for bilevel optimization. This process effectively reduces the large-scale subproblem~\eqref{eq:lin_sys} to the small-size tridiagonal linear system, which draws on the spirit of the Lanczos process. To the best of our knowledge, this is the first time that the subspace technique is leveraged in bilevel optimization.

Moreover, the constructed subspaces enable us to dynamically and incrementally approximate~$v^*$ across outer iterations, thereby achieving an enhanced estimate of the hyper-gradient; the right of \cref{fig:stocbio} illustrates that the proposed LancBiO reaches the best estimation error for~$v^*$. Hence, we provide~a new perspective for approximating the Hessian inverse vector product in bilevel optimization. Specifically, the number of Hessian-vector products averages at $(1+{1}/{m})$ per outer iteration with the subspace dimension $m$, which is favorably comparable with the existing methods.

Finally, we offer analysis to circumvent the instability in the process of approximating subspaces, with the result that LancBiO can profit from the benign properties of the Krylov subspace. We prove that the proposed method LancBiO is globally convergent with the convergence rate $\mathcal{O}(\epsilon^{-1})$. In addition, the efficiency of LancBiO is validated by {a synthetic problem and two deep learning tasks}. 

\subsection{Related Work}
A detailed introduction to bilevel optimization methods can be found in~\cref{sec:bilevel}.

\textbf{Krylov subspace methods:}
Subspace techniques have gained significant recognition in the realm of numerical linear algebra~\citep{parlett1998symmetric,saad2011numerical,golub2013matrix} and nonlinear optimization~\citep{yuan2014review,liu2021subspace}. Specifically, numerous optimization methods utilized subspace techniques to improve efficiency, including acceleration technique~\citep{wen2020filter}, diagonal preconditioning~\citep{gao2023diagonal}, and derivative-free optimization methods~\citep{cartis2023scalablesub}. Krylov subspace~\citep{krylov1931numerical}, due to its special structure,
\begin{equation*}
    \mathcal{K}_N(A,b) := \operatorname{span}\left\{b, A b, A^2 b, \ldots, A^{N-1} b\right\}
\end{equation*}
with the dimension~$N$ for~a matrix~$A$ and~a vector~$b$, exhibits advantageous properties in convex quadratic optimization~\citep{nesterov2018lectures}, eigenvalue computation~\citep{kuczynski1992eigenvec}, and regularized nonconvex quadratic problems~\citep{carmon2018krylov}. Krylov subspace has been widely considered in large-scale optimization such as trust region methods~\citep{gould1999tr}, trace maximization problems~\citep{liu2013limited}, and cubic Newton methods~\citep{cartis2011adaptive,jiang2024krylov}. Lanczos process~\citep{lanczos1950lanczos} is an orthogonal projection method onto the Krylov subspace, which reduces a dense symmetric matrix to a tridiagonal form. Details of the Krylov subspace and the Lanczos process are summarized in \cref{sec:krylov_lanczos}.

\textbf{Approximation of the Hessian inverse vector product~:} It~is cumbersome to compute the Hessian inverse vector product in bilevel optimization. To bypass it, several strategies implemented through inner iterations were proposed, e.g., the Neumann series approximation~\citep{ghadimi2018approximation,ji2021stocbio}, gradient descent~\citep{arbel2022amortized,dagreou2022soba}, and conjugate gradient descent~\citep{pedregosa2016hyperparameter,arbel2022amortized,yang2023accelerating}. Alternatively, the previous information was exploited in~\citep{ji2021stocbio,arbel2022amortized} as a warm start for outer iterations; \citet{ramzi2022shine}~suggested approximating the Hessian inverse in the manner of quasi-Newton; \citet{dagreou2022soba} and \citet{li2022fsla} proposed the frameworks without inner iterations to approximate the Hessian inverse vector product.

\section{Subspace-based Algorithms}\label{sec:framework}
In this section, we dive into the development of bilevel optimization algorithms for solving~\eqref{eq:standar_bio}, which dynamically construct subspaces to approximate the Hessian inverse vector product. 

The (hyper-)gradient descent method carries out the $k$-th outer iteration as $ x_{k+1} = x_k-\lambda\nabla \varphi (x_k)$, where the hyper-gradient is exactly computed by $\nabla \varphi (x_k)=\nabla _xf\left(x_k,y^*_k \right) -\nabla^2 _{xy}g\left( x_k,y^*_k \right)v^*_k$ with $y^*_k:=y^*(x_k)$ and $v^*_k:=v^*(x_k)$ defined in~\eqref{eq:def_v_star}. In~view of the computational intricacy of $y^*_k$ and $v^*_k$, it is commonly concerned with the following estimator for the hyper-gradient 
\begin{equation}\label{eq:hypergradient_esti}
    \widetilde{\nabla} \varphi\left(x_k,y_k,v_k\right):=\nabla_x f\left(x_k, y_k\right)-\nabla^2_{xy} g\left(x_k, y_k\right) v_k,
\end{equation}
where $y_k$ is an approximation of $y^*_k$.
Denote $A_k=\nabla_{yy}^2g(x_k,y_k)$ and $b_k=\nabla_yf(x_k,y_k)$. The vector $v_k$~serves as the (approximate) solution of the quadratic optimization problem
\begin{equation}\label{eq:v_quadra}
   \min_{v\in \mathcal{S}_k}\frac{1}{2}v^\top A_kv-v^\top b_k,
\end{equation}
where $\mathcal{S}_k$ is the full space $\mathbb{R}^{d_y}$ and the exact solution is~$A_k^{-1}b^{}_k$. Subsequently, in order to reduce the computational cost, it is natural to ask:
\begin{center}
    \textit{Can we construct a low-dimensional subspace $\mathcal{S}_k$ such that \\the solution of the quadratic problem~\eqref{eq:v_quadra} satisfactorily approximates $A_k^{-1}b^{}_k$?}
\end{center}

General subspace constructions introduced in the existing subspace methods \citep{yuan2014review,liu2021subspace} are not straightforward and not exploited in the bilevel setting, rendering the exploration of appropriate subspaces challenging. In~the following subsections, we construct approximate Krylov subspaces and propose~an elaborate subspace-based framework for bilevel problems.

\subsection{Why Krylov subspace: the SubBiO algorithm}
In light of the Neumann series for a suitable $\eta\in\mathbb{R}$, $A^{-1}b^{}=\eta \sum_{i=0}^{\infty}(I-\eta A)^ib^{}$, it is observed from~\cref{sec:krylov_lanczos} that $A^{-1}b^{}$ belongs to a Krylov subspace for some $N>0$, i.e.,
\begin{equation*}
    A^{-1}b^{}\in\mathcal{K}_{N}(A,b)=\mathcal{K}_{N}(I-\eta A,b).
\end{equation*}
Hence, it is reasonable to consider a Krylov subspace for the construction of $\mathcal{S}_k$.

Given a constant $n\ll N$, we consider an approximation of $A^{-1}b^{}$ in a lower-dimensional Krylov subspace $\mathcal{K}_n(A,b)$, i.e., {$v_n\in \mathcal{K}_n(A,b)=\mathcal{K}_{n}(I-\eta A,b)$} and $v_n=\sum_{i=0}^{n-1}{c_i\left( I-\eta A \right)^ib} \approx A^{-1}b$. Note that the approximation~$v_n$ is composed of the set $\{\left( I-\eta A \right)^ib\}_{i=0}^{n-1}$ in the sense of the Neumann series. Moreover, we observe that $(I-\eta A)v_n\in\mathcal{K}_{n+1}(A,b)$ and hence we can recursively choose
\begin{equation*}
    v_{n+1} \in \mathcal{S}_{n+1}:=\operatorname{span}\left\{b, (I-\eta A)v_n\right\} \subseteq \mathcal{K}_{{n+1}}(A,b)
\end{equation*}
since the subspace $\operatorname{span}\left\{b, (I-\eta A)v_n\right\}$ includes the information of the increased set $\{\left( I-\eta A \right)^ib\}_{i=0}^{n}$. In summary, we can construct a sequence of two-dimensional subspaces $\{\mathcal{S}_{n}\}$ that implicitly filters information from the Krylov subspaces. The rationale for this procedure can be illustrated in~\cref{fig:Sn}.

\begin{wrapfigure}{R}{0.53\textwidth}
\ \ 
 \begin{minipage}{0.47\textwidth} 
\begin{figure}[H]
\vspace{-1.2cm}
    \centering   
    \includegraphics[width=1\linewidth]{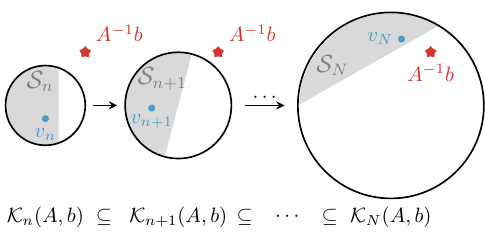}
    \caption{Illustration of approximating $A^{-1}b\in\mathcal{K}_{{N}}(A,b)$ by~$v_n$ in the two-dimensional subspace $\mathcal{S}_n\subseteq\mathcal{K}_{{n}}(A,b)$. \label{fig:Sn}}
\end{figure}
\end{minipage}
\vspace{0.cm}
\end{wrapfigure}

In the context of bilevel optimization, we seek the best solution $v_k$ to the subproblem \eqref{eq:v_quadra} in the subspace
\begin{equation}\label{eq:subbio_S}
    \mathcal{S}_{k}=\operatorname{span}\left\{ b_{{k}},(I-\eta A_{{k}})v_{k-1} \right\}.
\end{equation}
Repeating the procedure is capable of dynamically approximating the Hessian inverse vector product, i.e., $v_k$ approximates $A_k^{-1}b_k$. The Krylov Subspace-aided Bilevel Optimization algorithm (SubBiO) is listed in~\cref{alg:SubBiO}.

\begin{algorithm}[H]
    \caption{SubBiO}
    \label{alg:SubBiO}
    \begin{algorithmic}[1]
        \REQUIRE iteration threshold $K$, step sizes $\theta, \lambda, \eta$, initialization $x_1, y_1, v_{0}$
        \FOR{$k = 1, 2, \dots, K$}
            \STATE $A_k=\nabla _{yy}^{2}g\left( x_k,y_k \right)$, $b_k=\nabla _yf(x_k,y_k)$
            \STATE ${\mathcal{S}_k = \operatorname{span}\{b_k,\left( I-\eta A_k  \right)v_{k-1}\}}$ \label{alg:step1}
            \STATE $v_k =\argmin_{v\in {\mathcal{S}_k}}\frac{1}{2}v^\top A_k v-b_k^\top v$ \label{alg:step2}
            \STATE $x_{k+1} = x_k - \lambda \left(\nabla_xf(x_k,y_k)-\nabla^2_{xy}g(x_k,y_k)v_k\right)$
            \STATE $y_{k+1} = y_k - \theta \nabla_y g(x_{k+1},y_k)$
        \ENDFOR
        \ENSURE $(x_{K+1},y_{K+1})$
    \end{algorithmic}
\end{algorithm}

\subsection{Why~dynamic~Lanczos:~the~LancBiO~framework}\label{sec:lancbio}
Notice that the subproblem in SubBiO (\cref{alg:SubBiO}) can be equivalently reduced to 
\begin{equation*}
    \min_{z\in\mathbb{R}^2}\frac{1}{2}z^\top(S_k^\top A_kS_k)z - b_k^\top S_k z,
\end{equation*}
where $S_k:=[b_k\,\,\left( I-\eta A_k  \right)v_{k-1}]\in\mathbb{R}^{d_y\times 2}$. The solution $z^*\in\mathbb{R}^2$ results in $v_k=S_k z^*$. It is a two-dimensional subproblem, whereas computing the projection $S_k^\top A_kS_k$ requires two Hessian-vector products, which dominate the cost of the subproblem. Therefore, it is crucial to reduce or amortize the projection cost while preserving the advantages of the Krylov subspace. To this end, we find that the Lanczos process~(\cref{sec:krylov_lanczos}) provides an enlightening way~\citep{lanczos1950lanczos,saad2011numerical,golub2013matrix} since it allows for the construction of a Krylov subspace and maintaining a tridiagonal matrix as the projection matrix, which significantly reduces the computational cost.

In bilevel optimization, since the quadratic problem~\eqref{eq:v_quadra} evolves through outer iterations, it is difficult to leverage the Lanczos process to amortize the projection cost while updating variables as in SubBiO. Specifically, the Lanczos process is inherently unstable \citep{paige1980accuracy}, and thus the accumulative difference among $\{A_k\}$ and $\{b_k\}$ will make the Lanczos process invalid. 

In order to address the above difficulties, we propose a restart mechanism to guarantee the benign behavior of approximating the Krylov subspace and consider solving residual systems to employ the historical information. In summary, we propose a dynamic Lanczos-aided Bilevel Optimization framework, LancBiO, which is listed in~\cref{alg:LancBiO}. The only difference between LancBiO and SubBiO is solving the subproblem (line~3-4 in~\cref{alg:SubBiO}).
\begin{wrapfigure}{R}{0.6\textwidth}
\vspace{-0.7cm}
\begin{minipage}{0.6\textwidth} 
\begin{algorithm}[H]
    \caption{LancBiO}
    \label{alg:LancBiO}
    \begin{algorithmic}[1]
        \REQUIRE iteration threshold $K$, step sizes $\theta, \lambda$, initialization $x_1, y_1, v_1$, initial correction $\Delta v_{0}=0$, subspace dimension $m$, initial epoch $h=-1$
        \FOR{$k = 1, 2, \dots, K$}
            \STATE $A_k=\nabla^2_{yy}g(x_k,y_k),\ b_k=\nabla _yf(x_k,y_k)$
            \IF{$(k\ \texttt{mod}\ m)=1$}
                \STATE $h=h+1$
                \STATE $\bar{v}_h=v_k$
                \STATE $w_h=A_k\bar{v}_h$
                \STATE $Q_{k-1}=(b_k-w_h)/\norm{b_k-w_h}$
                \STATE $T_{k-1}=\texttt{Empty Matrix}$
                \STATE $\beta_k=0$
            \ENDIF
            \STATE
                    $
                (T_{k},Q_k,\beta_{k+1})\!=\!\texttt{DLanczos}(T_{k-1},Q_{k-1},A_k,\beta_k)
                    $
            \STATE $r_k=b_k-w_h$
            \STATE $\Delta v_k=Q _k(T_k)^{-1}Q_k^\top r_k$
            \STATE $v_k = \bar{v}_h + \Delta v_k$
            \STATE $x_{k+1} = x_k - \lambda \left(\nabla_xf(x_k,y_k)-\nabla^2_{xy}g(x_k,y_k)v_k\right)$
            \STATE $y_{k+1} = y_k - \theta \nabla_y g(x_{k+1},y_k)$
        \ENDFOR
        \ENSURE $(x_{K+1},\ y_{K+1})$
    \end{algorithmic}
\end{algorithm}
\end{minipage}
\vspace{-6cm}
\end{wrapfigure}

If we adapt the standard Lanczos process for tridiagonalizing~$A_k$, by starting from $q_1=b_1/\norm{b_1},\ q_0={\bf 0},\ \beta_1 = 0$ and using the dynamic matrices~$A_j$ for $j=1,2,\ldots,k$, the process is as follows,
\begin{align*}
    u_j&=A_jq_j-\beta _jq_{j-1}, 
    \\
    \alpha _j&=q_{j}^{\top}u_j,    
    \\
    \omega _j&=u_j-\alpha _jq_j,
    \\
    \beta _{j+1}&=\left\| \omega _j \right\| ,
    \\
    q_{j+1}&=\omega _j/\beta _{j+1},
\end{align*}
and~$T_k$ is~a tridiagonal matrix recursively computed from

\begin{minipage}{0.35\textwidth}
\vspace{-0.4cm}
\begin{center}
\begin{figure}[H]
    \includegraphics[width=1\linewidth]{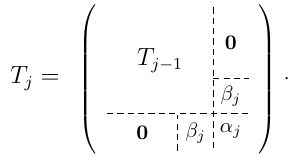}
\end{figure}
\end{center}
\vspace{-0.4cm}
\end{minipage}   

\textbf{Restart mechanism:} In contrast to SubBiO, we construct the subspace
\begin{equation*}
 \mathcal{S}_k=\operatorname{span}(Q_k)=\operatorname{span}([q_1,\ldots,q_k]).   
\end{equation*}
Consequently, $Q_k$~approximates the basis of the true Krylov subspace $\mathcal{K}_k(A_k,b_k)$, and~$T_k$ approximates the projection of~$A_k$ onto it, i.e., $T_k\approx Q_k^\top A_k Q_k$. However, $Q_k$ will lose the orthogonality due to the evolution of~$A_j$, and $T_k$ will deviate from the true projection. Based on this observation, we restart the subspace spanned by the matrix $Q$ for every $m$ outer iterations (line~3 to line~10 in~\cref{alg:LancBiO}). The restart mechanism allows us to mitigate the accumulation of the difference among $\{A_1,\dots, A_k\}$, and hence, we can maintain a more reliable basis matrix to approximate Krylov subspaces. The above dynamic Lanczos subroutine, \texttt{DLanczos}, is summarized in~\cref{sec:dLanczosSub}.

\textbf{Residual minimization:} The preceding discussion reveals that the dimension of the subspace $\mathcal{S}_k$ should be moderate to retain the reliability of the dynamic process. However, the limited dimension stagnates the approximation of the subproblem~\eqref{eq:v_quadra} to the full space problem. Therefore, instead of directly solving the quadratic subproblem \eqref{eq:v_quadra} in the subspace, we intend to find $v\in\mathcal{S}_k=\operatorname{span}(Q_k)$ such that $A_kv\approx b_k$. To this end, after going through~$m$ outer iterations, we denote the current approximation by $\bar{v}$. In the subsequent outer iterations, we concentrate on a linear system with a residual in the form
\begin{equation}\label{eq:residual}
    A_k\Delta v = b_k-A_k\bar{v},\quad \quad \Delta v\in\mathcal{S}_k.
\end{equation}
Specifically, we inexactly solve the minimal residual subproblem
\begin{equation}\label{eq:minimal_res}
    \min_{\Delta v\in\mathcal{S}_k} \norm{(b_k-A_k\bar{v})-A_k\Delta v}^2
\end{equation}
 and use the solution $\Delta v_k$ as~a correction to $\bar{v}$ (line~12 to line~14 in~\cref{alg:LancBiO}), $v_k = \bar{v} + \Delta v_k.$

Consequently, taking into account the two strategies above, we illuminate the framework LancBiO in~\cref{fig:LancBiOworkflow}. It is structured into epochs, with each epoch built by $m$~outer iterations. Notably, each epoch restarts by incrementally constructing from~a~one-dimensional space $Q_1$ to an~{$m$-dimensional} space $Q_m$, aiming to approximate the solution to the residual system within these subspaces. The solution~$\Delta v_j$  to the subproblem serves as a correction to enhance the hyper-gradient estimation, which facilitates the $(x, y)$ updating.

\begin{wrapfigure}{R}{0.49\textwidth}
\vspace{-1.cm}
 \begin{minipage}{0.4\textwidth} 
\begin{figure}[H]
    \centering       
    \,\,\,\includegraphics[scale=0.9]{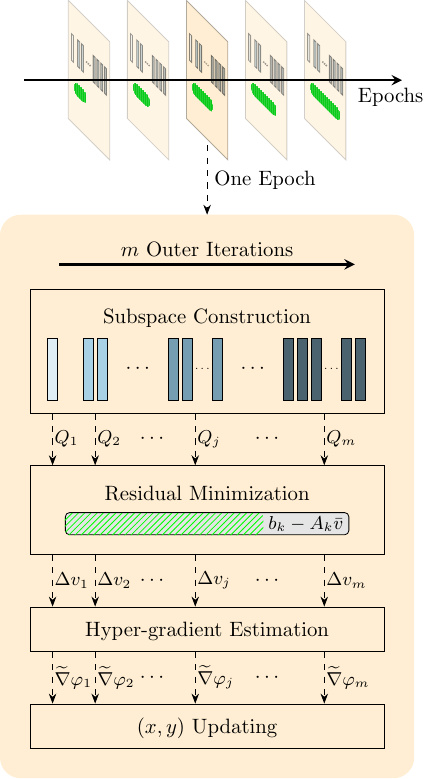}
    \captionsetup{margin={0.5cm,-0.5cm}}
    \caption{An overview of LancBiO.}
    \label{fig:LancBiOworkflow}
\end{figure}
\end{minipage}
\vspace{-2.cm}
\end{wrapfigure}

The combination of the two strategies, restart mechanism and residual minimization, not only controls the dimension of the subspace but also utilizes historical information to enhance the approximation accuracy. By considering a simplified scenario, we reduce the two strategies into solving a standard linear system problem $Ax=b$ with $A$ and $b$ fixed. Note that, from the perspective of Theorem~1 in \citep{carmon2018krylov}, the residual associated with solving a linear system in an $m$-dimensional Krylov subspace decays faster than a rate $\mathcal{O}\kh{{1}/{m^2}}$ after each restart. In other words, the estimation error of the Hessian inverse vector product experiences a decay rate of $\mathcal{O}\kh{{1}/{m^2}}$ after every restart, i.e.,
\begin{small}
\begin{equation*}
\begin{aligned}
     \frac{\left\| b-Av_{m\left( h+1 \right)} \right\|^2}{\left\| b-Av_{mh} \right\|^2}&=\frac{\left\| \left( b-A\bar{v}_h \right) -A\Delta _{m\left( h+1 \right)} \right\|^2}{\left\| b-A\bar{v}_h \right\|^2}
     \\
     &= \mathcal{O} \left( \frac{1}{m^2} \right).   
\end{aligned}
\end{equation*}    
\end{small}

\begin{remark}
The classic Lanczos process is known for its capability to solve indefinite linear systems~\citep{greenbaum1999lanczosindefinite}. In a similar fashion, the LancBiO framework can be adapted to the bilevel problems with a nonconvex lower-level problem. Interested readers are referred to~\cref{app:LLPL} for details.
\end{remark}

\subsection{Relation~to existing algorithms}\label{sec:relation}
The proposed SubBiO and LancBiO have intrinsic connections to the existing algorithms. Generally, in each outer iteration, methods such as BSA~\citep{ghadimi2018approximation} and~TTSA \citep{hong2023ttsa} truncate the Neumann series at~$N$, exploiting information from~an $N$-dimensional Krylov subspace. In contrast, both SubBiO and LancBiO implicitly gather knowledge from a higher-dimensional Krylov subspace with less effort.

SubBiO shares similarities with SOBA~\citep{dagreou2022soba} and FSLA~\citep{li2022fsla}. The update rule for the estimator~$v$ of the Hessian inverse vector product in SOBA and FSLA is
\begin{equation*}
    \begin{aligned}
        v_{k}&=v_{k-1}-\eta \left( A_{k}v_{k-1}-b_{k} \right)=(I-\eta A_k)v_{k-1} + \eta b_k,
    \end{aligned}
\end{equation*}
while the proposed SubBiO constructs~a two-dimensional subspace, $\mathcal{S}_{k}=\operatorname{span}\left\{ b_{{k}},(I-\eta A_{{k}})v_{k-1} \right\}$ defined in~\eqref{eq:subbio_S}. It~is worth noting that the updated~$v_k$ in SOBA and FSLA belongs to the subspace $\mathcal{S}_{k}$. Furthermore, in the sense of solving the two-dimensional subproblem~\eqref{eq:v_quadra}, SubBiO selects the optimal solution~$v$ in the subspace. 

In addition, if the subspace dimension~$m$ is set to one, LancBiO is simplified to~a scenario in which one conjugate gradient (CG) step with the warm start mechanism is performed in each outer iteration, which exactly recovers the algorithm AmIGO-CG~\citep{arbel2022amortized} with one inner iteration to the update of~$v$. Alternatively, if the step size~$\lambda$ in~\cref{alg:LancBiO} is set to~$0$ within each $m$-steps (i.e., only inner iterations are invoked), LancBiO reduces to the algorithm AmIGO-CG~\citep{arbel2022amortized} with~$m$ inner iterations.

\section{Theoretical Analysis}\label{sec:theory}
In this section, we provide~a non-asymptotic convergence analysis for LancBiO. Firstly, we introduce some appropriate assumptions. Subsequently, to address the principal theoretical challenges, we analyze the properties and dynamics of the subspaces constructed in~\cref{sec:framework}. Finally, we prove the global convergence of LancBiO and give the iteration complexity; the detailed proofs are provided in the appendices.

\begin{assumption}\label{assu:f}
    The upper-level function $f$ is twice continuously differentiable. The gradients $\nabla_x f(x,y)$ and $\nabla_y f(x,y)$ are $L_{fx}$-Lipschitz and $L_{fy}$-Lipschitz, and $\norm{\nabla_y f\kh{x,y^*(x)}}\leq C_{fy}$.
\end{assumption}
\begin{assumption}\label{assu:g}
    The lower-level function $g$ is twice continuously differentiable. $\nabla_x g(x,y)$ and $\nabla_yg(x,y)$ are $L_{gx}$-Lipschitz and $L_{gy}$-Lipschitz. The derivative $\nabla^2_{xy}g(x,y)$ and the Hessian matrix $\nabla^2_{yy}g(x,y)$ are $L_{gxy}$-Lipschitz and $L_{gyy}$-Lipschitz.
\end{assumption}
\begin{assumption}\label{assu:strongg}
    For any $x\in\mathbb{R}^{d_x}$, the lower-level function $g(x,\cdot)$ is $\mu_g$-strongly convex.
\end{assumption}
The Lipschitz properties of~$f,g$ and the strong convexity of the lower-level problem revealed by the above assumptions are standard in bilevel optimization \citep{ghadimi2018approximation,chen2021closing,ji2021stocbio,khanduri2021sustain,arbel2022amortized,chen2022stable,dagreou2022soba,li2022fsla,ji2022will,hong2023ttsa}. These assumptions ensure the smoothness of~$\varphi$ and~$y^*$; see the following results~\citep{ghadimi2018approximation}.
 \begin{lemma}\label{lem:ysmooth}
    Under the Assumptions \ref{assu:g} and \ref{assu:strongg}, $y^*(x)$ is ${L_{gx}}/{\mu_g}$ -Lipschitz continuous, i.e., for any $x_1,x_2\in\mathbb{R}^{d_x}$, $\norm{y^*(x_1)-y^*(x_2)}\le \frac{L_{gx}}{\mu_g}\norm{x_1-x_2}$.
\end{lemma}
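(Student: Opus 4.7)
The plan is to combine the first-order optimality condition for $y^*(x)$ with the $\mu_g$-strong convexity of $g(x,\cdot)$, and then relate the cross-term to $\norm{x_1-x_2}$ via the Lipschitzness of $\nabla_x g$ from Assumption~\ref{assu:g}. This is the standard implicit-function-style argument for the Lipschitz continuity of an argmin map, so no new machinery beyond Assumptions~\ref{assu:g} and~\ref{assu:strongg} should be needed.

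Concretely, I would first record the stationarity identity $\nabla_y g(x_i, y^*(x_i)) = 0$ for $i=1,2$, which holds because $y^*(x_i)$ minimizes the strongly convex function $g(x_i,\cdot)$. Next, I would apply the $\mu_g$-strong monotonicity of $\nabla_y g(x_2,\cdot)$ to obtain
\[
\mu_g\norm{y^*(x_1)-y^*(x_2)}^2 \le \innerp{\nabla_y g(x_2,y^*(x_1))-\nabla_y g(x_2,y^*(x_2)),\,y^*(x_1)-y^*(x_2)}.
\]
Substituting $\nabla_y g(x_2,y^*(x_2))=0=\nabla_y g(x_1,y^*(x_1))$ turns the right-hand side into $\innerp{\nabla_y g(x_2,y^*(x_1))-\nabla_y g(x_1,y^*(x_1)),\,y^*(x_1)-y^*(x_2)}$, and Cauchy--Schwarz peels off one factor of $\norm{y^*(x_1)-y^*(x_2)}$.

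The final step is to show $\norm{\nabla_y g(x_1,y)-\nabla_y g(x_2,y)} \le L_{gx}\norm{x_1-x_2}$ for any fixed $y$. This is the only step with a minor subtlety: Assumption~\ref{assu:g} directly gives the Lipschitzness of $\nabla_x g$, not of $\nabla_y g$ in $x$. I would bridge this by noting that $L_{gx}$-Lipschitzness of $\nabla_x g$ in $(x,y)$ implies $\norm{\nabla^2_{yx} g}\le L_{gx}$, and then by symmetry of second partials for $g\in C^2$ we get $\norm{\nabla^2_{xy} g}\le L_{gx}$; integrating along the segment from $x_2$ to $x_1$ yields the desired estimate. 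Combining everything, $\mu_g\norm{y^*(x_1)-y^*(x_2)} \le L_{gx}\norm{x_1-x_2}$, which is the claim. The only real obstacle is the symmetry-of-mixed-partials bridge; everything else is a direct strong-convexity calculation.
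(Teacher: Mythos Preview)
Your argument is correct and complete. The paper, however, takes a different route: it invokes the implicit function theorem to write the Jacobian explicitly as $\nabla y^*(x) = \nabla^2_{xy} g(x,y^*(x))\,[\nabla^2_{yy} g(x,y^*(x))]^{-1}$ and then bounds $\norm{\nabla y^*(x)}\le \norm{\nabla^2_{xy} g}\,\norm{(\nabla^2_{yy} g)^{-1}}\le L_{gx}/\mu_g$, from which Lipschitz continuity follows. Both proofs rely on the same symmetry-of-mixed-partials step to extract $\norm{\nabla^2_{xy} g}\le L_{gx}$ from the $L_{gx}$-Lipschitzness of $\nabla_x g$; the paper simply asserts this without comment, whereas you spell it out. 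Your strong-monotonicity argument is slightly more elementary in that it never needs the differentiability of $y^*(\cdot)$ itself, while the paper's approach has the advantage of producing the explicit Jacobian formula, which is exactly the object appearing in the hyper-gradient~\eqref{eq:hypergradient} and used repeatedly elsewhere.
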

\begin{lemma}\label{lem:phismooth}
    Under the Assumptions \ref{assu:f}, \ref{assu:g} and \ref{assu:strongg}, the hyper-gradient $\nabla\varphi(\cdot)$ is $L_{\varphi}$-Lipschitz continuous, i.e., for any $x_1,x_2\in\mathbb{R}^{d_x}$, $\norm{\nabla\varphi(x_1)-\nabla\varphi(x_2)}\leq L_\varphi\norm{x_1-x_2}$, where $L_\varphi>0$ is defined in~\cref{sec:app_smooth_phi}.
\end{lemma}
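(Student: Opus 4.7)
The plan is to bound $\norm{\nabla\varphi(x_1) - \nabla\varphi(x_2)}$ by controlling each of the three building blocks of the hyper-gradient formula \eqref{eq:hypergradient} along the curve $x \mapsto (x, y^*(x))$, and then combining the bounds via the triangle inequality. Throughout, the single ingredient that upgrades a joint-Lipschitz estimate in $(x,y)$ to a Lipschitz estimate in $x$ alone is \cref{lem:ysmooth}, which gives $\norm{y^*(x_1)-y^*(x_2)}\le (L_{gx}/\mu_g)\norm{x_1-x_2}$.

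First, I would handle the easy term $\nabla_x f(x, y^*(x))$: the $L_{fx}$-Lipschitz property of $\nabla_x f$ in $(x,y)$ combined with \cref{lem:ysmooth} yields a bound of the form $L_{fx}(1 + L_{gx}/\mu_g)\norm{x_1-x_2}$. Next, I would show that $v^*(x)$ itself is Lipschitz. Using \cref{assu:strongg}, the operator norm $\norm{[\nabla^2_{yy}g(x,y^*(x))]^{-1}} \le 1/\mu_g$, and combined with $\norm{\nabla_y f(x,y^*(x))} \le C_{fy}$ this immediately gives the uniform bound $\norm{v^*(x)} \le C_{fy}/\mu_g$. For the difference, I would use the standard identity
\begin{equation*}
A_1^{-1}b_1 - A_2^{-1}b_2 = A_1^{-1}(b_1-b_2) + A_1^{-1}(A_2-A_1)A_2^{-1}b_2,
\end{equation*}
with $A_i = \nabla^2_{yy}g(x_i,y^*(x_i))$ and $b_i = \nabla_y f(x_i,y^*(x_i))$, then apply the $L_{fy}$, $L_{gyy}$ Lipschitz bounds together with \cref{lem:ysmooth} to each factor, yielding a constant $L_{v^*}$ (explicit in $\mu_g, L_{fy}, L_{gyy}, L_{gx}, C_{fy}$) such that $\norm{v^*(x_1)-v^*(x_2)} \le L_{v^*}\norm{x_1-x_2}$.

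Finally, I would handle the product term $\nabla^2_{xy}g(x,y^*(x))\,v^*(x)$ by the split
\begin{equation*}
\nabla^2_{xy}g_1 v^*(x_1) - \nabla^2_{xy}g_2 v^*(x_2) = (\nabla^2_{xy}g_1 - \nabla^2_{xy}g_2)\,v^*(x_1) + \nabla^2_{xy}g_2\,(v^*(x_1)-v^*(x_2)),
\end{equation*}
where $g_i$ denotes $g(x_i,y^*(x_i))$. The first summand is bounded using $L_{gxy}$-Lipschitzness of $\nabla^2_{xy}g$, \cref{lem:ysmooth}, and the uniform bound on $\norm{v^*(x_1)}$; the second summand uses the uniform bound $\norm{\nabla^2_{xy}g} \le L_{gx}$ (which follows from $L_{gx}$-Lipschitzness of $\nabla_x g$) together with the just-established Lipschitz constant of $v^*$. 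Summing the three contributions produces an explicit $L_\varphi$ depending polynomially on the constants $L_{fx}, L_{fy}, L_{gx}, L_{gxy}, L_{gyy}, C_{fy}, \mu_g$.

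The only step that requires care is the Lipschitz estimate for $v^*(x)$, because it mixes the Lipschitzness of the Hessian (via $L_{gyy}$), the strong-convexity-based bound $1/\mu_g$ on the inverse, and the Lipschitzness of $\nabla_y f$, all composed through $y^*$; everything else is a routine triangle-inequality bookkeeping exercise. I would defer the explicit closed form of $L_\varphi$ to an appendix (the excerpt signals exactly this with the pointer to \cref{sec:app_smooth_phi}) and present only the structural decomposition in the main proof.
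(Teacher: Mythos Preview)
Your proposal is correct and follows essentially the same approach as the paper's proof: the paper also uses the identity $A_1^{-1}b_1-A_2^{-1}b_2=A_1^{-1}(b_1-b_2)+A_1^{-1}(A_2-A_1)A_2^{-1}b_2$ to bound the Lipschitz constant of $v^*$, then decomposes $\nabla\varphi(x_1)-\nabla\varphi(x_2)$ into the $\nabla_x f$ term and the two pieces of the product $\nabla^2_{xy}g\cdot v^*$ exactly as you do (with only a cosmetic swap of which index is held fixed in the add-and-subtract). The explicit constant the paper obtains is $L_\varphi=(1+L_{gx}/\mu_g)\bigl(L_{fx}+(L_{gx}L_{fy}+L_{gxy}C_{fy})/\mu_g+L_{gx}C_{fy}L_{gyy}/\mu_g^2\bigr)$, matching the dependency structure you describe.
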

\begin{assumption}\label{assu:boundfx}
    There exists a constant $C_{fx}$ so that $\norm{\nabla_x f(x,y)}\leq C_{fx}$.
\end{assumption}
\cref{assu:boundfx}, commonly adopted in~\citep{ghadimi2018approximation,ji2021stocbio,liu2022bome,kwon2023f2sa}, is helpful in ensuring the stable behavior of the dynamic Lanczos process; see~\cref{sec:subspaceerror}.

\subsection{Subspace Properties in Dynamic Lanczos Process}\label{sec:subspaceerror}
In view of the inherent instability of the Lanczos process \citep{paige1980accuracy,meurant2006lanczos} and the evolution of the Hessian $\{A_k\}$ and the gradient~$\{b_k\}$ in~LancBiO, the analysis of the constructed subspaces is intricate. Based on the existing work~\citep{paige1976error,paige1980accuracy,greenbaum1997iterative}, this subsection sheds light on the analysis of the subspaces and the effectiveness of the subproblem in approximating the full space problem in LancBiO.

An \emph{epoch} is constituted of~a complete $m$-step dynamic Lanczos process between two restarts, namely, after~$h$ epochs, the number of outer iterations is~$mh$. Given the outer iterations $k=mh+j$ for $j=1,2,\ldots,m$, we denote 
\begin{equation*}
    \varepsilon _{st}^{h}:=\left( 1+\frac{L_{gx}}{\mu _g} \right) \left\| x_{mh+s}-x_{mh+t} \right\| +\left\| y_{mh+s}-y_{mh+s}^{*} \right\|
\end{equation*}
for $s,t=1,2,\dots,m$, and $\varepsilon _{j}^{h}:=\max_{1\le s,t\le j} \varepsilon _{st}^{h}$, serving as the accumulative difference. For brevity, we omit the superscript where there is no ambiguity, and we are slightly abusing of notation that at the current epoch, $\{A_{mh+j}\}$ and $\{b_{mh+j}\}$ are simplified by $\{A_{j}\}$ and $\{b_{j}\}$ for~$j=1,\dots,m$. In addition, the approximations in the residual system~\eqref{eq:residual} are simplified by $\bar{v}$ and $\bar{b}:=b_1-A_1\bar{v}$.

The following proposition demonstrates that the dynamic subspace constructed in~\cref{alg:LancBiO} within an epoch is indeed an approximate Krylov subspace. 

\begin{proposition}
    At the $j$-th step within an epoch ($j=1,2,\ldots,m-1$), the subspace spanned by the matrix $Q_{j+1}$ in~\cref{alg:LancBiO} satisfies
    \begin{equation*}
        \mathrm{span}(Q_{j+1})\subseteq\mathrm{span}\left\{ A_{1}^{a_1}A_{2}^{a_2}\cdots A_{j}^{a_j}\bar{b}\,\ \big |\ a_s=0\ \text{\emph{or}}\ 1,\ s=1,2,\ldots,j\right\} .
    \end{equation*}
    Specifically, when $A_1=A_2=\cdots=A_j=A$ and $Q_{j+1}$ is of full rank, $\mathrm{span}(Q_{j+1}) = \mathcal{K}_{j+1}\kh{A,\bar{b}}$.
\end{proposition}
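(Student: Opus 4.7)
The plan is to prove both claims by a straightforward induction on the step index $j$ within a fixed epoch, using directly the dynamic Lanczos recursion
\begin{equation*}
q_{j+1} = \tfrac{1}{\beta_{j+1}}\bigl(A_j q_j - \alpha_j q_j - \beta_j q_{j-1}\bigr),
\end{equation*}
with the convention $q_0 = 0$ and starting vector $q_1 = \bar{b}/\|\bar{b}\|$ (the normalization of $r_k = b_k - A_1\bar{v}$ at the start of the epoch). Let me write
\begin{equation*}
\mathcal{M}_j := \mathrm{span}\bigl\{ A_1^{a_1}A_2^{a_2}\cdots A_j^{a_j}\bar{b} \;\big|\; a_s\in\{0,1\}\bigr\},
\end{equation*}
with the convention $\mathcal{M}_0 = \mathrm{span}\{\bar{b}\}$. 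Note that $\mathcal{M}_{j-1}\subseteq \mathcal{M}_j$ and that $A_j \mathcal{M}_{j-1}\subseteq \mathcal{M}_j$, both of which follow immediately from the definition.

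For the base case $j=1$, the recursion gives $q_2\in\mathrm{span}\{q_1, A_1 q_1\}\subseteq\mathrm{span}\{\bar{b}, A_1\bar{b}\}=\mathcal{M}_1$, so $\mathrm{span}(Q_2)\subseteq\mathcal{M}_1$. For the inductive step, assuming $q_s\in\mathcal{M}_{s-1}$ for all $s\le j$, the Lanczos recursion expresses $q_{j+1}$ as a linear combination of $q_{j-1}$, $q_j$, and $A_j q_j$. The first two terms lie in $\mathcal{M}_{j-1}\subseteq\mathcal{M}_j$ by induction and by the containment $\mathcal{M}_{j-1}\subseteq\mathcal{M}_j$, while $A_j q_j\in A_j\mathcal{M}_{j-1}\subseteq\mathcal{M}_j$. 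Hence $q_{j+1}\in\mathcal{M}_j$, and since the columns of $Q_{j+1}$ are exactly $q_1,\ldots,q_{j+1}$, all of which now lie in $\mathcal{M}_j$, the claimed containment $\mathrm{span}(Q_{j+1})\subseteq\mathcal{M}_j$ follows.

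For the equality statement under $A_1=\cdots=A_j=A$, the set of distinct monomials in $\mathcal{M}_j$ collapses to $\{A^i \bar{b} : 0\le i\le j\}$, so $\mathcal{M}_j = \mathcal{K}_{j+1}(A,\bar{b})$, which has dimension at most $j+1$. The inclusion $\mathrm{span}(Q_{j+1})\subseteq\mathcal{K}_{j+1}(A,\bar{b})$ is already proved, and the full-rank assumption on $Q_{j+1}$ makes $\dim\mathrm{span}(Q_{j+1})=j+1$, forcing equality by a dimension count.

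I do not anticipate a genuine obstacle here: the proposition is a purely algebraic statement about which subspace the Lanczos iterates can possibly live in, and the dynamics of $A_j$ enter only through the innocuous inclusion $A_j\mathcal{M}_{j-1}\subseteq\mathcal{M}_j$. The only mild subtlety is keeping the bookkeeping correct when $\beta_{j+1}$ is small but nonzero (so the recursion is well defined); no quantitative error analysis or orthogonality argument is needed at this stage, since those considerations belong to the separate error bounds in Section~\ref{sec:subspaceerror}, and the full-rank hypothesis in the equality statement already sidesteps the degenerate breakdown case.
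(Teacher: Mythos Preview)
Your proposal is correct and follows essentially the same inductive argument as the paper: both use the dynamic Lanczos recursion $q_{j+1}\in\mathrm{span}\{q_{j-1},q_j,A_jq_j\}$ together with the nesting $\mathcal{M}_{j-1}\subseteq\mathcal{M}_j$ and $A_j\mathcal{M}_{j-1}\subseteq\mathcal{M}_j$. Your treatment is in fact slightly more complete, since you spell out the dimension-count argument for the equality $\mathrm{span}(Q_{j+1})=\mathcal{K}_{j+1}(A,\bar{b})$ under the full-rank hypothesis, whereas the paper's proof leaves that part implicit.
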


Denote $A^*=\nabla_{yy}^2g(x,y^*)$ and $b^*=\nabla_yf(x,y^*)$. Notice that the dynamic Lanczos process in~\cref{alg:LancBiO} centers on $A_j$ instead of $A^*_j$. The subsequent lemma interprets the perturbation analysis for the dynamic Lanczos process in terms of $A^*_j$, which satisfies an approximate {three-term} recurrence with a perturbation term $\delta Q$.
\begin{lemma}
    Suppose Assumptions \ref{assu:f} to \ref{assu:strongg} hold. The dynamic Lanczos process in~\cref{alg:LancBiO} with normalized $q_1$ satisfies
    \begin{equation*}
        A_{j}^{*}Q_j=Q_jT_j+\beta _{j+1}q_{j+1}e_{j}^{\top}+\delta Q_j,\ \  \text{for}\ j=1,2,\dots,m, 
    \end{equation*}
    where $Q_j=\left[ q_1,q_2,\dots ,q_j \right]$, $\delta Q_j=\left[ \delta q_1,\delta q_2,\dots ,\delta q_j \right]$ with $\norm{\delta q_j} \le L_{gyy}\varepsilon_{j}$, and $T_j$ is~a~{$j\times j$} symmetric tridiagonal matrix with diagonal elements $\hkh{\alpha_1,\ldots,\alpha_j}$ and subdiagonal elements~$\hkh{\beta_2,\ldots,\beta_j}$.
\end{lemma}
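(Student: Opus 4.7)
First, I would directly read off an exact three-term recurrence from the \texttt{DLanczos} subroutine. At inner step $i$ of the current epoch, the updates $u_i = A_iq_i - \beta_iq_{i-1}$, $\alpha_i = q_i^\top u_i$, $\omega_i = u_i - \alpha_iq_i$, $\beta_{i+1} = \norm{\omega_i}$, $q_{i+1} = \omega_i/\beta_{i+1}$ rearrange to $A_iq_i = \beta_iq_{i-1} + \alpha_iq_i + \beta_{i+1}q_{i+1}$ (with the convention $\beta_1 q_0 := 0$, which matches the algorithm's initialization $\beta_1 = 0$). This is structurally the same recurrence as in standard symmetric Lanczos, except that the matrix $A_i$ changes with the inner step $i$. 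Since the lemma puts the single anchor matrix $A_j^*$ on the left-hand side, the plan is to rewrite $A_j^*q_i = A_iq_i + (A_j^* - A_i)q_i$ and let $\delta q_i := (A_j^* - A_i)q_i$ absorb all the mismatch between the anchor and the per-step matrix actually used.

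Next, I would stack the $j$ column recurrences into a single matrix identity. Collecting $A_j^*q_i = \beta_iq_{i-1} + \alpha_iq_i + \beta_{i+1}q_{i+1} + \delta q_i$ for $i=1,\dots,j$ automatically yields $A_j^*Q_j = Q_jT_j + \beta_{j+1}q_{j+1}e_j^\top + \delta Q_j$, where $T_j$ is the tridiagonal matrix with diagonal $(\alpha_1,\dots,\alpha_j)$ and sub-/super-diagonals $(\beta_2,\dots,\beta_j)$; the term $\beta_{j+1}q_{j+1}e_j^\top$ picks up the unique contribution from column $j$ that falls outside $\mathrm{span}(Q_j)$. Symmetry of $T_j$ is built into the construction because the coefficient of $q_{i-1}$ in the $i$-th recurrence coincides with the coefficient of $q_i$ in the $(i-1)$-th recurrence, both equal to $\beta_i$.

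Then I would bound each perturbation column by splitting $A_j^* - A_i = (A_j^* - A_i^*) + (A_i^* - A_i)$ and invoking the Lipschitz estimate on $\nabla^2_{yy}g$ from \cref{assu:g}. The first piece is controlled by $L_{gyy}\bigl(\norm{x_{mh+j} - x_{mh+i}} + \norm{y^*(x_{mh+j}) - y^*(x_{mh+i})}\bigr)$, which via \cref{lem:ysmooth} is at most $L_{gyy}(1 + L_{gx}/\mu_g)\norm{x_{mh+j} - x_{mh+i}}$; the second piece is at most $L_{gyy}\norm{y_{mh+i} - y^*(x_{mh+i})}$. Since the DLanczos normalization guarantees $\norm{q_i} = 1$, summing gives $\norm{\delta q_i} \le L_{gyy}\,\varepsilon_{ij}^h \le L_{gyy}\,\varepsilon_j^h$, matching the claimed bound.

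The main obstacle, in my view, is the careful bookkeeping in this last step: identifying that the perturbation naturally splits into two physically distinct sources---the drift of the true Hessian $A^*$ between outer iterations $i$ and $j$, and the inner-loop inexactness $y_{mh+i}\neq y^*(x_{mh+i})$ at iteration $i$---and recognizing that the definition of $\varepsilon_{st}^h$ has been tailored to accommodate exactly this decomposition, with the factor $1 + L_{gx}/\mu_g$ precisely encoding the $y^*$-Lipschitz constant supplied by \cref{lem:ysmooth}. Once this correspondence is in hand, the remaining verifications (unit norms of the $q_i$, the vanishing boundary term, and symmetry of $T_j$) are routine.
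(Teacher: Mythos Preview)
Your proposal is correct and follows essentially the same approach as the paper: read off the exact per-step recurrence $A_iq_i=\beta_iq_{i-1}+\alpha_iq_i+\beta_{i+1}q_{i+1}$, define $\delta q_i=(A_j^*-A_i)q_i$, stack into the matrix identity, and bound $\norm{\delta q_i}$ via the Lipschitz continuity of $\nabla^2_{yy}g$ together with \cref{lem:ysmooth}. Your decomposition $A_j^*-A_i=(A_j^*-A_i^*)+(A_i^*-A_i)$ makes explicit the computation that the paper compresses into the single phrase ``due to Assumptions~\ref{assu:g} and~\ref{assu:strongg}'', and your observation that $\varepsilon_{st}^h$ is tailored to this two-piece split is exactly the right reading.
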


\subsection{Convergence Analysis}\label{sec:convergence}
To guarantee the stable behavior of the dynamic process, we need the subsequent assumption.
\begin{assumption}\label{assu:y_initial}
    The initialization of $y_1$ in~\cref{alg:LancBiO} satisfies$\left\| y_1-y_{1}^{*} \right\| \le \frac{\sqrt{3}\mu _g}{8\left( m+1 \right) ^3L_{gyy}}$.
\end{assumption}
Similar initialization refinement is used in \citep{hao2023bilevel}, which can be achieved by implementing several gradient descent steps for the smooth and strongly convex lower-level problem. The following lemma reveals that the dynamic process yields an improved solution for the subproblem~\eqref{eq:minimal_res}.
\begin{lemma}\label{lem:descent_res}
 Suppose Assumptions \ref{assu:f}, \ref{assu:g}, \ref{assu:strongg}, \ref{assu:boundfx} and \ref{assu:y_initial} hold. Within each epoch, if we set the step size $\theta\sim\mathcal{O}({1}/{m})$ a constant for $y$, and the step size for $x$ as zero in the first $m_0\sim\Omega(1)$ steps and the others as an appropriate constant $\lambda\sim\mathcal{O}({1}/{m^4})$, we have the following inequality,
\begin{equation*}
\frac{\left\| \bar{r}_j \right\|}{\left\| \bar{r}_0 \right\|}\le 2\sqrt{\tilde{\kappa}\left( j \right)}\left( \frac{\sqrt{\tilde{\kappa}\left( j \right)}-1}{\sqrt{\tilde{\kappa}\left( j \right)}+1} \right) ^j+\sqrt{j}L_{gyy}\varepsilon _j\tilde{\kappa}\left( j \right),
\end{equation*}
where 
$
\tilde{\kappa}\left( j \right) :=\frac{L_{gy}+\frac{2\sqrt{3}}{3}\left( j+1 \right) ^3L_{gyy}\varepsilon _j}{\mu _g-\frac{2\sqrt{3}}{3}\left( j+1 \right) ^3L_{gyy}\varepsilon _j}.
$
\end{lemma}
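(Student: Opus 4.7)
My plan is to view the update $\Delta v_j = Q_j T_j^{-1} Q_j^{\top} \bar r_0$ produced inside an epoch as an inexact conjugate-gradient/minimal-residual step on the ``frozen'' system $A_1^{*}\Delta v=\bar b$, and then decompose the residual $\|\bar r_j\|$ into (i)~the classical Chebyshev decay one would obtain from an undisturbed Krylov iteration and (ii)~a perturbation term that records the drift of the Hessians $\{A_j\}$ and of the $y$-iterates across the epoch. The former accounts for the $2\sqrt{\tilde\kappa(j)}\bigl((\sqrt{\tilde\kappa(j)}-1)/(\sqrt{\tilde\kappa(j)}+1)\bigr)^j$ term, while the latter accounts for the additive $\sqrt{j}L_{gyy}\varepsilon_j\tilde\kappa(j)$.

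First, I would set the stage using the perturbed three-term recurrence from the preceding lemma, $A_1^{*}Q_j=Q_j T_j+\beta_{j+1}q_{j+1}e_j^{\top}+\delta Q_j$ with $\|\delta q_i\|\le L_{gyy}\varepsilon_j$. Reading this as $T_j=Q_j^{\top}(A_1^{*}-\delta Q_j Q_j^{\top})Q_j$ identifies the LancBiO iterate with the exact Galerkin/MINRES iterate of a perturbed symmetric operator $\tilde A_j$ on the approximate Krylov subspace $\mathrm{span}(Q_j)$. Weyl's inequality, combined with the sharp tridiagonal perturbation estimate that underlies Paige-style Lanczos analysis, then yields the extreme-eigenvalue bounds $\mu_g-\tfrac{2\sqrt{3}}{3}(j+1)^3L_{gyy}\varepsilon_j\le \lambda_{\min}(\tilde A_j)$ and $\lambda_{\max}(\tilde A_j)\le L_{gy}+\tfrac{2\sqrt{3}}{3}(j+1)^3L_{gyy}\varepsilon_j$, so that $\tilde\kappa(j)$ is precisely the condition number of $\tilde A_j$ restricted to the working subspace. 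The $(j+1)^3$ factor is delicate and will come out of the explicit tridiagonal sensitivity computation rather than from a crude $\sqrt{j}\|\delta Q_j\|$ bound.

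Next, I would invoke the standard Chebyshev-polynomial estimate for the minimal residual of a symmetric positive-definite system in a Krylov subspace of its operator: applied to $\tilde A_j$, this produces the first term. To transfer the bound from $\tilde A_j$ to the true operator $A_1^{*}$, I would write the difference between the two residuals as an application of $(A_1^{*}-\tilde A_j)\Delta v_j$ and control it using $\|A_1^{*}-\tilde A_j\|\le \|\delta Q_j\|\le \sqrt{j}L_{gyy}\varepsilon_j$ together with $\|\Delta v_j\|\le \tilde\kappa(j)\|\bar r_0\|/\lambda_{\max}(\tilde A_j)$; collecting terms yields the additive contribution $\sqrt{j}L_{gyy}\varepsilon_j\tilde\kappa(j)$.

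Finally, I would verify that the prescribed step sizes keep $\tilde\kappa(j)$ finite and positive throughout the epoch. Using $\theta\sim\mathcal{O}(1/m)$, the lower-level gradient step contracts $\|y_{k}-y^{*}_{k}\|$ with rate $1-\theta\mu_g$, so under Assumption~\ref{assu:y_initial} the $y$-error stays $\mathcal{O}(1/m^3)$; with $\lambda=0$ for the first $m_0=\Omega(1)$ steps and $\lambda\sim\mathcal{O}(1/m^4)$ afterwards, the cumulative $x$-drift satisfies $\|x_{mh+s}-x_{mh+t}\|=\mathcal{O}(1/m^3)$ across the epoch. Combining these gives $\varepsilon_j=\mathcal{O}(1/m^3)$ and hence $(j+1)^3L_{gyy}\varepsilon_j\le (m+1)^3L_{gyy}\varepsilon_j<\mu_g$, guaranteeing the denominator of $\tilde\kappa(j)$ stays strictly positive; this is exactly where Assumption~\ref{assu:y_initial} enters. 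The main obstacle I anticipate is the sharp perturbation-of-Lanczos step that delivers the $(j+1)^3$ factor in $\tilde\kappa(j)$ together with the precise $\sqrt{j}$ coefficient in the additive term: generic matrix-perturbation arguments give much cruder constants, so I expect to need a careful tridiagonal/Ritz-value book-keeping in the spirit of Paige and Greenbaum, inductively propagating $\|\delta q_i\|$ through the recurrence rather than bounding $\|\delta Q_j\|$ monolithically.
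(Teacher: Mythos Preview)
Your high-level decomposition into a Chebyshev decay term plus an additive perturbation term is correct and matches the paper, as does your plan to control $\varepsilon_j$ via the step-size choices and Assumption~\ref{assu:y_initial} (this is carried out in the paper as Lemma~\ref{lem:bound_v}). The gap is in how you extract the Chebyshev factor. The operator $\tilde A_j:=A_1^{*}-\delta Q_jQ_j^{\top}$ you propose is not symmetric, and the identity $T_j=Q_j^{\top}\tilde A_jQ_j$ fails because in the perturbed setting $Q_j^{\top}Q_j\ne I$ (this loss of orthogonality is exactly the strictly upper-triangular $R_j$ tracked in Lemma~\ref{lem:dyn_lanc_decom}) and the cross term $\beta_{j+1}Q_j^{\top}q_{j+1}e_j^{\top}$ does not vanish. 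More fundamentally, even a repaired $\tilde A_j$ would not make $\mathrm{span}(Q_j)$ equal to a Krylov subspace $\mathcal{K}_j(\tilde A_j,\bar b)$, so the polynomial minimax characterization behind the Chebyshev bound does not apply to $\Delta v_j$ on that operator.

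The paper sidesteps this by never leaving the tridiagonal world. Writing $\Delta\xi_j=\|\bar r_0\|T_j^{-1}e_1$ and using the recurrence gives $\bar r_j=-\beta_{j+1}q_{j+1}e_j^{\top}\Delta\xi_j-\delta Q_j\Delta\xi_j$. The second piece yields your additive term $\sqrt{j}L_{gyy}\varepsilon_j\tilde\kappa(j)$ via $\|\delta Q_j\|\le\sqrt{j}L_{gyy}\varepsilon_j$ and the eigenvalue bound on $T_j$ from Lemma~\ref{lem:T_bound} (which is where the $(j+1)^3$ factor is produced, by the Paige-type argument you anticipated). For the first piece, the key trick you are missing is that $\lvert\beta_{j+1}e_j^{\top}T_j^{-1}e_1\rvert$ is \emph{itself} the $j$-th residual of an exact Lanczos process applied to a $(j{+}1)\times(j{+}1)$ symmetric tridiagonal matrix $T$---built by appending one ``virtual'' row/column to $T_{j+1,j}$ with zero step sizes---started from $e_1$. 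Since this $T$ inherits the eigenvalue bounds of Lemma~\ref{lem:T_bound}, the standard Chebyshev estimate with condition number $\tilde\kappa(j)$ applies directly to this small exact problem. This embedding-into-a-small-tridiagonal step replaces your perturbed-operator construction and is what makes the argument close.
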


\begin{theorem}\label{the:convergence}
    Suppose Assumptions \ref{assu:f}, \ref{assu:g}, \ref{assu:strongg}, \ref{assu:boundfx} and \ref{assu:y_initial} hold. Within each epoch, if we set the step size $\theta\sim\mathcal{O}({1}/{m})$ a constant for $y$, and the step size for $x$ as zero in the first $m_0$ steps and the others as an appropriate constant $\lambda\sim\mathcal{O}({1}/{m^4})$, the iterates $\{x_k\}$ generated by \cref{alg:LancBiO} satisfy
        \begin{equation*}
            \frac{m}{K\left( m-m_0 \right)} \hspace{-3mm}\sum_{\footnotesize \substack{k=0,\\
    	 (k\,\emph{\texttt{mod}}\,m)>m_0 }}^K \hspace{-3mm}{\left\| \nabla \varphi \left( x_k \right) \right\| ^2} =  \mathcal{O} \left( \frac{m\lambda ^{-1}}{K\left( m-m_0 \right)} \right),
        \end{equation*}
        where $m_0\sim\Omega(\log m)$ is a constant and $m$ is the subspace dimension.
\end{theorem}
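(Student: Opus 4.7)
The plan is to combine the $L_\varphi$-smoothness of the hyper-objective (\cref{lem:phismooth}) with a bound on the hyper-gradient estimation error and with the subspace residual decay established in \cref{lem:descent_res}. Since $x_{k+1} - x_k = -\lambda \widetilde{\nabla}\varphi(x_k, y_k, v_k)$, I would start from the standard descent inequality
\[
\varphi(x_{k+1}) \le \varphi(x_k) - \lambda \langle \nabla\varphi(x_k), \widetilde{\nabla}\varphi(x_k,y_k,v_k)\rangle + \tfrac{L_\varphi \lambda^2}{2}\|\widetilde{\nabla}\varphi(x_k,y_k,v_k)\|^2,
\]
and use the identity $2\langle a,b\rangle = \|a\|^2 + \|b\|^2 - \|a-b\|^2$ to expose the term $-\tfrac{\lambda}{2}\|\nabla\varphi(x_k)\|^2$. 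Choosing $\lambda$ small enough (consistent with the stated $\lambda \sim \mathcal{O}(1/m^4)$) absorbs the $\|\widetilde{\nabla}\varphi\|^2$ term and leaves
\[
\varphi(x_{k+1}) \le \varphi(x_k) - \tfrac{\lambda}{2}\|\nabla\varphi(x_k)\|^2 + \tfrac{\lambda}{2}\|\nabla\varphi(x_k) - \widetilde{\nabla}\varphi(x_k,y_k,v_k)\|^2.
\]

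Next, I would decompose the estimation error as $\|\nabla\varphi(x_k) - \widetilde{\nabla}\varphi(x_k,y_k,v_k)\|^2 \lesssim \|y_k - y^*_k\|^2 + \|v_k - v^*_k\|^2$, where the constants follow from Assumptions \ref{assu:f}--\ref{assu:strongg} (Lipschitz gradients and Hessian, bounded $\nabla_y f$, strong convexity of $g(x,\cdot)$). The $v$-error is controlled via strong convexity of the quadratic in~\eqref{eq:v_quadra}: $\|v_k - v^*_k\| \le \mu_g^{-1}\|A_k v_k - b_k\| = \mu_g^{-1}\|r_k\|$ in LancBiO's notation, and \cref{lem:descent_res} gives a geometric-plus-perturbation bound on $\|\bar r_j\|/\|\bar r_0\|$ within an epoch. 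The $y$-error $\|y_k - y^*_k\|$ is tracked by the standard contraction argument for one step of gradient descent on a $\mu_g$-strongly convex, $L_{gy}$-smooth function with step size $\theta = \mathcal{O}(1/m)$, combined with the Lipschitz drift $\|y^*_{k+1} - y^*_k\| \le (L_{gx}/\mu_g)\|x_{k+1} - x_k\| = \lambda \cdot \mathcal{O}(1)$ from \cref{lem:ysmooth}. This yields a recursion $\|y_{k+1} - y^*_{k+1}\|^2 \le (1 - c\theta)\|y_k - y^*_k\|^2 + C\lambda^2/\theta$, which stays small thanks to $\lambda \ll \theta$.

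The role of the first $m_0 \sim \Omega(\log m)$ steps of each epoch (where $\lambda = 0$, so $x$ is frozen) is to let both the residual $\|\bar r_j\|$ and the inner error $\|y_k - y^*_k\|$ contract geometrically to size $\mathcal{O}(1/m^{\text{const}})$ before activating the upper-level update; the logarithmic $m_0$ is exactly what pays for the $\sqrt{\tilde\kappa(j)}$ prefactor in \cref{lem:descent_res}. During these frozen steps, $\varepsilon_j$ reduces to $\|y_{mh+s} - y^*_{mh+s}\|$ differences only, keeping $\tilde\kappa(j)$ uniformly bounded and the perturbation term $\sqrt{j}L_{gyy}\varepsilon_j\tilde\kappa(j)$ negligible. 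For the remaining $m - m_0$ steps of the epoch, I would sum the descent inequality to get
\[
\sum_{\substack{k:\,(k\,\mathrm{mod}\,m)>m_0}} \tfrac{\lambda}{2}\|\nabla\varphi(x_k)\|^2 \le \varphi(x_1) - \inf\varphi + \sum_{\text{active }k} \tfrac{\lambda}{2}\big(\|y_k - y^*_k\|^2 + \mu_g^{-2}\|r_k\|^2\big),
\]
then invoke \cref{lem:descent_res} and the $y$-recursion to bound the right-hand side by a quantity of order $K \cdot \mathcal{O}(\lambda \cdot m^{-\text{const}})$, which after dividing by the number of active iterations $K(m-m_0)/m$ gives the stated $\mathcal{O}(m\lambda^{-1}/(K(m-m_0)))$ rate.

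The main obstacle is the coupling between the epoch-local contraction in \cref{lem:descent_res} and the trajectory-dependent perturbation $\varepsilon_j$: $\varepsilon_j$ contains $\|x_{mh+s} - x_{mh+t}\|$, which is itself governed by $\lambda\|\widetilde{\nabla}\varphi\|$ and ultimately by $\|\nabla\varphi(x_k)\|$. To close the loop I would need a careful inductive argument---assume a uniform bound on $\varepsilon_j$ within the epoch, verify the descent inequality holds under this bound, and then show the update rule maintains the bound. The polynomial factors $(j+1)^3$ in $\tilde\kappa(j)$ and the scaling $\lambda \sim \mathcal{O}(1/m^4)$, $\theta \sim \mathcal{O}(1/m)$ are precisely calibrated so that $(m+1)^3 L_{gyy}\varepsilon_m \le \mu_g/2$ throughout, which is where \cref{assu:y_initial} enters and where the analysis is most delicate.
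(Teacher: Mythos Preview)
Your overall skeleton---descent lemma, error decomposition into $\|y_k-y_k^*\|^2$ and $\|v_k-v_k^*\|^2$, control of the $v$-error via \cref{lem:descent_res}, and the inductive handling of $\varepsilon_j$---matches the paper. The inductive part you describe (``assume a uniform bound on $\varepsilon_j$, verify descent, show the bound is maintained'') is exactly what the paper proves separately as the boundedness lemma for $v_k$ and $\|b_1-A_1\bar v_h\|\le C_r$.

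The genuine gap is in the final summation. You propose to bound
\[
\sum_{\text{active }k}\tfrac{\lambda}{2}\bigl(\|y_k-y_k^*\|^2+\mu_g^{-2}\|r_k\|^2\bigr)
\]
by $K\cdot\mathcal{O}(\lambda\cdot m^{-\mathrm{const}})$ and then divide. But \cref{lem:descent_res} only gives $\|r_k\|\le(\text{geometric in }j+\text{perturbation})\,\|\bar r_0\|$, and the induction only shows $\|\bar r_0\|\le C_r$ \emph{stays bounded}---it does not decay across epochs. Hence the per-epoch contribution to $\sum_k\|r_k\|^2$ is $\Theta(1)$, the total sum is $\Theta(K/m)$, and after dividing by $\lambda|\mathcal{I}|$ you are left with a floor $\Theta\!\bigl(m^{-\mathrm{const}}\bigr)$ that does not vanish as $K\to\infty$. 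The stated rate $\mathcal{O}(m\lambda^{-1}/(K(m-m_0)))$ would not follow.

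The paper closes this loop by replacing the crude bound $\|\bar r_0\|\le C_r$ with a \emph{recursive} estimate: defining $\delta_k:=c_1\|y_k-y_k^*\|^2+\|v_k-v_k^*\|^2$, it shows
\[
\delta_k\;\le\;\iota^{2(m-m_0)}\delta_{m(h-1)}+\iota^{2m}\delta_{m(h-2)}+\cdots
\;+\;C\,m^2\lambda^2\!\!\sum_{\text{recent }t}\|\widetilde{\nabla}\varphi(x_t)\|^2,
\]
i.e.\ the error at step $k$ is a geometrically discounted version of the error at previous epoch endpoints, plus terms proportional to $\lambda^2\|\widetilde{\nabla}\varphi\|^2=\|x_{t+1}-x_t\|^2$. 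This is precisely what lets the argument avoid the $\Theta(K)$ accumulation: the $\|x_{t+1}-x_t\|^2$ pieces are absorbed back into the descent inequality (with an inflated smoothness constant $L_\varphi+O(m^3)$, which explains $\lambda\sim 1/m^4$), and the discounted $\delta_{me}$ pieces are handled by putting $\delta_k$ into the Lyapunov function $\mathcal{L}_k=\varphi(x_k)+\delta_k$ and telescoping. With the choice $m\iota^{2(m-m_0)}<1/4$, the net coefficient of $\sum_k\delta_k$ on the right becomes nonpositive and can be dropped, yielding the clean bound $\sum_k\|\nabla\varphi(x_k)\|^2\le 4(\varphi(x_0)-\varphi^*+\delta_0)/\lambda$. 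Your sketch is missing both the cross-epoch recursion for $\delta_k$ and the Lyapunov construction; without at least one of them the argument does not close.
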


In other words, we prove that the proposed LancBiO is globally convergent, and the average norm square of the hyper-gradient $\norm{\nabla\varphi (x_k)}^2$ achieves $\epsilon$~within $\mathcal{O}(\epsilon^{-1})$ outer iterations.

\section{Numerical Experiments}\label{sec:exp}
In this section, we conduct experiments in the deterministic setting to empirically validate the performance of the proposed algorithms. We test on~a synthetic problem and two deep learning tasks. The selection of parameters and more details of the experiments are deferred to \cref{sec:details_exp}. We have made the code available on \href{https://github.com/UCAS-YanYang/LancBiO}{https://github.com/UCAS-YanYang/LancBiO}.

\begin{figure*}[h]
\begin{minipage}{\textwidth}
    \centering
    \includegraphics[width=0.9\linewidth]{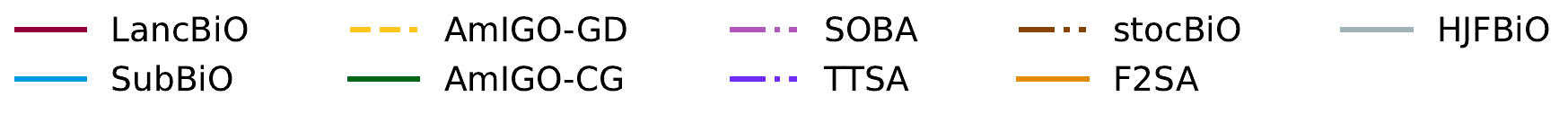}
\end{minipage}
\\
\begin{minipage}{0.33\textwidth}
    \centering
    \includegraphics[width=1\linewidth]{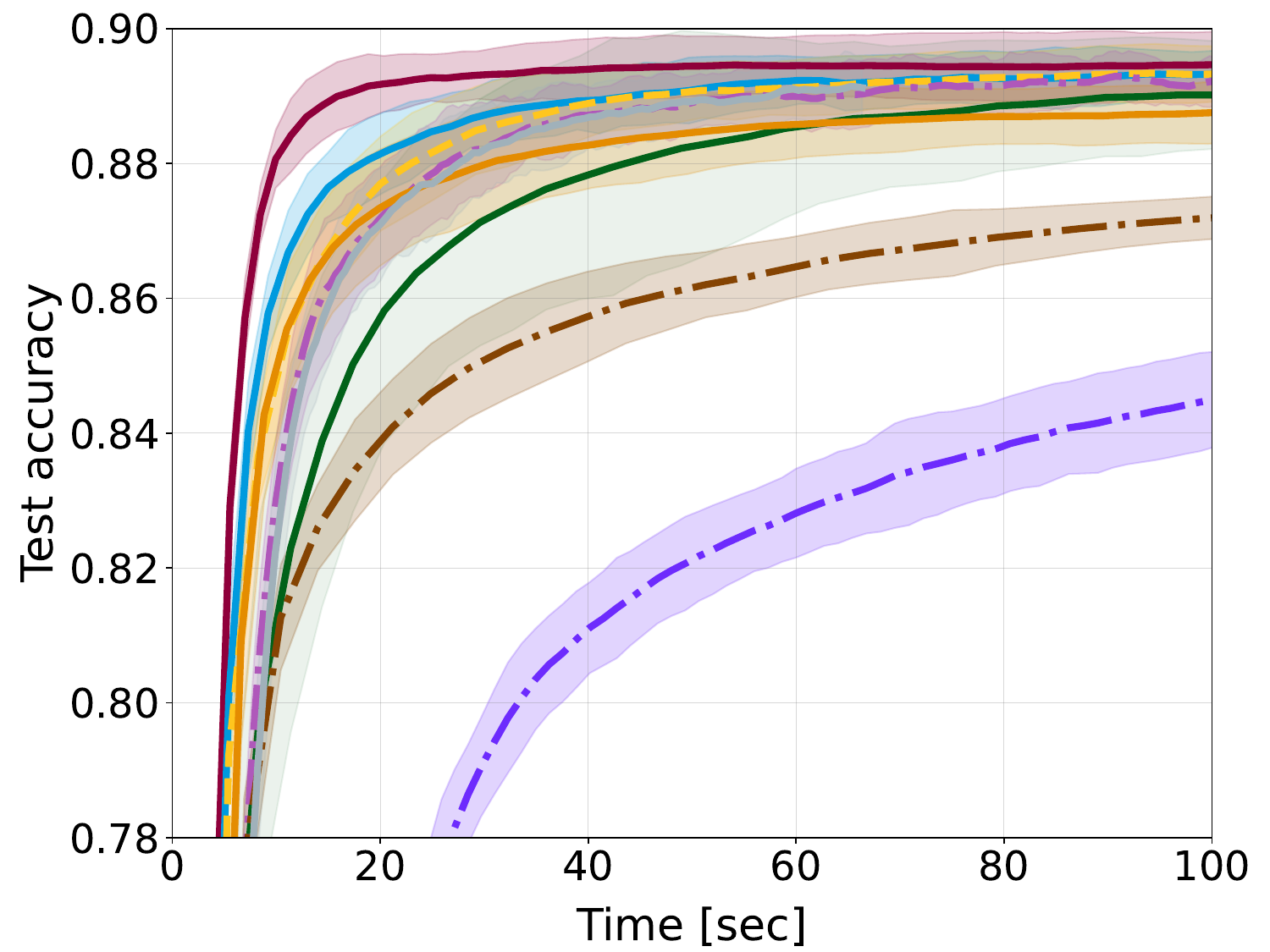}
\end{minipage}
\begin{minipage}{0.33\textwidth}
    \centering
    \includegraphics[width=1\linewidth]{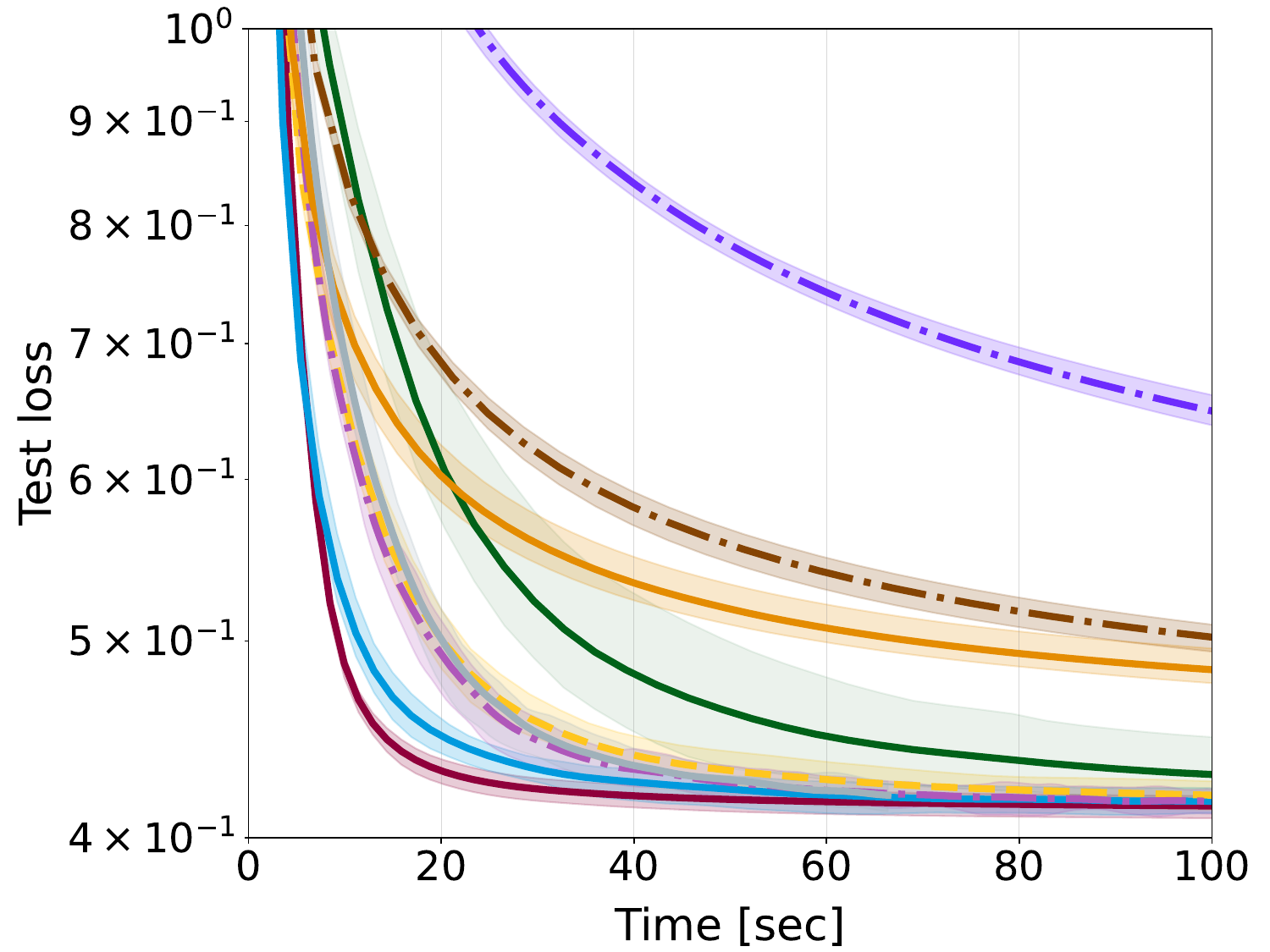}
\end{minipage}
\begin{minipage}{0.33\textwidth}
    \centering
    \includegraphics[width=1\linewidth]{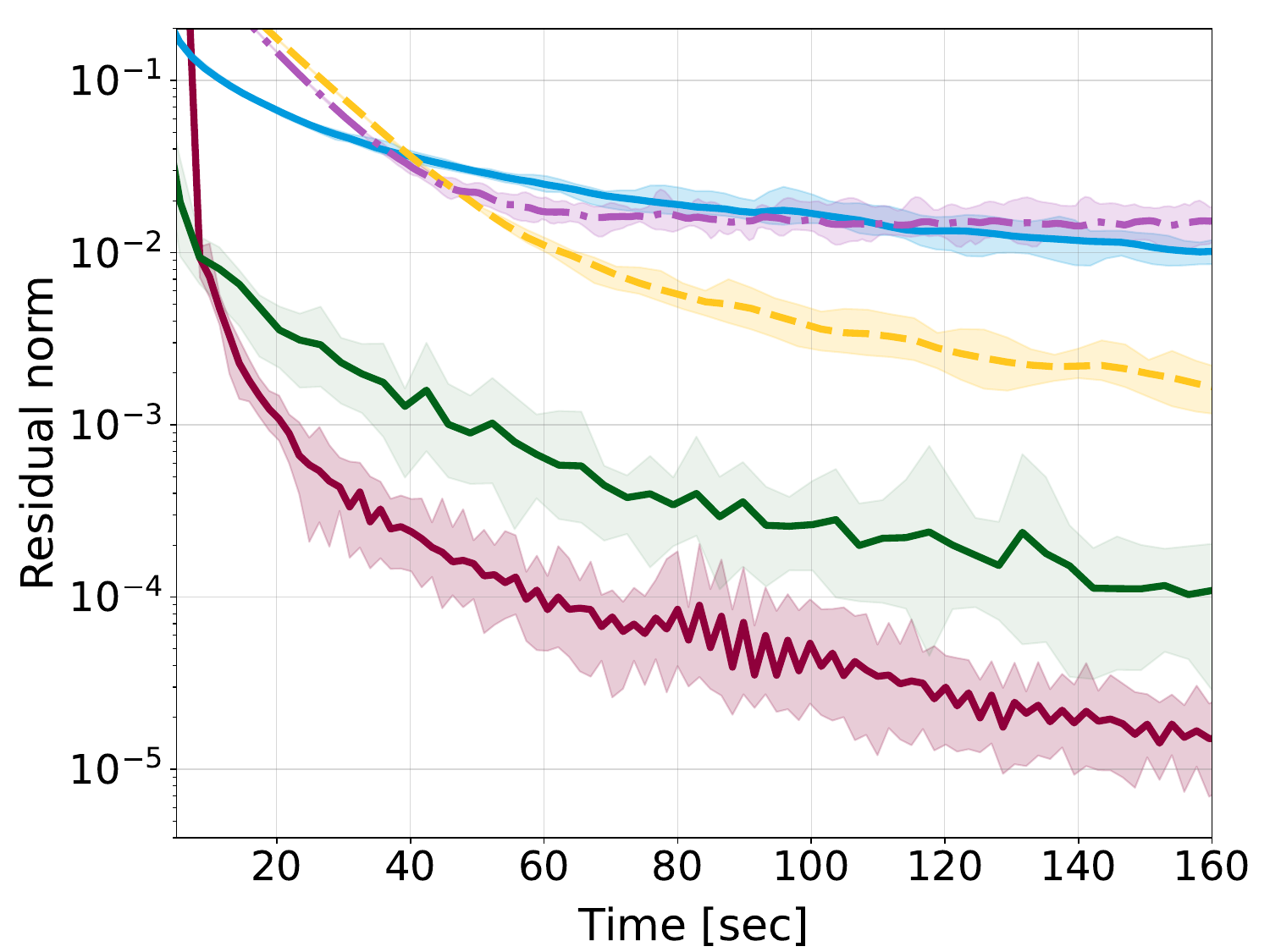}
\end{minipage}
\caption{Comparison of the bilevel algorithms on data hyper-cleaning task when~$p=0.8$. \boldt{Left:} test accuracy; \boldt{Center}: test loss; \boldt{Right}:~residual norm of the linear system, $\norm{A_kv_k-b_k}$.}
\label{fig:clean_detem_mean_compare}
\end{figure*}

\begin{table*}[htbp]
\caption{Comparison of the bilevel algorithms on data hyper-cleaning task across two corruption rates $p=0.5$ and $p=0.8$. The results are averaged over $10$ runs and $\pm$ is followed by the standard deviation. The results are conducted after $40$ and $60$ seconds running time.}
\begin{center}
\begin{scriptsize}
\setlength{\tabcolsep}{3pt} 
\begin{tabular}{lcccccc}
\toprule
\multicolumn{1}{l}{\multirow{2}{*}{Algorithm}} & \multicolumn{3}{c}{$p=0.5$} & \multicolumn{3}{c}{$p=0.8$}\\ \cmidrule(l{2pt}r{2pt}){2-4} \cmidrule(l{2pt}r{2pt}){5-7}
& \multicolumn{1}{c}{Test accuracy (\%)} & \multicolumn{1}{c}{Test loss} & \multicolumn{1}{c}{Residual} & \multicolumn{1}{c}{Test accuracy} (\%) & \multicolumn{1}{c}{Test loss} & \multicolumn{1}{c}{Residual} \\
\midrule
LancBiO   & ${90.35\pm0.1716}$ & ${0.36\pm 0.0028}$ & $\expnumber{1.20}{-4}\pm\expnumber{2.52}{-5}$&  ${89.45\pm0.2470}$ & ${0.42\pm0.0038}$ & ${\expnumber{9.18}{-5}\pm\expnumber{2.77}{-5}}$\\
SubBiO  & $90.21\pm0.2159$&$0.36\pm 0.0035$   &$\expnumber{2.22}{-2}\pm\expnumber{2.63}{-3}$&  $89.22\pm0.2587$& $0.42\pm 0.0050$&$\expnumber{2.49}{-2}\pm\expnumber{1.93}{-3}$\\
AmIGO-GD & $90.16\pm0.2114$&$0.37\pm 0.0044$ &$\expnumber{3.66}{-2}\pm\expnumber{4.00}{-3}$&  $89.14\pm 0.2722$& $0.43\pm0.0044$& $\expnumber{1.07}{-2}\pm\expnumber{8.25}{-4}$\\
AmIGO-CG & $90.06\pm0.2305$ & $0.38\pm 0.0053$  &$\expnumber{5.89}{-4}\pm\expnumber{2.52}{-4}$&  $88.57\pm0.5839$& $0.46\pm0.0176$& $\expnumber{6.32}{-4}\pm\expnumber{3.74}{-4}$\\
SOBA & $90.00\pm0.1811$&$0.37\pm 0.0051$      &$\expnumber{2.74}{-2}\pm\expnumber{8.52}{-3}$&  $88.99\pm0.2661$& $0.42\pm 0.0054$& $\expnumber{1.73}{-2}\pm\expnumber{1.70}{-3}$\\
TTSA &$89.35\pm0.2747$&$0.40\pm 0.0103$       &-&  $82.91\pm0.4516$& $0.74\pm 0.0072$& -\\
stocBiO & $89.20\pm0.1824$&$0.43\pm 0.0033$   &-&  $86.44\pm0.2907$& $0.54\pm0.0064$& -\\
\revise{F2SA} & \revise{$89.78\pm0.1969$} & \revise{$0.40\pm 0.0073$}   & \revise{-} &  \revise{$88.65\pm0.2828$} & \revise{$0.51\pm0.0055$} & \revise{-}\\
\revise{HJFBiO} & \revise{$90.21\pm0.2027$} & \revise{$0.37\pm 0.0048$}   & \revise{-} &  \revise{$89.30\pm0.3594$} & \revise{$0.43\pm0.0040$} & \revise{-}\\
\bottomrule
\end{tabular}
\end{scriptsize}
\end{center}
\label{tab:compare}
\end{table*}

\textbf{Data hyper-cleaning on \revise{three datasets}:} The data hyper-cleaning task~\citep{shaban2019truncated} aims to train a classifier in~a corruption scenario, where the labels of the training data are randomly altered to incorrect classification numbers at~a probability~$p$, referred to as the corruption rate. The results on the MNIST dataset are presented in~\cref{fig:clean_detem_mean_compare} and \cref{tab:compare}. Note that LancBiO is crafted for approximating the Hessian inverse vector product $v^*$, while the two solid methods, TTSA and stocBiO are not. Consequently, with respect to the residual norm of the linear system, i.e., $\norm{A_kv_k-b_k}$, we only compare the results with AmIGO-GD, AmIGO-CG, and SOBA. Observe that the proposed subspace-based LancBiO achieves the lowest residual norm and the best test accuracy, and subBiO is comparable to the other algorithms. Specifically, in~\cref{fig:clean_detem_mean_compare}, the efficiency of LancBiO stems from its accurate approximation of the linear system. Additionally, while AmIGO-CG is also adept~at approximating $v^*$, the results in \cref{tab:compare}~indicate that it tends to yield higher variance. \revise{Moreover, algorithms are also evaluated on the Fashion-MNIST and Kuzushiji-MNIST datasets; see \cref{fig:clean_data_Fashion} and \cref{fig:clean_data_KMNIST}, respectively. The proposed LancBiO performs better than other algorithms and showcases robustness across various datasets.}

\begin{figure*}[tbp]
\centering
\begin{minipage}{0.35\textwidth}
    \centering
    \includegraphics[width=1\linewidth]{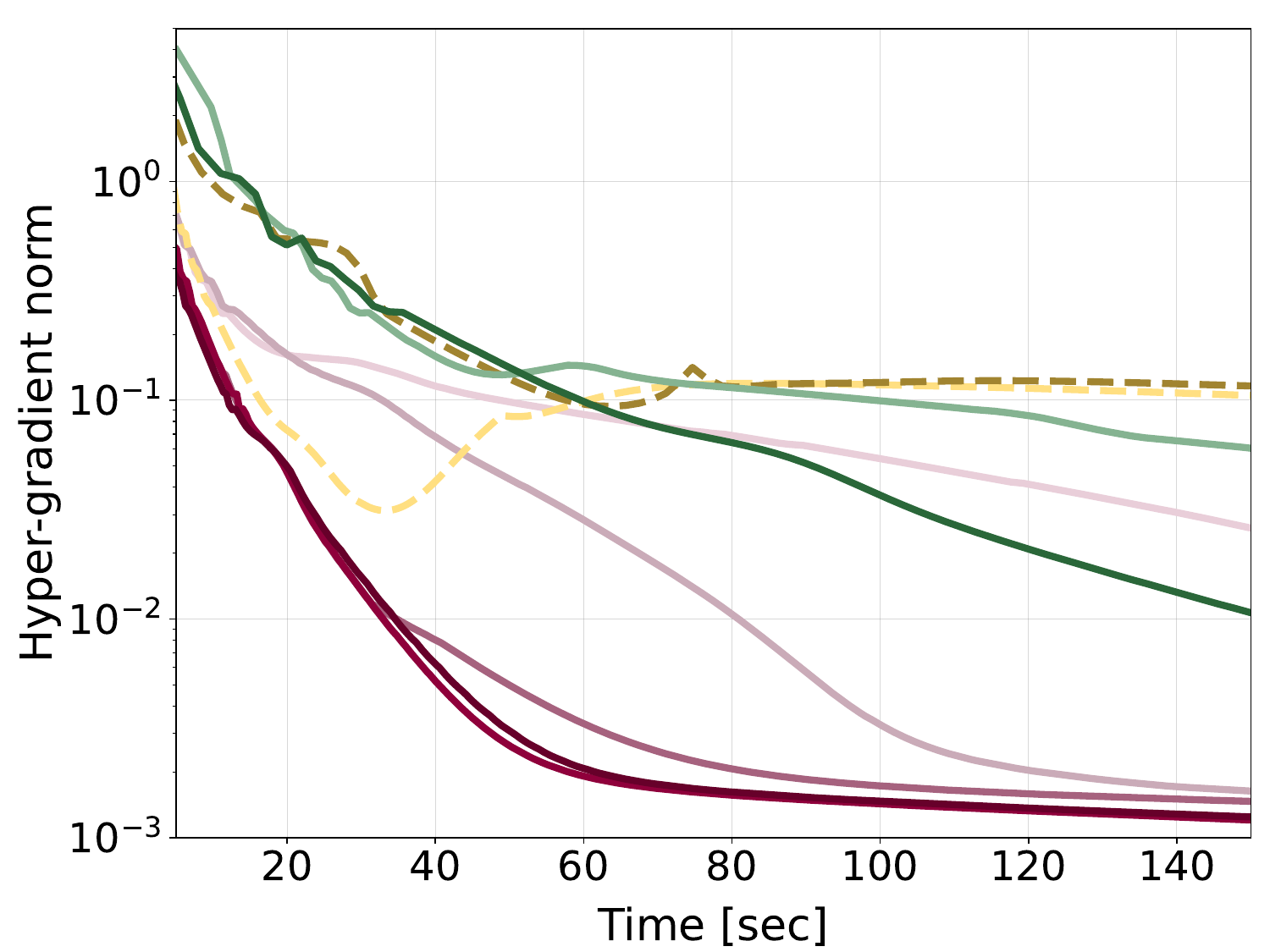}
\end{minipage}
\,
\begin{minipage}{0.35\textwidth}
    \centering
    \includegraphics[width=1\linewidth]{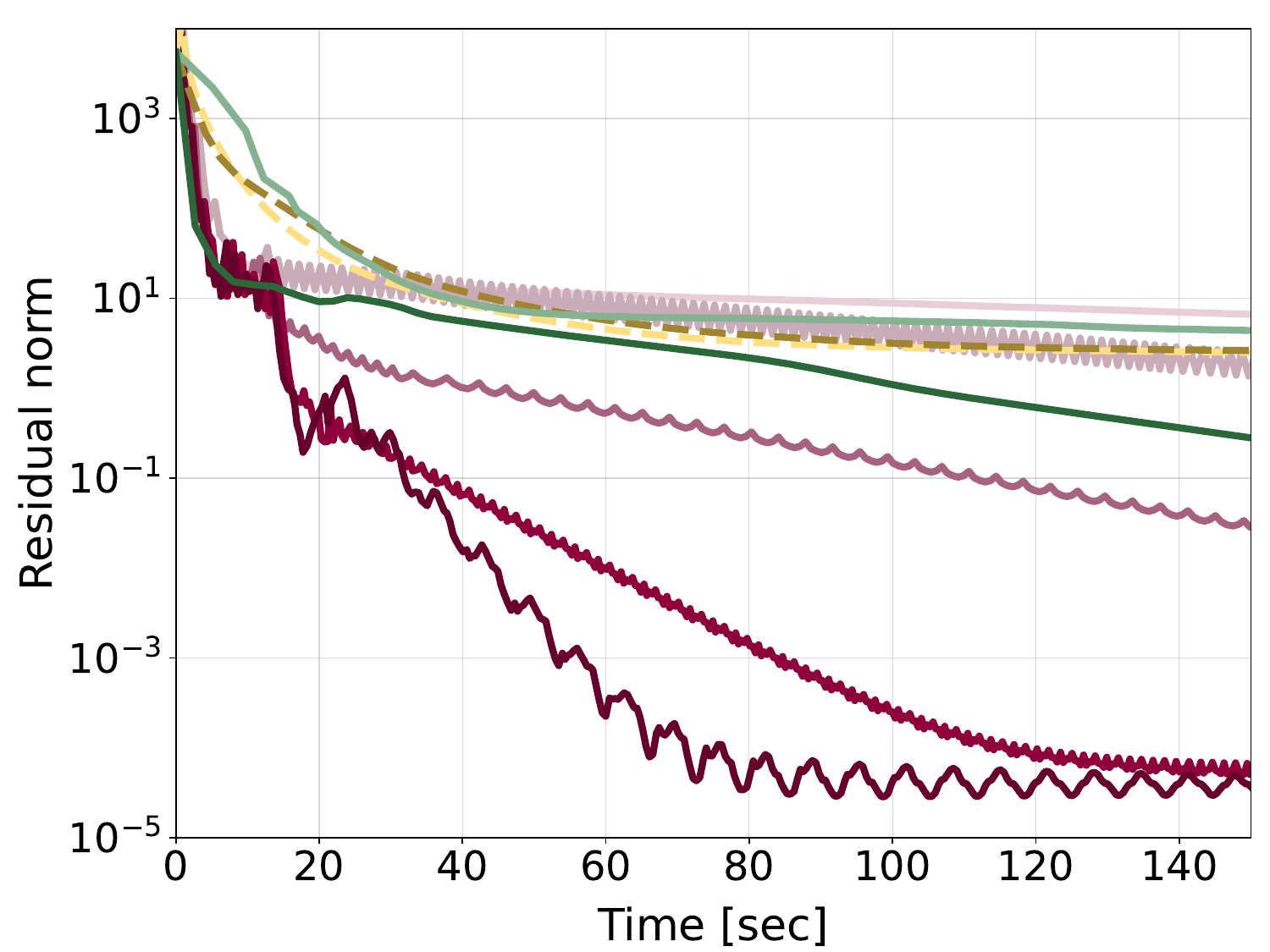}
\end{minipage}
\begin{minipage}{0.20\textwidth}
    \centering
    \includegraphics[width=1\linewidth]{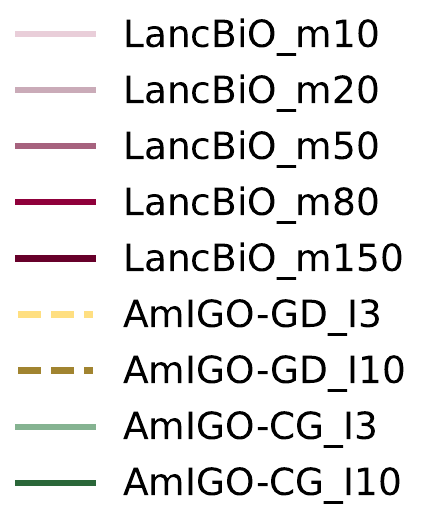}
\end{minipage}
\caption{Influence of the subspace dimension $m$ on LancBiO. {The post-fix of legend represents the subspace dimension $m$ or the inner iteration $I$.} \boldt{Left:} norm  of the hyper-gradient; \boldt{Right}:~residual norm of the linear system, $\norm{A_kv_k-b_k}$.}
\label{fig:sythe_seed4_compare}
\end{figure*}

\textbf{Synthetic problem:} We concentrate on a synthetic bilevel optimization \eqref{eq:standar_bio} with $d_x=d_y=d$ and
\begin{align*}
    f(x,y):=&~c_1\cos\kh{x^\top D_1 y} + \frac{1}{2} \norm{D_2x-y}^2,
    \\
    g(x,y):=&~c_2\sum_{i=1}^{d}{\sin(x_i+y_i)} + \log\kh{\sum_{i=1}^{d}{e^{x_iy_i}}} + \frac{1}{2}y^\top\kh{D_3+G}y.
\end{align*}
It can be seen from \cref{fig:sythe_mean_compare} that LancBiO achieves the final accuracy the fastest, which benefits from the more accurate $v^*$ estimation. \cref{fig:sythe_seed4_compare} illustrates how variations in $m$ and $I$ influence the performance of LancBiO and AmIGO, tested across a range from $10$ to $150$ for $m$, and from $2$ to $10$ for $I$. For clarity, we set the seed of the experiment at $4$, and present typical results to encapsulate the observed trends. It is observed that the increase of $m$ accelerates the decrease in the residual norm, thus achieving better convergence of the hyper-gradient, which aligns with the spirit of the classic Lanczos process. Under the same outer iterations, to attain a comparable convergence property, $I$ for AmIGO-CG should be set to~$10$. Furthermore, given that the number of Hessian-vector products averages at $(1+1/m)$ per outer iteration for LancBiO, whereas AmIGO involves~$I\ge 2$ calculations, it follows that LancBiO is more efficient. \revise{Moreover, to illustrate how the methods scale with increasing dimensions, we present the convergence time and the final upper-level value under different problem dimensions $d=10^{i}, i=1,2,3,4$ in~\cref{tab:scale}. The results demonstrate the proposed methods maintain decent performance across different problem dimensions.}

\begin{figure*}[htbp]
	\begin{minipage}{\textwidth}
		\centering
		\includegraphics[width=0.9\linewidth]{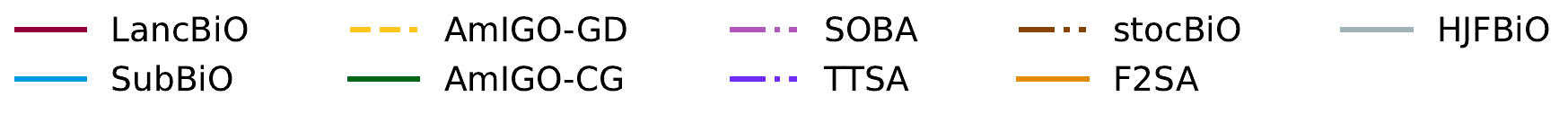}
	\end{minipage}
	\\
	\begin{minipage}{0.33\textwidth}
		\centering
		\includegraphics[width=1\linewidth]{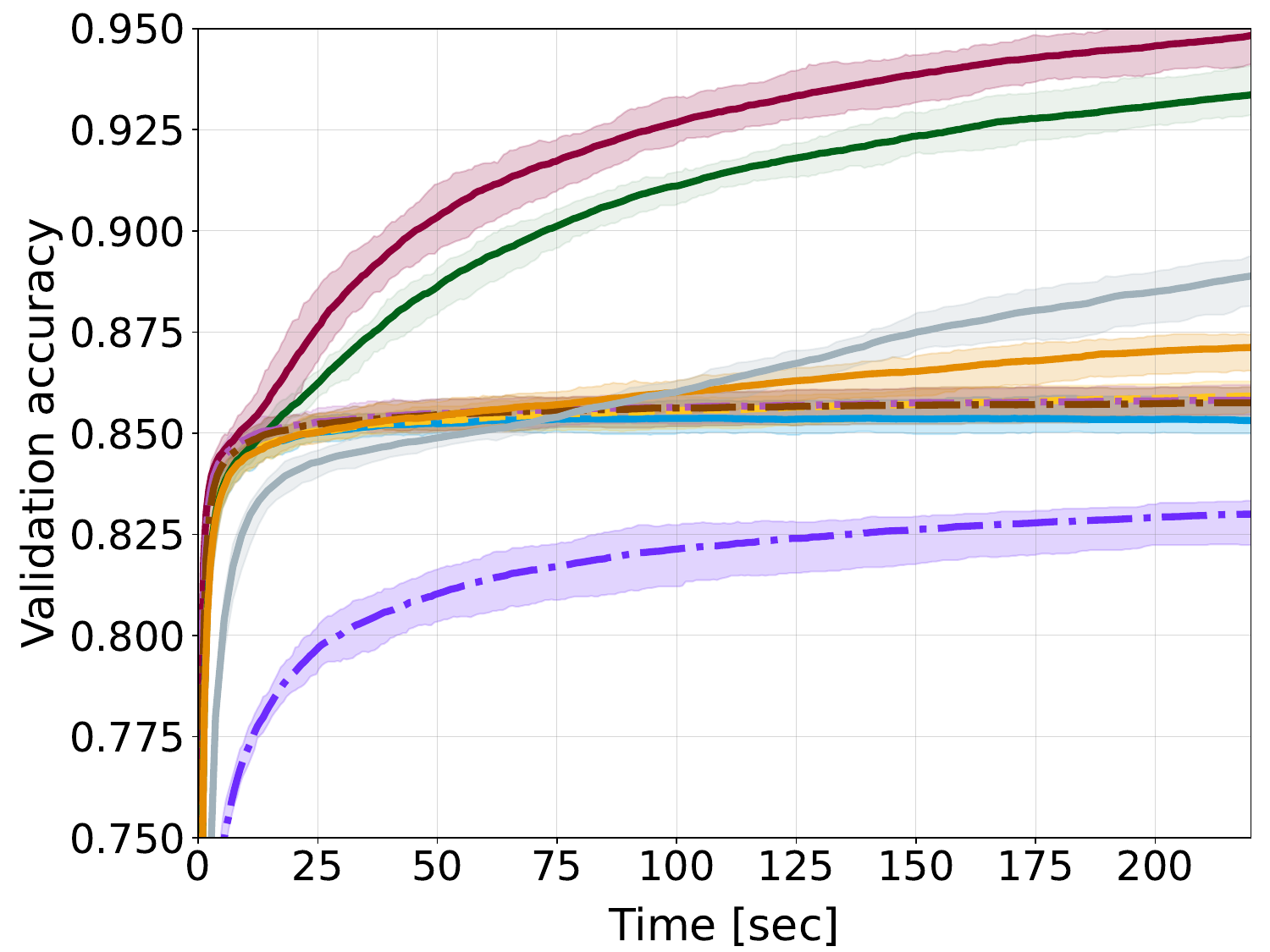}
	\end{minipage}
	\begin{minipage}{0.33\textwidth}
		\centering
		\includegraphics[width=1\linewidth]{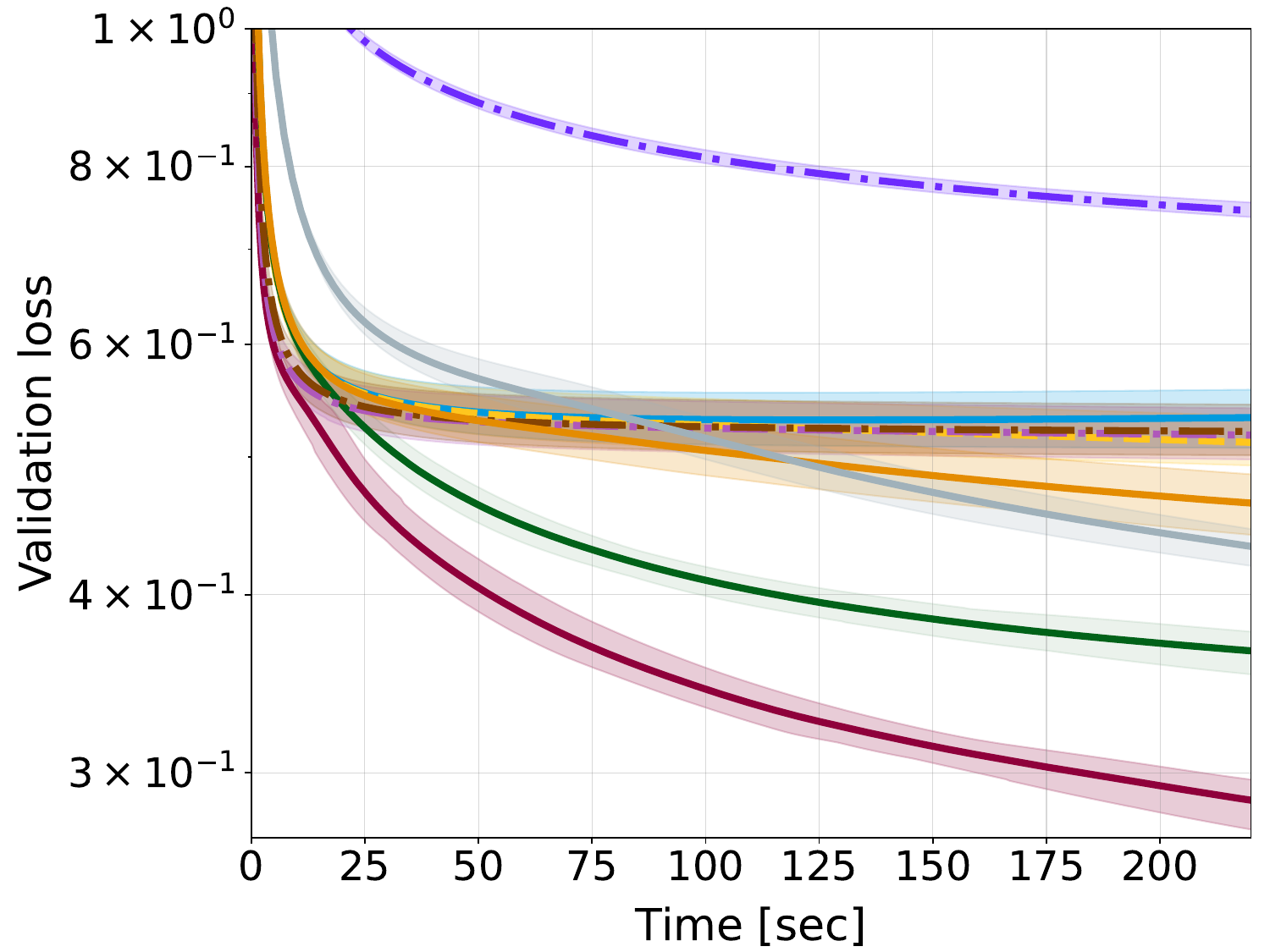}
	\end{minipage}
	\begin{minipage}{0.33\textwidth}
		\centering
		\includegraphics[width=1\linewidth]{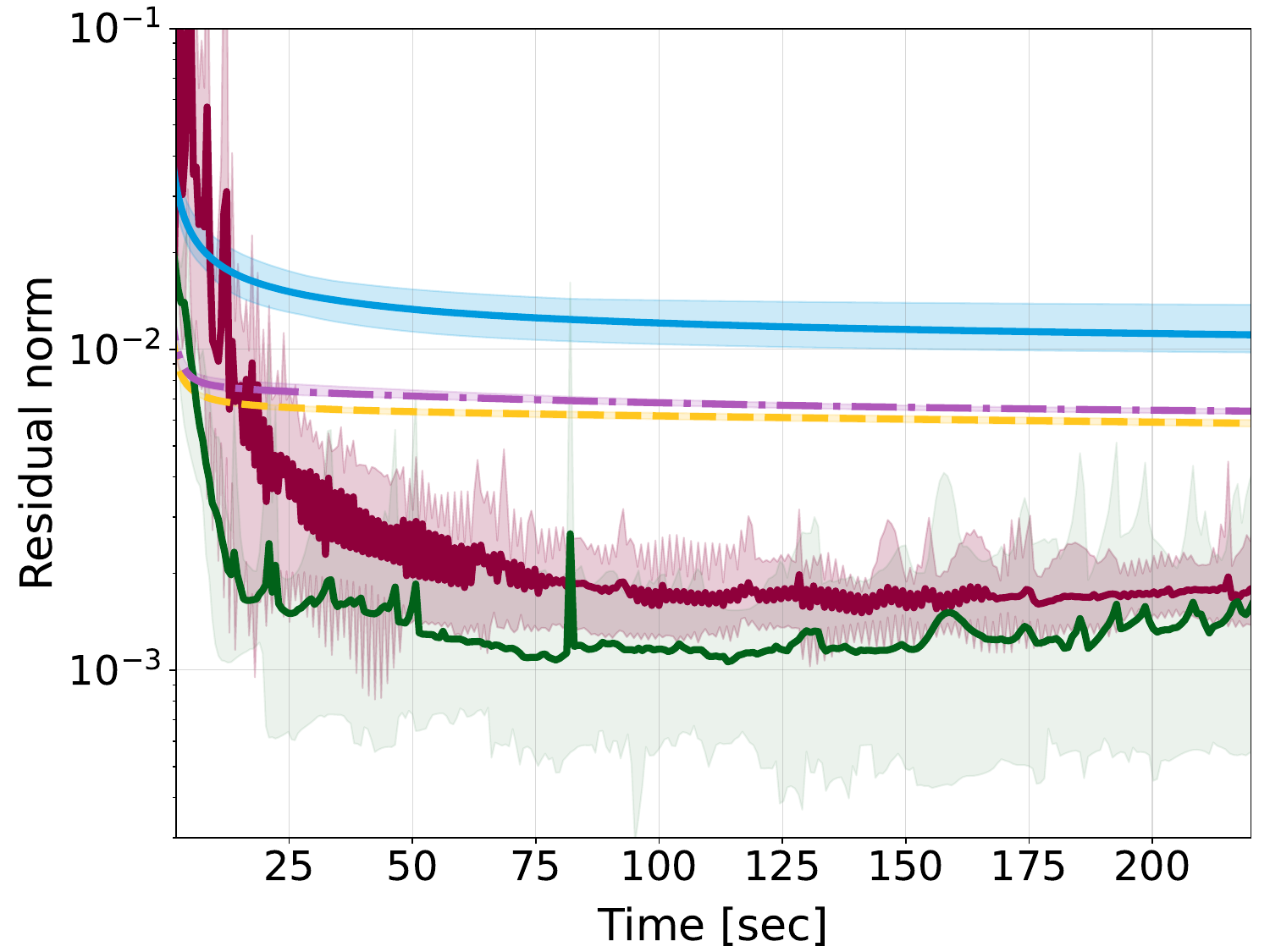}
	\end{minipage}
	\caption{Comparison of the bilevel algorithms on the hyper-parameters selection task. \boldt{Left:} validation accuracy; \boldt{Center}: validation loss; \boldt{Right}:~residual norm of the linear system, $\norm{A_kv_k-b_k}$.}
	\label{fig:hypasele_detem_mean_compare}
\end{figure*}

\textbf{Logistic regression on $20$Newsgroup:} Consider the hyper-parameter selection task on the $20$Newsgroups dataset~\citep{grazzi2020iteration}. The goal is to train a linear classifier $w$ and determine the optimal regularization parameter $\zeta$. As shown in~\cref{fig:hypasele_detem_mean_compare}, AmIGO-CG exhibits slightly better performance in reducing the residual norm. Nevertheless, under the same time, LancBiO implements more outer iterations to update $x$, which optimizes the hyper-function more efficiently.

Generally, to solve standard linear systems, the Lanczos process is recognized for its efficiency and versatility over gradient descent methods. LancBiO, in~a sense, reflects this principle in the context of bilevel optimization, underscoring the effectiveness of the dynamic Lanczos-aided approach.


\newpage
\subsubsection*{Acknowledgments}
\revise{Bin Gao was supported by the Young Elite Scientist Sponsorship Program by CAST. Ya-xiang Yuan was supported by the National Natural Science Foundation of China (grant No. 12288201). The authors are grateful to the Program and Area Chairs and Reviewers for their valuable comments and suggestions.}

\bibliographystyle{plainnat}
\bibliography{lib}

\clearpage
\appendix

{\Large \textbf{Appendix}}
\section{Related Work in Bilevel Optimization}\label{sec:bilevel}
A variety of bilevel optimization algorithms are based on reformulation \citep{liu2021value,yao2024Hessianfree}. These algorithms involve transforming the lower-level problem into~a set of constraints, such as the optimal conditions of the lower-level problem~\citep{dempe2012bilevel,li2023wolfe}, or the optimal value condition \citep{outrata1990numerical,ye1995optimality,ye2010new,dempe2013bilevel,lin2014solving,xu2014smoothing}. Furthermore, incorporating the constraints of the reformulated problem as the penalty function into the upper-level objective inspires~a series of algorithms \citep{liu2022bome,hu2023cdb,kwon2023f2sa,lu2023penalty,kwon2023penalty}. Another category of methods in bilevel optimization is the iterative differentiation (ITD) based method \citep{maclaurin2015gradient,franceschi2017forward,shaban2019truncated,grazzi2020iteration,liu2020bda, ji2021stocbio}, which takes advantage of the automatic differentiation technique. Central to this approach is the construction of~a computational graph during each outer iteration, achieved by solving the lower-level problem. This setup facilitates the approximation of the hyper-gradient through backpropagation, and it is noted that parts of these methods share a unified structure, characterized by recursive equations \citep{ji2021stocbio,li2022fsla,zhang2023introduction2Bi}. 
The approximate implicit differentiation (AID) treats the lower-level variable as~a function of the upper-level variable. It calculates the hyper-gradient to implement alternating gradient descent between the two levels \citep{ghadimi2018approximation,ji2021stocbio,chen2022stable,dagreou2022soba,li2022fsla,hong2023ttsa}. \revise{Moreover, extending the spirits of AID to distributed settings has garnered increasing interest in recent years\citep{kong2024decentralized,he2024distributed,zhu2024sparkle}.}

\section{Krylov Subspace and Lanczos Process}\label{sec:krylov_lanczos}
Krylov subspace~\citep{krylov1931numerical} is fundamental in numerical linear algebra~\citep{parlett1998symmetric,saad2011numerical,golub2013matrix} and nonlinear optimization~\citep{yuan2014review,liu2021subspace}, specifically in the context of solving large linear systems and eigenvalue problems. In this section, we will briefly introduce the Krylov subspace and the Lanczos process, and recap some important properties; readers are referred to~\citet{saad2011numerical,golub2013matrix} for more details.

An $N$-dimensional Krylov subspace generated by a matrix~$A$ and~a vector~$b$ is defined as follows,
\begin{equation*}
    \mathcal{K}_N(A, b):=\operatorname{span}\left\{b, A b, A^2 b, \ldots, A^{N-1} b\right\},
\end{equation*}
and the sequence of vectors $\left\{b, A b, A^2 b, \ldots, A^{N-1} b\right\}$ forms the basis for it. The Krylov subspace is widely acknowledged for its favorable properties in various aspects, including approximating eigenvalues~\citep{kuczynski1992eigenvec}, solving the regularized nonconvex quadratic problems~\citep{gould1999tr,zhang2017generalizedLanc,carmon2018krylov}, and reducing computation cost~\citep{brown1990hybridkrylov,bellavia2001ngmeres,liu2013limited,jiang2024krylov}.

The Lanczos process~\citep{lanczos1950lanczos} is~an algorithm that exploits the structure of the Krylov subspace when~$A$ is symmetric. Specifically, in the~$j$-th step of the Lanczos process, we can efficiently maintain~an orthogonal basis~$Q_j$ of $\mathcal{K}_j(A,b)$, so that $T_j=Q_j^\top AQ_j$ is tridiagonal, which means~a tridiagonal matrix~$T_j$ approximates~$A$ in the Krylov subspace. Consequently, it allows to solve the minimal residual problem or the eigenvalue problem efficiently within the Krylov subspace. There are several equivalent variants of the Lanczos process~\citep{paige1971thesis,paige1976error,meurant2006lanczos}, and we follow the update rule as shown in~\cref{alg:standard_lanczos}.
\begin{algorithm}[htbp]
    \caption{Lanczos process}
    \label{alg:standard_lanczos}
    \begin{algorithmic}[1]
        \REQUIRE dimension $m$, matrix $A\in\mathbb{R}^{n\times n}$, initial vector $b\in\mathbb{R}^n$
        \STATE {\bfseries Initialization:} $q_1=\frac{b}{\norm{b}}$, $q_0=\bf{0}$, $\beta_1=0$, $Q_0=T_0=\texttt{Empty Matrix}$
        \FOR{$j = 1, 2, \dots, m$}
            \STATE $u_j=Aq_j-\beta _jq_{j-1}$
            \STATE $\alpha _j=q_{j}^{\top}u_j$
            \STATE $\omega _j=u_j-\alpha _jq_j$
            \STATE $ \beta _{j+1}=\left\| \omega _j\right\|$
            \STATE $q_{j+1}=\omega _j/\beta _{j+1}$
            \STATE $Q_j = [Q_{j-1}\ q_j]$
            \STATE $T_j={\setlength{\arraycolsep}{1.pt} \left( \begin{array}{ccc:c}
            	&		&		&		\\
            	&		\phantom{{\small ss}}{\Large T_{j-1}}&		&		\\
            	&		&		&		\beta _j\vphantom{{\small \frac{2}{\frac{2}{3}}}}\\
                    \hdashline
            	&		&		\beta _j\phantom{{\small s}}&		\alpha _j\\
            \end{array} \right) }$
        \ENDFOR
        \ENSURE $T_m,Q_m,\norm{b}e_1$
    \end{algorithmic}
\end{algorithm}
We now present several key properties of the Krylov subspace and the Lanczos Process from~\citet{saad2011numerical}.
\begin{definition}\label{def:minimal}
    The minimal polynomial of a vector $v\in \mathbb{R}^n$ with respect to a matrix $A\in\mathbb{R}^{n\times n}$ is defined as the non-zero monic polynomial $p$ of the lowest degree such that $p(A)v = 0$, where a monic polynomial is a non-zero univariate polynomial with the coefficient of highest degree equal to~$1$.
\end{definition}

\begin{remark}
The degree of the minimal polynomial $p$ does not exceed $n$ because the set of $n\!+\!1$ vectors $\{A^{n}v,A^{n-1}v,\ldots,A^2v,Av,v\}$ is linearly dependent.
\end{remark}

\begin{remark}
Suppose the minimal polynomial of a vector $v$ with respect to a matrix $A$ is 
\begin{equation*}
    p(x) = x^m+c_{m-1}x^{m-1}+\cdots+c_2x^2+c_1x+c_0,
\end{equation*}
and has a degree of $m$. If $c_0\neq 0$ and $A$ is invertible, by \cref{def:minimal},
\begin{equation*}
   A^mv+c_{m-1}A^{m-1}v+\cdots+c_2A^2v+c_1Av+c_0v=0,
\end{equation*}
multiply both sides of the equation by $A^{-1}$ and rearrange the equation,
\begin{equation*}
   A^{-1}v=-\frac{1}{c_0}\left( A^{m-1}v+c_{m-1}A^{m-2}v+\cdots+c_2Av+c_1v \right).
\end{equation*}
In other words, $A^{-1}v$ belongs to the Krylov subspace $\mathcal{K}_m(A,v)$.
\end{remark}

\begin{proposition}\label{pro:Lanc_process}
    Denote the $n \times j$ matrix with column vectors $q_1, \ldots, q_j$ by $Q_j$  and the $j \times j$ tridiagonal matrix by $T_j$, all of which are generated by \cref{alg:standard_lanczos}. Then the following {three-term} recurrence holds.
    \begin{equation*}
    \begin{aligned}
    A Q_j & =Q_j T_j+ \beta_{j+1}q_{j+1} e_j^\top, \\
    Q_j^\top A Q_j & =T_j .
    \end{aligned}
    \end{equation*}
\end{proposition}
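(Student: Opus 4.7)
The plan is to prove both identities in two layered stages: first, a mechanical rewriting of the algorithm's updates yields the three-term recurrence regardless of any orthogonality or symmetry; second, an induction that leverages the symmetry of $A$ establishes orthonormality of the columns of $Q_j$, after which left-multiplying the recurrence by $Q_j^\top$ delivers $Q_j^\top A Q_j = T_j$.

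For the matrix form of the recurrence, I would combine the three updates
\begin{equation*}
u_j = A q_j - \beta_j q_{j-1}, \qquad \omega_j = u_j - \alpha_j q_j, \qquad \beta_{j+1} q_{j+1} = \omega_j,
\end{equation*}
into the single scalar identity $A q_j = \beta_j q_{j-1} + \alpha_j q_j + \beta_{j+1} q_{j+1}$, with the convention $\beta_1 q_0 = 0$. Inspecting the recursive definition of $T_j$ in \cref{alg:standard_lanczos} shows that its $i$-th column is $\beta_i e_{i-1} + \alpha_i e_i + \beta_{i+1} e_{i+1}$ for $i<j$ and $\beta_j e_{j-1}+\alpha_j e_j$ for $i=j$, so the columns of $Q_j T_j$ reproduce $A q_i$ for all $i<j$ while the $j$-th column is missing precisely the $\beta_{j+1} q_{j+1}$ term. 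That missing piece assembles into the rank-one residual $\beta_{j+1} q_{j+1} e_j^\top$, giving $A Q_j = Q_j T_j + \beta_{j+1} q_{j+1} e_j^\top$. This stage is pure bookkeeping and uses no structural hypothesis on $A$.

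The main obstacle is orthonormality of $\{q_1, \ldots, q_{j+1}\}$, without which one cannot collapse $Q_j^\top A Q_j$ to $T_j$. Unit norm is built into the normalization $q_{j+1} = \omega_j/\beta_{j+1}$, so the content is orthogonality, which I would prove by induction on $j$. Assuming $\{q_1,\ldots,q_j\}$ is orthonormal, I compute $q_i^\top \omega_j$ for each $i \le j$: for $i=j$ the definition $\alpha_j = q_j^\top A q_j$ makes the expression vanish; for $i=j-1$, the symmetry of $A$ gives $q_{j-1}^\top A q_j = (A q_{j-1})^\top q_j = \beta_j$ via the already-established recurrence $A q_{j-1} \in \operatorname{span}\{q_{j-2}, q_{j-1}, q_j\}$, exactly cancelling the $\beta_j q_{j-1}$ contribution inside $\omega_j$; for $i<j-1$, the same symmetry-plus-recurrence argument reduces $q_i^\top A q_j$ to inner products of $q_j$ against $\operatorname{span}\{q_{i-1}, q_i, q_{i+1}\}$, all of which vanish by the inductive hypothesis. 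Once orthonormality is in hand, left-multiplying the recurrence by $Q_j^\top$ annihilates the residual term via $Q_j^\top q_{j+1} = 0$ and leaves $Q_j^\top A Q_j = T_j$. The delicate point, and the reason this classical short recurrence is not merely routine, is that the two-term coupling through $\alpha_j$ and $\beta_j$ is self-propagating \emph{only} because $A$ is symmetric; without symmetry, a full Arnoldi-style reorthogonalization would be needed, and the tridiagonal structure of $T_j$ would be lost.
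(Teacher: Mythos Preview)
Your proposal is correct and follows the standard textbook argument. Note, however, that the paper does not supply its own proof of this proposition: it is stated as a classical property of the Lanczos process, imported from \citet{saad2011numerical}, so there is no in-paper proof to compare against. Your two-stage plan---first assembling the column identities $A q_i = \beta_i q_{i-1} + \alpha_i q_i + \beta_{i+1} q_{i+1}$ into the matrix recurrence, then proving orthonormality by induction using the symmetry of $A$ to collapse $Q_j^\top A Q_j$ to $T_j$---is exactly the proof one finds in Saad's book and other standard references, so it is entirely consistent with what the paper cites.
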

Based on \cref{pro:Lanc_process}, the Lanczos process is illustrated in \cref{fig:three_recurrence}.
\begin{figure}[htbp]
    \centering
    \includegraphics[width=.85\linewidth]{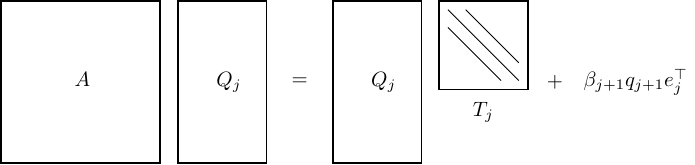}
    \caption{Classic three-term recurrence}
    \label{fig:three_recurrence}
\end{figure}

\section{Dynamic Lanczos Subroutine}\label{sec:dLanczosSub}
This section lists the \texttt{DLanczos} subroutine~(\cref{alg:dlanczos}) invoked in the LancBiO framework~(\cref{alg:LancBiO}). One of the main differences between \cref{alg:standard_lanczos} and \cref{alg:dlanczos} is that \cref{alg:standard_lanczos} represents the entire $m$-step Lanczos process, while
\cref{alg:dlanczos} serves as a one-step subroutine. Specifically, LancBiO invokes \texttt{DLanczos} once in each outer iteration (line~11 in \cref{alg:LancBiO}), expanding both $T$ and $Q$ by one dimension. Consequently, the inputs of \cref{alg:dlanczos} are not indexed to avoid confusion, with their corresponding variables $(T_{k-1},Q_{k-1},A_k,\beta_k)$ in \cref{alg:LancBiO} evolving across outer iterations indexed by~$k$. Another difference lies in the dynamic property of the \texttt{DLanczos} subroutine, i.e., the matrix~$A_k$ passed during each invocation varies in \cref{alg:LancBiO}, while the classic Lanczos process (\cref{alg:standard_lanczos}) employs~a~static~$A$.
\begin{algorithm}[htbp]
    \caption{Dynamic Lanczos subroutine for LancBiO (\texttt{DLanczos})}
    \label{alg:dlanczos}
    \label{alg:LancBiOSubroutine}
    \begin{algorithmic}[1]
        \REQUIRE tridiagonal matrix $T$, basis matrix $Q$ with $j$ columns, Hessian matrix $A$, and $\beta$
        \IF{ $j=1$}
            \STATE $[q_1]=Q,\ q_{-1}={\bf{0}}$	
        \ELSE
            \STATE $[q_1,q_2,\ldots,q_j] = Q$
        \ENDIF
        \STATE $u_j=Aq_j-\beta q_{j-1}$
        \STATE $\alpha_j = q_j^\top u_j$
        \STATE $\omega_j=u_j-\alpha_j q_j$
        \STATE $\beta_{j+1} = \norm{\omega_j}$
        \STATE $q_{j+1} = \omega_{j}/{\beta_{j+1}}$
        \STATE $Q_{j+1}=[Q\ q_{j+1}]$
        \STATE $T_{j+1}={\setlength{\arraycolsep}{1pt} \left( \begin{array}{ccc:c}
            &		&		&		\\
            &		\phantom{{\small ss}}{\Large T}&		&		\\
            &		&		&		\beta \vphantom{{\small \frac{2}{\frac{2}{3}}}}\\
                \hdashline
            &		&		\beta \phantom{{\small s}}&		\phantom{{\small s}}\alpha_j\\
        \end{array} \right) }$
        \ENSURE $T_{j+1},Q_{j+1},\beta_{j+1}$
    \end{algorithmic}
\end{algorithm}

\section{{Extending LancBiO to Non-convex Lower-level Problem}}\label{app:LLPL}
The Lanczos process is known for its efficiency of constructing Krylov subspaces and is capable of solving indefinite linear systems \citep{greenbaum1999lanczosindefinite}. In this section, we will briefly demonstrate that the dynamic Lanczos-aided bilevel optimization framework, LancBiO, can also handle lower-level problems with the indefinite Hessian.

Suppose $A$ is invertible, and consider solving a standard linear system
\begin{equation*}
    Ax=b,
\end{equation*}
with initial ponit $x_0$, initial residual $r_0=b-Ax_0$ and initial error $e_0=A^{-1}b-x_0$. If the matrix $A$ is positive-definite, the classic Lanczos algorithm is equivalent to the Conjugate Gradient (CG) algorithm \citep{hestenes1952cg}, both of which minimize the $A$-norm of the error in an affine space~\citep{greenbaum1997iterative,meurant2006lanczos}, i.e., at the $m$-th step,
\begin{equation*}
    x_m=\!\!\!\argmin_{x\in x_0+\mathcal{K}_m(A,b)}\norm{A^{-1}b-x}_A.
\end{equation*}
If the matrix $A$ is not positive-definite, MINRES \citep{paige1975minres} is the algorithm recognized to minimize the 2-norm of the residual in an affine space~\citep{greenbaum1997iterative,meurant2006lanczos}, i.e., at the $m$-th step,
\begin{equation}\label{eq:minres_subproblem}
    x_m=\!\!\!\argmin_{x\in x_0+\mathcal{K}_m(A,b)}\norm{b-Ax}.
\end{equation}
Additionally, based on $Q_m$ as the basis of the Krylov subspace $\mathcal{K}_m(A,b)$, $T_m$ as the projection of~$A$ onto $\mathcal{K}_m(A,b)$, and the {three-term} recurrence
\begin{equation*}
    A Q_m = Q_m T_m+ \beta_{m+1}q_{m+1} e_m^\top,
\end{equation*}
we can rewrite \eqref{eq:minres_subproblem} as
\begin{equation*}
    x_m = x_0 + Q_mc_m,
\end{equation*}
with
\begin{align*}
    c_m = &\argmin_c\norm{r_0-AQ_mc}
    \\
    = &\argmin_c\norm{r_0-Q_{m+1}T_{m+1,m}c}
    \\
    = &\argmin_c\norm{Q_{m+1}\left(\norm{r_0}e_1-T_{m+1,m}c\right)}
    \\
    = &\argmin_c\norm{\norm{r_0}e_1-T_{m+1,m}c},
\end{align*}
where
\begin{equation*}
    T_{m+1,m}:=\left[ \begin{array}{c}
	T_m\\
	\beta _{m+1}e_{m}^{\top}\\
    \end{array} \right]. 
\end{equation*}

In the spirit of MINRES, to address the bilevel problem where the lower-level problem exhibits an indefinite Hessian, the framework LancBiO~(\cref{alg:LancBiO}) requires only~a minor modification. Specifically, line~13 in~\cref{alg:LancBiO}, , which solves a small-size tridiagonal linear system, will be replaced by solving~a low-dimensional least squares problem
\begin{equation*}
    c_k=\argmin_c \norm{\norm{r_k}e_1-T_{k+1,k}c}^2,
\end{equation*}
and computing the correction
\begin{equation*}
    \Delta v_k=Q_kc_k,
\end{equation*}
where
\begin{equation*}
    T_{k+1,k}:=\left[ \begin{array}{c}
	T_k\\
	\beta _{k+1}e_{k}^{\top}\\
    \end{array} \right]. 
\end{equation*}

\section{Proof of Smoothness of $y^*$ and $\varphi$}\label{sec:app_smooth_phi}
To ensure completeness, we provide detailed proofs for the preliminary lemmas that characterize the smoothness of the lower level solution $y^*$ and the hyper-objective $\varphi$.

\begin{lemma}
    Under the Assumptions \ref{assu:g} and \ref{assu:strongg}, $y^*(x)$ is $\frac{L_{gx}}{\mu_g}$-Lipschitz continuous, i.e., for any $x_1,x_2\in\mathbb{R}^{d_x}$,
    \begin{equation*}
        \norm{y^*(x_1)-y^*(x_2)}\le \frac{L_{gx}}{\mu_g}\norm{x_1-x_2}.
    \end{equation*}
\end{lemma}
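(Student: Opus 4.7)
The plan is to exploit the first-order optimality condition satisfied by $y^*(x)$, combine it with the strong convexity of $g(x,\cdot)$, and then bound the resulting cross term by the Lipschitz property extracted from \cref{assu:g}.

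First I would note that under \cref{assu:strongg} the map $y\mapsto g(x,y)$ is $\mu_g$-strongly convex, so $y^*(x)$ is single-valued and characterized by the stationarity condition $\nabla_y g(x,y^*(x))=0$ for every $x$. Applying this at $x_1$ and $x_2$ and subtracting gives the identity
\begin{equation*}
\nabla_y g(x_1,y^*(x_1)) - \nabla_y g(x_1,y^*(x_2)) = \nabla_y g(x_2,y^*(x_2)) - \nabla_y g(x_1,y^*(x_2)).
\end{equation*}

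Next I would take the inner product of both sides with $y^*(x_1)-y^*(x_2)$. On the left-hand side, strong convexity of $g(x_1,\cdot)$ yields the lower bound
\begin{equation*}
\mu_g\,\|y^*(x_1)-y^*(x_2)\|^2 \le \langle \nabla_y g(x_1,y^*(x_1)) - \nabla_y g(x_1,y^*(x_2)),\, y^*(x_1)-y^*(x_2)\rangle.
\end{equation*}
On the right-hand side, I would use the key observation that because $g$ is $C^2$ and $\nabla_x g$ is $L_{gx}$-Lipschitz under \cref{assu:g}, the mixed Hessian satisfies $\|\nabla^2_{xy} g\|\le L_{gx}$, and by symmetry of the Hessian this is the same bound on $\nabla^2_{yx}g$. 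Hence $\nabla_y g(\cdot, y)$ is $L_{gx}$-Lipschitz in its first argument for fixed $y$, which together with Cauchy--Schwarz gives
\begin{equation*}
\langle \nabla_y g(x_2,y^*(x_2)) - \nabla_y g(x_1,y^*(x_2)),\, y^*(x_1)-y^*(x_2)\rangle \le L_{gx}\|x_1-x_2\|\cdot\|y^*(x_1)-y^*(x_2)\|.
\end{equation*}

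Combining the two bounds and dividing by $\mu_g\|y^*(x_1)-y^*(x_2)\|$ (handling the trivial case $y^*(x_1)=y^*(x_2)$ separately) yields the desired Lipschitz estimate. I do not anticipate a real obstacle here; the only minor subtlety is justifying why the $L_{gx}$-Lipschitz property of $\nabla_x g$ transfers to a Lipschitz-in-$x$ property of $\nabla_y g$, which follows from the symmetry of the Hessian of the $C^2$ function $g$ and hence from the equivalence $\|\nabla^2_{xy}g\|=\|\nabla^2_{yx}g\|\le L_{gx}$.
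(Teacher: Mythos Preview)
Your proof is correct, but it follows a different route from the paper. The paper's argument is a one-line application of the implicit function theorem: it writes $\nabla y^*(x)=\nabla^2_{xy}g(x,y)[\nabla^2_{yy}g(x,y)]^{-1}$ and bounds this directly by $\|\nabla^2_{xy}g\|\cdot\|[\nabla^2_{yy}g]^{-1}\|\le L_{gx}/\mu_g$. Your approach instead stays at the level of first-order optimality conditions and monotonicity: you never differentiate $y^*$ and obtain the Lipschitz bound by pairing strong convexity with Cauchy--Schwarz. Both proofs share the same key observation (the bound $\|\nabla^2_{xy}g\|\le L_{gx}$ inherited from the Lipschitz assumption on $\nabla_x g$), but your argument is slightly more elementary in that it does not invoke the differentiability of $y^*$, while the paper's argument is shorter once one accepts the implicit-function formula.
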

\begin{proof}
    The assunption that $\nabla_xg(x,y)$ is $L_{gx}$-Lipschitz reveals $\norm{\nabla^2_{xy}g(x,y)}\le L_{gx}$. Then 
    \begin{equation*}
        \left\| \nabla y^*\left( x \right) \right\| =\left\| \nabla _{xy}^{2}g\left( x,y \right) \left[ \nabla _{yy}^{2}g\left( x,y \right) \right] ^{-1} \right\| \le \left\| \nabla _{xy}^{2}g\left( x,y \right) \right\| \left\| \left[ \nabla _{yy}^{2}g\left( x,y \right) \right] ^{-1} \right\| \le \frac{L_{gx}}{\mu _g},
    \end{equation*}
    since $g(x,\cdot)$ is $\mu_g$-strongly convex.
\end{proof}

\begin{lemma}\label{lem:hypersmooth}
    Under the Assumptions \ref{assu:f}, \ref{assu:g} and \ref{assu:strongg}, the hyper-gradient $\nabla\varphi(x)$ is $L_{\varphi}$-Lipschitz continuous, i.e., for any $x_1,x_2\in\mathbb{R}^{d_x}$,
    \begin{equation*}
        \norm{\nabla\varphi(x_1)-\nabla\varphi(x_2)}\leq L_\varphi\norm{x_1-x_2},
    \end{equation*}
    where $L_\varphi=\left( 1+\frac{L_{gx}}{\mu _g} \right) \left( L_{fx}+\frac{L_{gx}L_{fy}+L_{gxy}C_{fy}}{\mu _g}+\frac{L_{gx}C_{fy}L_{gyy}}{\mu _{g}^{2}} \right) $.
\end{lemma}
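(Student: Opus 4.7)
The plan is to bound $\|\nabla\varphi(x_1)-\nabla\varphi(x_2)\|$ by viewing the hyper-gradient formula \eqref{eq:hypergradient} as a composite function $\nabla\varphi(x)=F(x,y^*(x))$, where
\begin{equation*}
F(x,y):=\nabla_xf(x,y)-\nabla_{xy}^2g(x,y)\,[\nabla_{yy}^2g(x,y)]^{-1}\,\nabla_yf(x,y),
\end{equation*}
and then combine the joint Lipschitz continuity of $F$ with the Lipschitz continuity of $y^*$ already established in \cref{lem:ysmooth}. The first step is to write
\begin{equation*}
\|\nabla\varphi(x_1)-\nabla\varphi(x_2)\|=\|F(x_1,y^*(x_1))-F(x_2,y^*(x_2))\|
\end{equation*}
and reduce the problem to proving that $F$ is $L_F$-Lipschitz jointly in $(x,y)$ with
$L_F=L_{fx}+(L_{gx}L_{fy}+L_{gxy}C_{fy})/\mu_g+L_{gx}C_{fy}L_{gyy}/\mu_g^2$; then \cref{lem:ysmooth} gives
\begin{equation*}
\|(x_1,y^*(x_1))-(x_2,y^*(x_2))\|\le \bigl(1+L_{gx}/\mu_g\bigr)\|x_1-x_2\|,
\end{equation*}
which supplies the outer factor in $L_\varphi$.

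Next I would treat the difference of the triple products via the standard telescoping identity
\begin{equation*}
A_1B_1c_1-A_2B_2c_2=(A_1-A_2)B_1c_1+A_2(B_1-B_2)c_1+A_2B_2(c_1-c_2),
\end{equation*}
with $A_i=\nabla_{xy}^2g(x_i,y^*(x_i))$, $B_i=[\nabla_{yy}^2g(x_i,y^*(x_i))]^{-1}$, and $c_i=\nabla_yf(x_i,y^*(x_i))$. Each factor is controlled by an ingredient already available: $\|A_i\|\le L_{gx}$ from \cref{assu:g}; $\|B_i\|\le 1/\mu_g$ from \cref{assu:strongg}; $\|c_i\|\le C_{fy}$ from \cref{assu:f}; the Lipschitz constants $L_{gxy}$ for $A$ and $L_{fy}$ for $c$ are given by \cref{assu:g} and \cref{assu:f}; and for the inverse I would use
\begin{equation*}
\|B_1-B_2\|\le \|B_1\|\,\|\nabla_{yy}^2g(x_1,y^*(x_1))-\nabla_{yy}^2g(x_2,y^*(x_2))\|\,\|B_2\|\le \frac{L_{gyy}}{\mu_g^2}\,\|(x_1,y^*(x_1))-(x_2,y^*(x_2))\|.
\end{equation*}
Summing the three telescoped contributions yields the $\mu_g^{-1}$ and $\mu_g^{-2}$ terms of $L_F$, and adding the $L_{fx}$-Lipschitz bound on $\nabla_xf$ gives $L_F$ in full.

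The steps are essentially routine; the only mildly delicate point is keeping careful track of which factor is upper-bounded by a norm (so that $\|\nabla_yf\|$ can be replaced by $C_{fy}$) versus which is a Lipschitz difference, and ensuring the boundedness assumption on $\nabla_yf$ is applied at the base point $(x_i,y^*(x_i))$ where \cref{assu:f} gives $C_{fy}$. The final step is simply to multiply $L_F$ by $(1+L_{gx}/\mu_g)$ and read off the stated value of $L_\varphi$.
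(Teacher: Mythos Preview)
Your proposal is correct and follows essentially the same route as the paper: the paper first bounds $\bigl\|(A_1^*)^{-1}b_1^*-(A_2^*)^{-1}b_2^*\bigr\|$ via the identity $(A_1^*)^{-1}(b_1^*-b_2^*)+(A_1^*)^{-1}(A_2^*-A_1^*)(A_2^*)^{-1}b_2^*$, and then telescopes $\nabla_{xy}^2g\cdot v^*$ over the two points, picking up the same factor $(1+L_{gx}/\mu_g)$ from \cref{lem:ysmooth}; your three-factor telescope $A_1B_1c_1-A_2B_2c_2$ simply merges these two steps into one and produces the identical constant. The only cosmetic caveat is that, as you yourself note, $F$ is not literally $L_F$-Lipschitz on all of $\mathbb{R}^{d_x}\times\mathbb{R}^{d_y}$ (since $\|\nabla_yf\|\le C_{fy}$ is assumed only at $y=y^*(x)$), so it is cleaner to phrase the argument directly at the two points $(x_i,y^*(x_i))$ rather than as joint Lipschitzness of $F$.
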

\begin{proof}
By combining 
\begin{equation*}
    \begin{aligned}
        &\left( A_{1}^{*} \right) ^{-1}b_{1}^{*}-\left( A_{2}^{*} \right) ^{-1}b_{2}^{*}
        \\
        =&\left( A_{1}^{*} \right) ^{-1}b_{1}^{*}-\left( A_{1}^{*} \right) ^{-1}b_{2}^{*}+\left( A_{1}^{*} \right) ^{-1}b_{2}^{*}-\left( A_{2}^{*} \right) ^{-1}b_{2}^{*}
        \\
        =&\left( A_{1}^{*} \right) ^{-1}\left( b_{1}^{*}-b_{2}^{*} \right) +\left( A_{1}^{*} \right) ^{-1}\left( A_{2}^{*}-A_{1}^{*} \right) \left( A_{2}^{*} \right) ^{-1}b_{2}^{*}
    \end{aligned}
\end{equation*}
with the properties revealed by Assumptions \ref{assu:f} \ref{assu:g} and \ref{assu:strongg}, we can derive
\begin{equation*}
    \left\| \left( A_{1}^{*} \right) ^{-1}b_1-\left( A_{2}^{*} \right) ^{-1}b_2 \right\| \le \frac{L_{fy}}{\mu _g}\left( 1+\frac{L_{gx}}{\mu _g} \right) \left\| x_1-x_2 \right\| +\frac{C_{fy}L_{gyy}}{\mu _{g}^{2}}\left( 1+\frac{L_{gx}}{\mu _g} \right) \left\| x_1-x_2 \right\|.
\end{equation*}
In a similar way, the subsequent decomposition holds,
\begin{equation*}
    \begin{aligned}
        \nabla \varphi \left( x_1 \right) -\nabla \varphi \left( x_2 \right) =&\left( \nabla _xf\left( x_1,y^*\left( x_1 \right) \right) -\nabla _xf\left( x_2,y^*\left( x_2 \right) \right) \right) 
        \\
        &-\nabla^2 _{xy}g\left( x_1,y^*\left( x_1 \right) \right) \left( A_{1}^{*} \right) ^{-1}b_{1}^{*}+\nabla^2 _{xy}g\left( x_1,y^*\left( x_1 \right) \right) \left( A_{2}^{*} \right) ^{-1}b_{2}^{*}
        \\
        &+\nabla^2 _{xy}g\left( x_2,y^*\left( x_2 \right) \right) \left( A_{2}^{*} \right) ^{-1}b_{2}^{*}-\nabla ^2_{xy}g\left( x_1,y^*\left( x_1 \right) \right) \left( A_{2}^{*} \right) ^{-1}b_{2}^{*}.
    \end{aligned}
\end{equation*}
It follows that
\begin{equation*}
    \begin{aligned}
        \left\| \nabla \varphi \left( x_1 \right) -\nabla \varphi \left( x_2 \right) \right\| \le&~L_{fx}\left( 1+\frac{L_{gx}}{\mu _g} \right) \left\| x_1-x_2 \right\| 
        \\
        &+L_{gx}\left( 1+\frac{L_{gx}}{\mu _g} \right) \left( \frac{L_{fy}}{\mu _g}+\frac{C_{fy}L_{gyy}}{\mu _{g}^{2}} \right) \left\| x_1-x_2 \right\| 
        \\
        &+L_{gxy}\frac{C_{fy}}{\mu _g}\left( 1+\frac{L_{gx}}{\mu _g} \right) \left\| x_1-x_2 \right\| 
        \\
        =&~L_{\varphi}\left\| x_1-x_2 \right\|,
    \end{aligned}
\end{equation*}
where $L_{\varphi}:=\left( 1+\frac{L_{gx}}{\mu _g} \right) \left( L_{fx}+\frac{L_{gx}L_{fy}+L_{gxy}C_{fy}}{\mu _g}+\frac{L_{gx}C_{fy}L_{gyy}}{\mu _{g}^{2}} \right) $.
\end{proof}

\section{Properties of Dynamic Subspace in \cref{sec:subspaceerror}}\label{sec:proof_lanc}
In this section, we focus on the properties of the basis matrix $Q$ and the tridiagonal matrix $T$ constructed within each epoch of the dynamic Lanczos process. 
Denote
\begin{equation*}
    A_{k}^{*}=\nabla _{yy}^{2}g\left( x_k,y^*_k \right)\text{ and }b_k^*=\nabla _yf\left( x_k,y^*_k\right).
\end{equation*}
An \emph{epoch} is constituted of~a complete $m$-step dynamic Lanczos process between two restarts, namely, after~$h$ epochs, the number of outer iterations is~$mh$. Given the outer iterations $k=mh+j$ for $j=1,2,\ldots,m$, we denote 
\begin{equation*}
 \varepsilon _{st}^{h}:=\left( 1+\frac{L_{gx}}{\mu _g} \right) \left\| x_{mh+s}-x_{mh+t} \right\| +\left\| y_{mh+s}-y_{mh+s}^{*} \right\|   
\end{equation*}
for $s,t=1,2,\dots,m$ and 
\begin{equation*}
    \varepsilon _{j}^{h}:=\max_{1\le s,t\le j} \varepsilon _{st}^{h},
\end{equation*}
serving as the accumulative difference. For brevity, we omit the superscript where there is no ambiguity, and we are slightly abusing of notation that at the current epoch, $\{A_{mh+j}\}$ and $\{b_{mh+j}\}$ are simplified by $\{A_{j}\}$ and $\{b_{j}\}$ for~$j=1,\dots,m$. In addition, the approximations in the residual system~\eqref{eq:residual} are simplified by~$\bar{v}$ and $\bar{b}:=b_1-A_1\bar{v}$.

We rewrite the dynamic update rule from \cref{sec:lancbio}
\begin{align}
    u_j&=A_jq_j-\beta _jq_{j-1}, \label{eq:lanc_u}
    \\
    \alpha _j&=q_{j}^{\top}u_j,    \label{eq:lanc_alpha}
    \\
    \omega _j&=u_j-\alpha _jq_j,\label{eq:lanc_omega}
    \\
    \beta _{j+1}&=\left\| \omega _j \right\| ,\label{eq:beta}
    \\
    q_{j+1}&=\omega _j/\beta _{j+1},\label{eq:lanc_q}
\end{align}
for $j=1,2,\ldots,m$ with $q_0=\mathbf{0}\ ,\beta _1=0$ and $Q_1=q_1=\bar{b}/\norm{\bar{b}}$.
The following proposition characterizes that the dynamic subspace constructed in~\cref{alg:LancBiO} within an epoch is indeed an approximate Krylov subspace. 
\begin{proposition}
    At the $j$-th step within an epoch ($j=1,2,\ldots,m-1$), the subspace spanned by the matrix $Q_{j+1}$ in~\cref{alg:LancBiO} satisfies
    \begin{equation}\label{eq:Q_characterize}
        \mathrm{span}(Q_{j+1})\subseteq\mathrm{span}\left\{ A_{1}^{a_1}A_{2}^{a_2}\cdots A_{j}^{a_j}\bar{b}\,\left|\! \begin{array}{c}
	a_s=0\ \text{\emph{or}}\ 1\\
	\forall s=1,2,\ldots,j\\
        \end{array}\!\right.\!\right\} .
    \end{equation}
    Specifically, when $A_1=A_2=\cdots=A_j=A$ and $Q_{j+1}$ is of full rank,
    \begin{equation*}
        \mathrm{span}(Q_{j+1}) = \mathcal{K}_{j+1}\kh{A,\bar{b}}
    \end{equation*}
\end{proposition}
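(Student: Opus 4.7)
The plan is to prove the containment by induction on $j$ using the three-term recurrence \eqref{eq:lanc_u}--\eqref{eq:lanc_q}, and then deduce the equality in the stationary case by a dimension count. The base case $j=0$ is immediate: the initialization gives $Q_1 = q_1 = \bar{b}/\|\bar{b}\|$, which lies in $\mathrm{span}\{\bar{b}\}$, matching the stated set (empty product convention).

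For the inductive step, assume $\mathrm{span}(Q_j) \subseteq \mathrm{span}\{A_1^{a_1}\cdots A_{j-1}^{a_{j-1}}\bar{b} : a_s \in \{0,1\}\}$. Rewriting \eqref{eq:lanc_u}--\eqref{eq:lanc_q} as
\[
q_{j+1} \;=\; \frac{1}{\beta_{j+1}}\bigl(A_j q_j - \alpha_j q_j - \beta_j q_{j-1}\bigr),
\]
we see that $q_{j+1}$ is a linear combination of $q_j$, $q_{j-1}$, and $A_j q_j$. By the inductive hypothesis, $q_j$ and $q_{j-1}$ already lie in the target set with $a_j = 0$ (using the trivial inclusion $\mathrm{span}(Q_{j-1}) \subseteq \mathrm{span}(Q_j)$). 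The new term $A_j q_j$ is obtained by multiplying an element of that set on the left by $A_j$, contributing a factor of $A_j^{1}$ and thus producing elements indexed by $a_j = 1$. Combining these places $q_{j+1}$ inside $\mathrm{span}\{A_1^{a_1}\cdots A_j^{a_j}\bar{b} : a_s \in \{0,1\}\}$, which completes the induction and yields \eqref{eq:Q_characterize}.

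For the equality statement, when $A_1 = \cdots = A_j = A$, every monomial $A_1^{a_1}\cdots A_j^{a_j}\bar{b}$ collapses to $A^{a_1 + \cdots + a_j}\bar{b}$, with exponents ranging over $\{0,1,\dots,j\}$. Hence the target set equals $\mathrm{span}\{\bar{b}, A\bar{b}, \dots, A^j\bar{b}\} = \mathcal{K}_{j+1}(A,\bar{b})$. The containment just established gives $\mathrm{span}(Q_{j+1}) \subseteq \mathcal{K}_{j+1}(A,\bar{b})$. The Krylov subspace has dimension at most $j+1$; if $Q_{j+1}$ is of full rank, then $\dim \mathrm{span}(Q_{j+1}) = j+1$, and dimension counting forces equality.

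The main obstacle is purely notational rather than mathematical: since the $A_k$'s do not in general commute, the recurrence places $A_j$ on the left while the stated monomial lists indices in increasing order from left to right. One must either interpret the stated span as closed under the relevant compositions, or re-index the product so that the most recently applied matrix appears leftmost. This is a bookkeeping matter and disappears entirely in the stationary case, which is the only one used to recover the classical Krylov subspace identity needed for the subsequent residual-minimization analysis.
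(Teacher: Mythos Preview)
Your proof is correct and follows essentially the same inductive argument as the paper, using the three-term recurrence to show that each new $q_{j+1}$ lands in the target span. You go slightly further than the paper by supplying the dimension count for the equality case (which the paper leaves implicit) and by flagging the left-versus-right ordering of the noncommuting $A_s$ in the stated monomial---a notational wrinkle the paper's own proof glosses over in exactly the same way.
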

\begin{proof}
    Note that $Q_1=\mathrm{span}\{q_1\}$ with $q_1 = \frac{\bar{b}}{\norm{\bar{b}}}$ satisfies \eqref{eq:Q_characterize}. We will give a proof by induction. Suppose for $i=1,2,\ldots,j$, it holds that
    \begin{equation*}
        \mathrm{span}(Q_{i+1})\subseteq\mathrm{span}\left\{ A_{1}^{a_1}A_{2}^{a_2}\cdots A_{i}^{a_i}\bar{b}\,\left|\! \begin{array}{c}
	a_s=0\ \text{or}\ 1\\
	\forall s=1,2,\ldots,i\\
        \end{array}\!\right.\!\right\} .
    \end{equation*}
    By the dynamic Lanczos process, it yields
    \begin{equation}\label{eq:updateq}
        q_{j+2}=\frac{1}{\beta _{j+2}}\left( A_{j+1}q_{j+1}-\beta _{j+1}q_{j}-\alpha _{j+1}q_{j+1} \right).
    \end{equation}
    Since 
    \begin{equation*}
            q_{j+1}\in \mathrm{span}\left\{ A_{1}^{a_1}A_{2}^{a_2}\cdots A_{j}^{a_j}\bar{b}\,\,\left| \begin{array}{c}
    	a_s=0\ \text{or}\,\,a_s=1\\
    	\forall s=1,2,\ldots,j\\
    \end{array} \right. \right\},
    \end{equation*}
    then we have
    \begin{equation*}
            A_{j+1}q_{j+1} \in \mathrm{span}\left\{ A_{1}^{a_1}A_{2}^{a_2}\cdots A_{j+1}^{a_{j+1}}\bar{b}\,\,\left| \begin{array}{c}
    	a_s=0\ \text{or}\,\,a_s=1\\
    	\forall s=1,2,\ldots,j+1\\
    \end{array} \right. \right\}.
    \end{equation*}
    It follows from \eqref{eq:updateq} that
    \begin{equation*}
            q_{j+2} \in \mathrm{span}\left\{ A_{1}^{a_1}A_{2}^{a_2}\cdots A_{j+1}^{a_{j+1}}\bar{b}\,\,\left| \begin{array}{c}
    	a_s=0\ \text{or}\,\,a_s=1\\
    	\forall s=1,2,\ldots,j+1\\
    \end{array} \right. \right\}.
    \end{equation*}
    By induction, we complete the proof.
\end{proof}

Although we can estimate the difference between the basis of the above two subspaces, it is noted that the Krylov subspaces can be very sensitive to small perturbation \citep{meurant2006lanczos,paige1976error,paige1980accuracy,greenbaum1997iterative}. The next lemma interprets the perturbation analysis for the dynamic Lanczos process in terms of $A^*_j$, which satisfies an approximate {three-term} recurrence with a perturbation term $\delta Q$.

\begin{lemma}\label{lem:dyn_lanc_decom}
        Suppose Assumptions \ref{assu:f} to \ref{assu:strongg} hold. The dynamic Lanczos process in~\cref{alg:LancBiO} with normalized $q_1$ and $\alpha_j, \beta_{j}, q_{j}$ satisfies
    \begin{equation}\label{eq:lanc_recur_compact}
        A_{j}^{*}Q_j=Q_jT_j+\beta _{j+1}q_{j+1}e_{j}^{\top}+\delta Q_j
    \end{equation}
    for $j=1,2,\dots,m$, where $Q_j=\left[ q_1,q_2,\dots ,q_j \right]$, $\delta Q_j=\left[ \delta q_1,\delta q_2,\dots ,\delta q_j \right]$,
    \begin{equation*}
        T_j =
        \begin{pmatrix}
        \alpha_1 & \beta_2 & & & & \\
        \beta_2 & \alpha_2 & \beta_3 & &  &\\
         & \beta_3&\ddots  & \ddots &\\
         & & \ddots & \ddots & \beta_j &\\
         & & & \beta_j& \alpha_j& \\
        \end{pmatrix}.
    \end{equation*}
    The columns of the perturbation $\delta Q_j$ satisfy
    \begin{equation*}
        \norm{\delta q_i} \le L_{gyy}\varepsilon_{j},\text{ for }i=1,2,\dots,j.
    \end{equation*}
    Additionally, if we decompose $Q_j$ as
    \begin{equation}\label{eq:orth_R}
        Q^\top _jQ_j = R_j^\top  + R_j,
    \end{equation}
    with $R_j$ as a strictly upper triangular matrix, then
    \begin{equation}\label{eq:TR_RT}
        T_jR_j-R_jT_j=\beta _{j+1}Q_{j}^{\top}q_{j+1}e_{j}^{\top}+\delta R_j,
    \end{equation}
    where $\delta R_j$ is strictly upper triangular with elements $\abs{\zeta_{st}}\le 2L_{gyy}\varepsilon_j$, for $1\le s<t\le j$.
\end{lemma}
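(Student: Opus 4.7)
The plan has two parts: first establish the perturbed three-term recurrence \eqref{eq:lanc_recur_compact}, and then derive the commutator identity \eqref{eq:TR_RT}.

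For part one, I would start from the dynamic Lanczos update rules \eqref{eq:lanc_u}--\eqref{eq:lanc_q} which, after rearrangement, give the classical three-term form $A_i q_i = \beta_i q_{i-1} + \alpha_i q_i + \beta_{i+1} q_{i+1}$ at each step $i \le j$. The key trick is to rewrite this in terms of the target matrix $A_j^*$ by adding and subtracting: $A_j^* q_i = A_i q_i + (A_j^* - A_i) q_i$, and define the perturbation $\delta q_i := (A_j^* - A_i) q_i$. By the $L_{gyy}$-Lipschitz continuity of $\nabla^2_{yy} g$ (Assumption \ref{assu:g}),
\begin{equation*}
    \|A_j^* - A_i\| \le L_{gyy}\bigl( \|x_j - x_i\| + \|y_j^* - y_i\| \bigr).
\end{equation*}
Inserting $y_i^*$ and applying Lemma \ref{lem:ysmooth} yields $\|y_j^* - y_i\| \le (L_{gx}/\mu_g)\|x_j - x_i\| + \|y_i - y_i^*\|$, so $\|A_j^* - A_i\| \le L_{gyy}\varepsilon_{ji} \le L_{gyy}\varepsilon_j$. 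Since $\|q_i\|=1$, this gives $\|\delta q_i\| \le L_{gyy}\varepsilon_j$, and stacking the columns $i=1,\dots,j$ gives the compact matrix form, with the $\beta_{j+1} q_{j+1} e_j^\top$ term arising from the last column of the recurrence exactly as in the classical Lanczos case.

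For part two, I would left-multiply \eqref{eq:lanc_recur_compact} by $Q_j^\top$ to obtain
\begin{equation*}
    Q_j^\top A_j^* Q_j = Q_j^\top Q_j T_j + \beta_{j+1} Q_j^\top q_{j+1} e_j^\top + Q_j^\top \delta Q_j,
\end{equation*}
then exploit the symmetry of $A_j^*$ by transposing and equating. Using $T_j^\top = T_j$ and the decomposition $Q_j^\top Q_j = R_j^\top + R_j$ produces a commutator equation
\begin{equation*}
    (R_j^\top + R_j) T_j - T_j (R_j^\top + R_j) = \beta_{j+1}\bigl(e_j q_{j+1}^\top Q_j - Q_j^\top q_{j+1} e_j^\top\bigr) + \bigl((\delta Q_j)^\top Q_j - Q_j^\top \delta Q_j\bigr).
\end{equation*}
Taking the strictly upper triangular part of both sides, the $R_j^\top T_j - T_j R_j^\top$ contribution collapses into the lower triangular piece (being the transpose of the upper one), leaving $T_j R_j - R_j T_j$ on the left. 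On the right, $\beta_{j+1} Q_j^\top q_{j+1} e_j^\top$ is already strictly upper triangular because $q_j^\top q_{j+1}=0$ by construction, giving the first term in \eqref{eq:TR_RT}. The remaining piece defines $\delta R_j$, and each of its entries is bounded by $|q_s^\top \delta q_t| + |q_t^\top \delta q_s| \le 2 L_{gyy}\varepsilon_j$ using the column bound from part one together with $\|q_i\|=1$.

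The main obstacle is the bookkeeping around the decomposition $Q_j^\top Q_j = R_j^\top + R_j$ and the careful extraction of the strictly upper triangular part of the commutator, since one must verify that the transposed contributions cancel only on the upper half and that the $(j,j)$ entry of $\beta_{j+1} Q_j^\top q_{j+1} e_j^\top$ indeed vanishes (which is what the local orthogonality $q_j^\top q_{j+1}=0$ built into \eqref{eq:lanc_omega}--\eqref{eq:lanc_q} buys). Once this triangular decomposition is handled, the bound on $\delta R_j$ is a direct consequence of the per-column perturbation estimate established in part one.
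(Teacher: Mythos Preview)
Your proposal is correct and mirrors the paper's proof: the paper likewise rewrites the dynamic recurrence as $A_j^* q_i = \beta_i q_{i-1} + \alpha_i q_i + \beta_{i+1} q_{i+1} + \delta q_i$ with $\delta q_i = (A_j^* - A_i)q_i$ bounded via the Lipschitz constant $L_{gyy}$ and the definition of $\varepsilon_j$, then left-multiplies by $Q_j^\top$, uses symmetry of $A_j^*$ and $T_j$, and extracts the strictly upper triangular part after checking (via the local orthogonality $q_{i}^\top q_{i+1}=0$, which it establishes by induction) that $M_j := T_jR_j - R_jT_j$ is upper triangular with zero diagonal. One cosmetic point: with your sign convention on the commutator identity, the strictly upper triangular part of the left side is $R_jT_j - T_jR_j$ rather than $T_jR_j - R_jT_j$, so you will want to flip the sign (or equivalently start from $T_j(R_j^\top+R_j)-(R_j^\top+R_j)T_j$ as the paper does) before matching with \eqref{eq:TR_RT}.
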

\begin{proof}
    From
    \begin{equation*}
            \alpha _j=q_{j}^{\top}u_j=q_{j}^{\top}A_jq_j-\beta _jq_{j}^{\top}q_{j-1}
    \end{equation*}
    and 
    \begin{equation*}
        q_{j+1}^{\top}q_j=\frac{1}{\beta _{j+1}}\omega _{j}^{\top}q_j=\frac{1}{\beta _{j+1}}\left( u_j-\alpha _jq_j \right) ^\top q_j=\frac{1}{\beta _{j+1}}\left( A_jq_j-\alpha _jq_j-\beta _jq_{j-1} \right) ^\top q_j,
    \end{equation*}
    we can derive 
    \begin{equation}\label{eq:orth_q}
        q^\top _{j+1}q_j=0
    \end{equation}
    by induction. Then, we combine equations \eqref{eq:lanc_u}, \eqref{eq:lanc_alpha}, \eqref{eq:lanc_omega} and \eqref{eq:lanc_q}, and rewrite them in the perturbed form:
    \begin{equation}\label{eq:lanc_recur}
        \beta _{i+1}q_{i+1}=A_iq_i-\beta _iq_{i-1}-\alpha _iq_i=A_{j}^{*}q_i-\beta _iq_{i-1}-\alpha _iq_i+\delta q_i,\ \ for\ i=1,2,\dots,j,
    \end{equation}
    where $\left\| \delta q_i \right\| \le L_{gyy}\varepsilon _j$ due to Assumpstions \ref{assu:g} and \ref{assu:strongg}. Specifically,  \eqref{eq:lanc_recur} can be rewritten in a compact form:
    \begin{equation*}
        A_{j}^{*}Q_j=Q_jT_j+\beta _{j+1}q_{j+1}e_{j}^{\top}+\delta Q_j.
    \end{equation*}
    Then, we consider the orthogonality of matrix $Q_j$, which is reflected by $R_j$ in \eqref{eq:orth_R}.
    Multiply on both sides of \eqref{eq:lanc_recur_compact} by $Q^\top _j$,
    \begin{equation*}
        Q_{j}^{\top}A_{j}^{*}Q_j=Q_{j}^{\top}Q_jT_j+\beta _{j+1}Q_{j}^{\top}q_{j+1}e_{j}^{\top}+Q_{j}^{\top}\delta Q_j.
    \end{equation*}
    Combining its symmetry with the decomposition \eqref{eq:orth_R}, we obtain
    \begin{equation}\label{eq:M_MT}
        T_j\left( R_{j}^{\top}+R_j \right) -\left( R_{j}^{\top}+R_j \right) T_j=\beta _{j+1}\left( Q_{j}^{\top}q_{j+1}e_{j}^{\top}-e_jq_{j+1}^{\top}Q_j \right) +Q_{j}^{\top}\delta Q_j-\delta Q_{j}^{\top}Q_j.
    \end{equation}
    Denote $M_j=T_jR_j-R_jT_j$ which is upper triangular. Since the consecutive $q_i$ is orthogonal as revealed by \eqref{eq:orth_q}, we conclude that the diagonal elements of $M_j$ are $0$. Furthermore, by extracting the upper triangular part of the right hand side of \eqref{eq:M_MT}, we can get
    \begin{equation*}
        M_j=T_jR_j-R_jT_j=\beta _{j+1}Q_{j}^{\top}q_{j+1}e_{j}^{\top}+\delta R_j,
    \end{equation*}
    where $\delta R_j$ is strictly upper triangular with elements $\zeta_{st}$ satisfying: for $t=2,3,\ldots,j$,
    \begin{equation*}
        \left\{ \begin{array}{cl}
        	\zeta _{t-1,t}&=q_{t-1}^{\top}\delta q_t-\delta q_{t-1}^{\top}q_t\\
        	\zeta _{st}&=q_{s}^{\top}\delta q_t-\delta q_{s}^{\top}q_t,\ \ \ \ \quad\ \  s=1,2,\dots ,t-2.\\
        \end{array} \right. 
    \end{equation*}
    From the boundedness of $\norm{\delta q_j}$, it follows that for $1\le s<t\le j$, $\abs{\zeta_{st}}\le 2L_{gyy}\varepsilon_j$.
\end{proof}

\cref{lem:dyn_lanc_decom} illustrates the influence of the dynamics in~\cref{alg:LancBiO} imposed on the three-term Lanczos recurrence, and as \eqref{eq:orth_R} reveals, $R$~reflects the loss of orthogonality of the basis $Q$. However, the following lemmas demonstrate that the range of eigenvalues of the approximate projection matrix $T$ is indeed controllable. 

To proceed, we establish the Ritz pairs of $T_j$ as $\left( \mu _{i}^{\left( j \right)},y_{i}^{\left( j \right)} \right)$ for $i=1,2,\dots,j$, such that
\begin{equation*}
    T_j Y^{(j)}=Y^{(j)} \operatorname{diag}\left(\mu_1^{(j)},\mu_2^{(j)},\ldots,\mu_j^{(j)}\right).
\end{equation*}
where the normalized $\{y^{(j)}_i\}_{i=1}^j$ form the orthogonal matrix $Y^{(j)}$ with the elements $\varsigma^{(j)}_{st}$ for $1\le s,t\le j$, and we arrange the Ritz values in~a specific order,
\begin{equation*}
    \mu_1^{(j)}>\mu_2^{(j)}>\cdots>\mu_j^{(j)}.
\end{equation*}
We define the $j$-th approximate eigenvector matrix
\begin{equation*}
    Z^{\left( j \right)}:=\left[ z_{1}^{\left( j \right)},z_{2}^{\left( j \right)},\dots ,z_{j}^{\left( j \right)} \right] :=Q_jY^{\left( j \right)}.
\end{equation*}
and the corresponding Rayleigh quotients of $A_{j}^{*}$
\begin{equation*}
    \nu^{(j)}_i:=\frac{\left( z_{i}^{\left( j \right)} \right) ^\top A_{j}^{*}z_{i}^{\left( j \right)}}{\left( z_{i}^{\left( j \right)} \right) ^\top z_{i}^{\left( j \right)}},\text{ for}\ i=1,2,\dots,j.
\end{equation*}
The subsequent lemma describes the difference of eigenvalues between $T_j$ and some $T_n$ constructed in preceding steps.
\begin{lemma}\label{lem:mu_diff}
Suppose Assumptions \ref{assu:f} to \ref{assu:strongg} hold. For any eigenpair $\left( \mu _{i}^{\left( j \right)},y_{i}^{\left( j \right)} \right) $ of $T_j$, there exists an integer pair $(s,n)$ where $1\le s\le n<j$, such that
\begin{equation}\label{eq:diff_mu}
    \abs{\mu _{i}^{\left( j \right)}-\mu _{s}^{\left( n \right)}} \le \frac{2j^2L_{gyy}\varepsilon _j}{\sqrt{3}\left| \left( y_{i}^{\left( j \right)} \right) ^\top R_jy_{i}^{\left( j \right)} \right|}.
\end{equation}
\end{lemma}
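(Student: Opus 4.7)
The plan is to leverage the approximate commutator identity \eqref{eq:TR_RT} from \cref{lem:dyn_lanc_decom},
\begin{equation*}
T_j R_j - R_j T_j = \beta_{j+1} Q_j^\top q_{j+1} e_j^\top + \delta R_j,
\end{equation*}
which encodes the loss of orthogonality through the strictly upper triangular $R_j$. The key observation is that this Sylvester-type equation becomes a scalar identity involving eigenvalue differences once projected along a Ritz vector, so that it can be combined with the tridiagonal structure of $T_j$ to expose an embedded eigenvalue problem for a principal submatrix $T_n$.

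First I would apply the relation to $y_i^{(j)}$ and use $T_j y_i^{(j)} = \mu_i^{(j)} y_i^{(j)}$ to obtain
\begin{equation*}
(T_j - \mu_i^{(j)} I)\, R_j y_i^{(j)} = \beta_{j+1} \varsigma_{ji}^{(j)}\, Q_j^\top q_{j+1} + \delta R_j y_i^{(j)}.
\end{equation*}
Write $w := R_j y_i^{(j)}$; strict upper-triangularity of $R_j$ gives $w_j = 0$. Partitioning $T_j$ at an index $n<j$, the first $n$ components of the above identity reduce to
\begin{equation*}
(T_n - \mu_i^{(j)} I)\, w_{1:n} = r_{1:n} - \beta_{n+1} w_{n+1} e_n,
\end{equation*}
where $r$ denotes the right-hand side of the previous display. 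Since $T_n$ is symmetric, this yields the Bauer--Fike-style bound
\begin{equation*}
\min_s \abs{\mu_s^{(n)} - \mu_i^{(j)}} \le \frac{\norm{r_{1:n} - \beta_{n+1} w_{n+1} e_n}}{\norm{w_{1:n}}},
\end{equation*}
which is the structural inequality that the remaining steps must turn into the claimed bound.

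The remaining work is to upper bound the numerator and lower bound the denominator uniformly in the choice of $n$. For the numerator, I would invoke the entrywise bound $\abs{\zeta_{st}} \le 2 L_{gyy}\varepsilon_j$ on $\delta R_j$ together with the approximate orthogonality of $q_{j+1}$ to $Q_j$ (again a consequence of \cref{lem:dyn_lanc_decom}), so that $\norm{\delta R_j y_i^{(j)}}$ and the $\beta_{j+1}$--contribution are each $O(j\, L_{gyy}\varepsilon_j)$, and the boundary term $\beta_{n+1} w_{n+1} e_n$ is absorbed into the same order; summing these over the $O(j)$ components of $w_{1:n}$ yields an overall $O(j^2 L_{gyy}\varepsilon_j)$ bound, where the constant $2/\sqrt{3}$ emerges from the entry bound on $\delta R_j$. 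For the denominator, I would start from the scalar identity $y_i^{(j)\top} R_j y_i^{(j)} = y_i^{(j)\top} w$ and use the strict upper-triangularity of $R_j$ to make a pigeonhole selection of $n$ for which $\norm{w_{1:n}}$ is comparable to $\abs{y_i^{(j)\top} R_j y_i^{(j)}}$; the minimum over $s$ is then taken at the corresponding $n$.

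I expect the main obstacle to be this joint selection of $n$: it must be small enough that the boundary coupling $\beta_{n+1} w_{n+1} e_n$ can be absorbed into the numerator, while simultaneously large enough that the truncation $w_{1:n}$ retains the dominant contribution to the quadratic form $y_i^{(j)\top} R_j y_i^{(j)}$ in the denominator. This is the step at which the tridiagonal structure of $T_j$ interacts most delicately with the strict upper-triangularity of $R_j$ under the dynamic perturbation $\delta R_j$, and it is precisely what distinguishes the dynamic argument from the static Paige analysis.
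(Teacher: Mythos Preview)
Your structural inequality is correct, and the choice $n=j-1$ even makes the boundary term $\beta_{n+1}w_{n+1}e_n$ vanish (since $w_j=0$) while the denominator satisfies $\|w_{1:j-1}\|=\|R_jy_i^{(j)}\|\ge|y_i^{(j)\top}R_jy_i^{(j)}|$ by Cauchy--Schwarz. The real gap is the numerator. You claim that the contribution $\beta_{j+1}\varsigma_{ji}^{(j)}Q_j^\top q_{j+1}$ is $O(jL_{gyy}\varepsilon_j)$ by ``approximate orthogonality of $q_{j+1}$ to $Q_j$, a consequence of \cref{lem:dyn_lanc_decom}.'' But that lemma only gives $q_{j+1}^\top q_j=0$ (consecutive orthogonality, equation~\eqref{eq:orth_q}); it says nothing about $q_{j+1}^\top q_k$ for $k<j$. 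Those inner products are precisely the entries of the $(j{+}1)$-st column of $R_{j+1}$, i.e.\ the very loss of orthogonality the lemma is meant to quantify. In the Paige regime this loss is \emph{not} $O(\varepsilon_j)$; it can be $O(1)$, with the structure tied to Ritz convergence rather than to the perturbation size. Bounding $\|\beta_{j+1}Q_j^\top q_{j+1}\|$ directly by $\varepsilon_j$ is therefore circular, and the Bauer--Fike route stalls here.

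The paper avoids this by never bounding $Q_j^\top q_{j+1}$ in norm. Instead it combines the diagonal case $s=t$ of \eqref{eq:mus_mut} with the tridiagonal identity \eqref{eq:mus_mur} to write each column of $R_j$ as $Q_r^\top q_{r+1}=Y^{(r)}c_r$ with $(c_r)_t=-\epsilon_{tt}^{(r)}/(\beta_{r+1}\varsigma_{rt}^{(r)})$, and then assembles the quadratic form into
\[
(y_i^{(j)})^\top R_j y_i^{(j)}=-\sum_{r=1}^{j-1}(\varsigma_{r+1,i}^{(j)})^2\sum_{t=1}^{r}\frac{\epsilon_{tt}^{(r)}}{\mu_i^{(j)}-\mu_t^{(r)}}.
\]
The potentially large factors $1/\varsigma_{rt}^{(r)}$ cancel via \eqref{eq:mus_mur}, leaving only the eigenvalue gaps in the denominator. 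Factoring out $\min_{1\le d\le l<j}|\mu_i^{(j)}-\mu_d^{(l)}|$ and applying Cauchy--Schwarz together with the Frobenius bound $\|\delta R_j\|_F\le 2jL_{gyy}\varepsilon_j/\sqrt{3}$ (whence the constant $2/\sqrt{3}$) yields the claim. So the missing idea is not a choice of truncation index but this algebraic cancellation, which is exactly what links the loss of orthogonality to the eigenvalue differences rather than to $\varepsilon_j$ directly.
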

\begin{proof}
Multiply the extended eigenvector $y_{r}^{\left( i \right)}$ of $T_i$ by $T_j$, where $i<j$,
\begin{equation}\label{eq:mu_T_diff}
    T_j\left[ \begin{array}{c}
    	y_{r}^{\left( i \right)}\\
    	\mathbf{0}\\
    \end{array} \right] =\left[ \begin{array}{c}
    	\mu _{r}^{\left( i \right)}y_{r}^{\left( i \right)}\\
    	\beta _{i+1}\varsigma _{ir}^{\left( i \right)}e_1\\
    \end{array} \right].
\end{equation}
Then multiply $\left( y_{s}^{\left( j \right)} \right) ^\top $ on the both sides of \eqref{eq:mu_T_diff},
\begin{equation}\label{eq:mus_mur}
    \left( \mu _{s}^{\left( j \right)}-\mu _{r}^{\left( i \right)} \right) \left( y_{s}^{\left( j \right)} \right) ^\top \left[ \begin{array}{c}
	y_{r}^{\left( i \right)}\\
	\mathbf{0}\\
\end{array} \right] =\beta _{i+1}\varsigma _{ir}^{\left( i \right)}\varsigma _{i+1,s}^{\left( j \right)},
\end{equation}
and multiply the eigenvectors $\left( y_{s}^{\left( j \right)} \right) ^\top$ and $y_{t}^{\left( j \right)}$ on the left and right of equation \eqref{eq:TR_RT}, respectively,
\begin{equation}\label{eq:mus_mut}
    \left( \mu _{s}^{\left( j \right)}-\mu _{t}^{\left( j \right)} \right) \left( y_{s}^{\left( j \right)} \right) ^\top R_jy_{t}^{\left( j \right)}=\varsigma _{jt}^{\left( j \right)}\beta _{j+1}\left( z_{s}^{\left( j \right)} \right) ^\top q_{j+1}+\epsilon _{st}^{\left( j \right)},
\end{equation}
where we define
\begin{equation}\label{eq:epsilon}
    \epsilon _{st}^{\left( j \right)}:=\left( y_{s}^{\left( j \right)} \right) ^\top \delta R_jy_{t}^{\left( j \right)}.
\end{equation}
Note that $\left| \epsilon _{st}^{\left( j \right)} \right|\le 2jL_{gyy}\varepsilon _j$ because of \cref{lem:dyn_lanc_decom}. Specifically, taking $s=t$ in \eqref{eq:mus_mut},
\begin{equation}\label{eq:zTq}
    \left( z_{s}^{\left( j \right)} \right) ^\top q_{j+1}=-\frac{\epsilon _{ss}^{\left( j \right)}}{\beta _{j+1}\varsigma _{js}^{\left( j \right)}},
\end{equation}
we can rewrite \eqref{eq:zTq} in the matrix form
\begin{equation}\label{eq:represent_R}
    Q_{r}^{\top}q_{r+1}=Y^{\left( r \right)}c_r,
\end{equation}
where for $r=1,\ldots,s$,
\begin{equation*}
    e_{s}^{\top}c_r:=-\frac{\epsilon _{ss}^{\left( r \right)}}{\beta _{r+1}\varsigma _{rs}^{\left( j \right)}}.
\end{equation*}
By observing that $Q_{r}^{\top}q_{r+1}=Y^{\left( r \right)}c_r$ is the $(r+1)$-th column of $R_j$, we can derive
\begin{align}
    \left( y_{i}^{\left( j \right)} \right) ^\top R_jy_{i}^{\left( j \right)}&=-\sum_{r=1}^{j-1}{\varsigma _{r+1,i}^{\left( j \right)}}\sum_{t=1}^r{\frac{\epsilon _{tt}^{\left( r \right)}}{\beta _{r+1}\varsigma _{rt}^{\left( r \right)}}\left( y_{i}^{\left( j \right)} \right) ^\top \left[ \begin{array}{c}
	y_{t}^{\left( r \right)}\\
	\mathbf{0}\\
    \end{array} \right]}    \label{eq:yRy_1}
    \\
    &=-\sum_{r=1}^{j-1}{\left( \varsigma _{r+1,i}^{\left( j \right)} \right) ^2}\sum_{t=1}^r{\frac{\epsilon _{tt}^{\left( r \right)}}{\mu _{i}^{\left( j \right)}-\mu _{t}^{\left( r \right)}}},     \label{eq:yRy_2}
\end{align}
where \eqref{eq:yRy_1} and \eqref{eq:yRy_2} follow from \eqref{eq:represent_R} and \eqref{eq:mus_mur} respectively. Consequently, the definition \eqref{eq:epsilon} reveals
\begin{equation}\label{eq:sum_eps}
    \sum_{s,t=1}^j{\left( \epsilon _{st}^{\left( j \right)} \right) ^2}=\left\| \delta R_j \right\| _{F}^{2}\le 2j^2L_{gyy}\varepsilon _j.
\end{equation}
Based on \eqref{eq:yRy_2} and the orthogonality of $Y^{(j)}$,
\begin{align*}
    \left| \left( y_{i}^{\left( j \right)} \right) ^\top R_jy_{i}^{\left( j \right)} \right|&\le \left| \sum_{r=1}^{j-1}{\varsigma _{r+1,i}^{\left( j \right)}}\sum_{t=1}^r{\frac{\epsilon _{tt}^{\left( r \right)}}{\min\limits_{1\le d\le l <j}\abs{\mu _{i}^{\left( j \right)}-\mu _{d}^{\left( l \right)}}}} \right|
    \\
    &\le \frac{1}{\min\limits_{1\le d\le l <j}\abs{\mu _{i}^{\left( j \right)}-\mu _{d}^{\left( l \right)}}}\left| \sum_{r=1}^{j-1}\sum_{t=1}^r{\varsigma _{r+1,i}^{\left( j \right)}\epsilon _{tt}^{\left( r\right)}} \right|,
\end{align*}
it follows from the Cauchy--Schwarz inequality and \eqref{eq:sum_eps} that
\begin{equation*}
    \min_{1\le d\le l <j}\abs{\mu _{i}^{\left( j \right)}-\mu _{d}^{\left( l \right)}} \le \frac{\sqrt{j}\sqrt{\sum_{r=1}^{j-1}{\sum_{t=1}^r{\left( \epsilon _{tt}^{\left( r \right)} \right) ^2}}}}{\left| \left( y_{i}^{\left( j \right)} \right) ^\top R_jy_{i}^{\left( j \right)} \right|}\,\le \frac{2j^2L_{gyy}\varepsilon _j}{\sqrt{3}\left| \left( y_{i}^{\left( j \right)} \right) ^\top R_jy_{i}^{\left( j \right)} \right|}.
\end{equation*}
\end{proof}

\begin{lemma}\label{lem:T_bound}
    Suppose Assumptions \ref{assu:f} to \ref{assu:strongg} hold. $Q_j$ and $T_j$ are the basis matrix and the approximate tridiagonal matrix in the $j$-th step and $R_j$ is the strictly upper triangular matrix defined in \eqref{eq:orth_R} characterizing the orthogonality of $Q_j$.  
    Given $\mu _{i}^{\left( j \right)}$ the $i$-th eigenvalue of $T_j$, then for $i=1,\ldots,j$,
    \begin{equation}\label{eq:T_bound}
        \mu _g-\frac{2\sqrt{3}}{3}\left( j+1 \right) ^3L_{gyy}\varepsilon _j\le \mu^{(j)}_i\le L_{gy}+\frac{2\sqrt{3}}{3}\left( j+1 \right) ^3L_{gyy}\varepsilon _j.
    \end{equation}
\end{lemma}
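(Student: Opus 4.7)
The plan is to prove the bound by induction on $j$. For the base case $j=1$, the initialization $\beta_1 = 0$ and $q_0 = \mathbf{0}$ force $T_1 = (\alpha_1) = (q_1^\top A_1 q_1)$, which lies in $[\mu_g, L_{gy}]$ because $A_1 = \nabla_{yy}^2 g(x_1, y_1)$ is positive definite with spectrum in $[\mu_g, L_{gy}]$ by Assumptions~\ref{assu:g} and~\ref{assu:strongg}. This is trivially inside the claimed interval for $j = 1$.

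For the inductive step $j \ge 2$, I would split based on the size of $|(y_i^{(j)})^\top R_j y_i^{(j)}|$, which measures how severely $Q_j$ departs from orthonormality in the direction $y_i^{(j)}$. In the regime $|(y_i^{(j)})^\top R_j y_i^{(j)}| \ge 1/3$, I invoke \cref{lem:mu_diff} directly: there exists $(s,n)$ with $n < j$ and
\begin{equation*}
    |\mu_i^{(j)} - \mu_s^{(n)}| \le \frac{2 j^2 L_{gyy}\varepsilon_j}{\sqrt{3}\cdot 1/3} = 2\sqrt{3}\, j^2 L_{gyy}\varepsilon_j.
\end{equation*}
The inductive hypothesis, combined with $\varepsilon_n \le \varepsilon_j$ and $(n+1)^3 \le j^3$, gives $\mu_s^{(n)} \in [\mu_g - \frac{2\sqrt{3}}{3}j^3 L_{gyy}\varepsilon_j,\, L_{gy} + \frac{2\sqrt{3}}{3}j^3 L_{gyy}\varepsilon_j]$, and the elementary identity $(j+1)^3 - j^3 = 3j^2+3j+1 \ge 3j^2$ supplies the needed slack $\frac{2\sqrt{3}}{3}\bigl[(j+1)^3 - j^3\bigr] L_{gyy}\varepsilon_j \ge 2\sqrt{3}j^2 L_{gyy}\varepsilon_j$ to close the induction in this regime.

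In the complementary regime $|(y_i^{(j)})^\top R_j y_i^{(j)}| < 1/3$, we have $\|z_i^{(j)}\|^2 = 1 + 2(y_i^{(j)})^\top R_j y_i^{(j)} \ge 1/3$, so the Rayleigh quotient $\nu_i^{(j)}$ is well-defined. Multiplying the three-term recurrence~\eqref{eq:lanc_recur_compact} on the right by $y_i^{(j)}$ and on the left by $(z_i^{(j)})^\top$, and using \eqref{eq:zTq} to identify $\beta_{j+1}\varsigma_{ji}^{(j)}(z_i^{(j)})^\top q_{j+1} = -\epsilon_{ii}^{(j)}$, yields
\begin{equation*}
    \mu_i^{(j)} = \nu_i^{(j)} + \frac{\epsilon_{ii}^{(j)} - (z_i^{(j)})^\top \delta Q_j\, y_i^{(j)}}{\|z_i^{(j)}\|^2}.
\end{equation*}
Since $A_j^*$ is $\mu_g$-strongly convex and $L_{gy}$-smooth, $\nu_i^{(j)} \in [\mu_g, L_{gy}]$. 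The numerator is bounded by $|\epsilon_{ii}^{(j)}| \le \|\delta R_j\|_F \le \sqrt{2}\,j L_{gyy}\varepsilon_j$ (from the entrywise bound $|\zeta_{st}| \le 2L_{gyy}\varepsilon_j$ in \cref{lem:dyn_lanc_decom}) and $|(z_i^{(j)})^\top \delta Q_j y_i^{(j)}| \le \|z_i^{(j)}\|\cdot \|\delta Q_j\|_F \le \sqrt{5/3}\,\sqrt{j} L_{gyy}\varepsilon_j$. Dividing by $\|z_i^{(j)}\|^2 \ge 1/3$ gives $|\mu_i^{(j)} - \nu_i^{(j)}| = O(j L_{gyy}\varepsilon_j)$, which for $j \ge 2$ is comfortably absorbed by $E_j = \frac{2\sqrt{3}}{3}(j+1)^3 L_{gyy}\varepsilon_j = \Theta(j^3 L_{gyy}\varepsilon_j)$.

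The main obstacle I anticipate is the constant bookkeeping in Case~A: the threshold $1/3$ for the split and the cubic growth rate $(j+1)^3$ in $E_j$ have to be compatible, and verifying that the telescoping gap $E_j - E_{j-1}$ exactly dominates the error inflated by the small denominator $|(y_i^{(j)})^\top R_j y_i^{(j)}|$ requires tracking every constant through \cref{lem:mu_diff}. The rest—extracting the Rayleigh-quotient identity in Case~B and bounding $\|\delta R_j\|_F$, $\|\delta Q_j\|_F$—is routine given the infrastructure already built in \cref{lem:dyn_lanc_decom} and in the proof of \cref{lem:mu_diff}.
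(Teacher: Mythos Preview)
Your proposal is correct and follows essentially the same approach as the paper: both arguments split on the size of $|(y_i^{(j)})^\top R_j y_i^{(j)}|$, handle the well-conditioned case via the Rayleigh-quotient identity derived from \eqref{eq:lanc_recur_compact} and \eqref{eq:zTq}, and in the ill-conditioned case invoke \cref{lem:mu_diff} to pass to a smaller index. The only cosmetic difference is that you package the descent as strong induction on $j$ (using the telescoping gap $(j+1)^3-j^3\ge 3j^2$), whereas the paper unrolls this into an explicit chain $j=n_0>n_1>\cdots>n_{l+1}$ terminating in Case~I and bounds $\sum_t n_t^2$ directly; the thresholds ($1/3$ versus $3/8-\varepsilon_j/2$) also differ slightly but both choices close the argument.
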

\begin{proof}
By conducting the left multiplication with $\left( y_{i}^{\left( j \right)}Q_{j}\right)^{\top}$ and the right multiplication with $y_{i}^{\left( j \right)}$ on both sides of equation~\eqref{eq:lanc_recur_compact}, we obtain the following equation,
\begin{equation*}
    \left( z_{i}^{\left( j \right)} \right) ^\top A_{j}^{*}z_{i}^{\left( j \right)}-\mu _{i}^{\left( j \right)}\left( z_{i}^{\left( j \right)} \right) ^\top z_{i}^{\left( j \right)}=-\epsilon _{ii}^{\left( j \right)}+\left( z_{i}^{\left( j \right)} \right) ^\top \delta Q_jy_{i}^{\left( j \right)}.
\end{equation*}
Dividing it by $\left( z_{i}^{\left( j \right)} \right) ^\top z_{i}^{\left( j \right)}$, we have
\begin{align}
    \left| \frac{\left( z_{i}^{\left( j \right)} \right) ^\top A_{j}^{*}z_{i}^{\left( j \right)}-\mu _{i}^{\left( j \right)}\left( z_{i}^{\left( j \right)} \right) ^\top z_{i}^{\left( j \right)}}{\left( z_{i}^{\left( j \right)} \right) ^\top z_{i}^{\left( j \right)}} \right|&\le \frac{\left| \epsilon _{ii}^{\left( j \right)} \right|+\left| \left( z_{i}^{\left( j \right)} \right) ^\top \delta Q_jy_{i}^{\left( j \right)} \right|}{\left| 1+2\left( y_{i}^{\left( j \right)} \right) ^\top R_jy_{i}^{\left( j \right)} \right|}    \nonumber
    \\
    &\le \frac{\left| \epsilon _{ii}^{\left( j \right)} \right|+L_{gyy}\varepsilon _j\sqrt{j\left( \left| 1+2\left( y_{i}^{\left( j \right)} \right) ^\top R_jy_{i}^{\left( j \right)} \right| \right)}}{\left| 1+2\left( y_{i}^{\left( j \right)} \right) ^\top R_jy_{i}^{\left( j \right)} \right|},    \label{eq:mu_quo_diff}
\end{align}
where the inequalities come from
\begin{equation}\label{eq:z_yRy}
    \left\| z_{i}^{\left( j \right)} \right\| ^2=1+2\left( y_{i}^{\left( j \right)} \right) ^\top R_jy_{i}^{\left( j \right)}.
\end{equation}

{\bfseries Case I:} If
\begin{equation}\label{eq:yRy_condition}
    \abs{\left( y_{i}^{\left( j \right)} \right) ^\top R_jy_{i}^{\left( j \right)}}<\frac{3}{8}-\frac{\varepsilon _j}{2}
\end{equation}
holds, then from \eqref{eq:z_yRy}, 
\begin{equation*}
    \left\| z_{i}^{\left( j \right)} \right\| \ge \frac{1}{2}.
\end{equation*}
Furthermore, \eqref{eq:mu_quo_diff} reveals that there exists~a Rayleigh quotient $\nu^{(j)}$ of $A^*_j$ that satisfies
\begin{equation*}
    \left| \nu ^{\left( j \right)}-\mu _{i}^{\left( j \right)} \right|\le 2\left| \epsilon _{ii}^{\left( j \right)} \right|+\sqrt{2j}L_{gyy}\varepsilon _j\le \left( 4j+\sqrt{2j} \right) L_{gyy}\varepsilon _j.
\end{equation*}

{\bfseries Case II:} If the condition \eqref{eq:yRy_condition} does not hold, by applying \cref{lem:mu_diff}, we can find an integer pair $(s_1,n_1)$ with $1\le s_1\le n_1<j$ such that 
\begin{equation*}
\left| \mu _{i}^{\left( j \right)}-\mu _{s_1}^{\left( n_1 \right)} \right|\le \frac{2j^2L_{gyy}\varepsilon _j}{\sqrt{3}\left| \left( y_{i}^{\left( j \right)} \right) ^\top R_jy_{i}^{\left( j \right)} \right|}\le 2\sqrt{3}j^2L_{gyy}\varepsilon _j.
\end{equation*}
By observing that $\left( \mu _{s_1}^{\left( n_1 \right)},y_{s_1}^{\left( n_1 \right)} \right) $ and $\left( y_{s_1}^{\left( n_1 \right)} \right) ^\top R_{n_1}y_{s_1}^{\left( n_1 \right)}$ can also be categorized into {\bfseries Case I} or {\bfseries Case II}, we can repeat this process and construct a sequence $\{(s_t,n_t)\}_{t=0}^{l+1}$ with $1\le n_{l+1}<n_l<\cdots <n_1<n_0=j$ until
\begin{equation*}
    \abs{\left( y_{s_{l+1}}^{\left( n_{l+1} \right)} \right) ^\top R_jy_{s_{l+1}}^{\left( n_{l+1} \right)}}<\frac{3}{8}-\frac{\varepsilon _{n_{l+1}}}{2}.
\end{equation*}
Inequality~\eqref{eq:yRy_condition} holds when the superscript $j=1$ since~$T_1=[\alpha_1],\ y_1^{(1)}=1$~and $z^{(1)}_1=q_1$. Therefore, we can obtain $\{(s_t,n_t)\}_{t=0}^{l+1}$ in finite steps, resulting in the following estimate.
\begin{equation*}
    \begin{aligned}
        \left| \bar{\nu}^{\left( j \right)}-\mu _{i}^{\left( j \right)} \right|&\le \left| \bar{\nu}^{\left( j \right)}-\mu _{s_{l+1}}^{\left( n_{l+1} \right)} \right|+\sum_{t=0}^l{\left| \mu _{s_{t+1}}^{\left( n_{t+1} \right)}-\mu _{s_t}^{\left( n_l \right)} \right|}
        \\
        &\le \left( 4j+\sqrt{2j} \right) L_{gyy}\varepsilon _j+\sum_{t=0}^l{2\sqrt{3}n_{t}^{2}L_{gyy}\varepsilon _j}
        \\
        &\le \frac{2\sqrt{3}}{3}\left( j+1 \right) ^3L_{gyy}\varepsilon _j,
    \end{aligned}
\end{equation*}
for some $\bar{\nu}^{\left( j \right)}$.

Any Rayleigh quotient of $A_j^*$ is bounded by its eigenvalues \citep{parlett1998symmetric}, i.e., for any $\nu^{(j)}$, 
\begin{equation*}
    \lambda_{min}^{(j)}\le \nu^{(j)}\le \lambda_{max}^{(j)},
\end{equation*}
where $\lambda_{min}^{(j)}$ and $\lambda_{max}^{(j)}$ are the minimum and maximal eigenvalue of $A_j^*$, respectively. Based on \cref{assu:g} and \cref{assu:strongg}, we complete the proof.
\end{proof}

\section{Proof of \cref{lem:descent_res}}\label{sec:proof312}
We detail the proof for \cref{lem:descent_res} in this section, starting with a proof sketch.
\subsection{Proof sketch}
The proof of \cref{lem:descent_res} is structured by four steps.
\paragraph{Step1: extending $\varepsilon_j$ to $\tilde{\varepsilon}_j$ in the lemmas given in  \cref{sec:proof_lanc}.}
 \hspace*{\fill} \\
In  \cref{sec:proof_lanc}, we adopt $A^*_j$ and $b^*_j$ as reference values with each epoch for the analysis, i.e., for $j=1,2,\ldots,m$,
\begin{align*}
    A_j^*:=\nabla_{y y}^2 g\left(x_j, y_j^*\right),\ b_j^*:=\nabla_y f\left(x_j, y_j^*\right),\  \bar{b}:=b_1-A_1 \bar{v},\ \bar{r}_0:=\bar{b},\ \bar{r}_j:=\bar{b}-A_j^* \Delta v_j.
\end{align*}
and
\begin{align*}
    \varepsilon_{j}:=\max_{1\le s,t\le j}\left(1+\frac{L_{g x}}{\mu_g}\right)\left\|x_{m h+s}-x_{m h+t}\right\|+\left\|y_{m h+s}-y_{m h+s}^*\right\|.
\end{align*}
In parallel, we can also view the $A_1,b_1$ as reference values. In this way, denote the similar quantities
\begin{align}\label{eq:residual_prime}
    A_j:=\nabla_{y y}^2 g\left(x_j, y_j\right),\ b_j:=\nabla_y f\left(x_j, y_j\right),\  \bar{b}^\prime:=b_1-A_1 \bar{v},\ \bar{r}^\prime_0:=\bar{b}^\prime,\ \bar{r}^\prime_j:=\bar{b}^\prime-A_1 \Delta v_j.
\end{align}
and
\begin{align}\label{eq:tildevarepsilon}
    \tilde{\varepsilon}_{j}:=\max_{1\le s,t\le j}\left\|x_{m h+s}-x_{m h+t}\right\|+\left\|y_{m h+s}-y_{m h+t}\right\|.
\end{align}
Consequently, we extend the lemmas in \cref{sec:proof_lanc}. The results listed in \cref{sec:extendF} are the recipe of Step2.
\paragraph{Step2: upper-bounding the residual $\norm{\bar{r}_j^\prime}$.}
 \hspace*{\fill} \\
In \cref{sec:proofstep2}, we then demonstrate that if the value of $\tilde{\varepsilon}_j$ is not too large, $\norm{\bar{r}_j^\prime}$ can be bounded, which is an important lemma for Step3.
\paragraph{Step3: controlling $\tilde{\varepsilon}_j$ and ${\varepsilon}_j$ by induction.}
\hspace*{\fill} \\
Since the expression of $\tilde{\varepsilon}_j$ does not involve $y^*$, its magnitude can be controlled by adjusting the step size, implied by \eqref{eq:tildevarepsilon}. With the help of the stability of $\tilde{\varepsilon}_j$, we can prove
\begin{equation}\label{eq:bound_eps}
\mu _g-\frac{2\sqrt{3}}{3}\left( j+1 \right) ^3L_{gyy}{\varepsilon} _j>0.
\end{equation}
\paragraph{Step4: proof of \cref{lem:descent_res}}
\hspace*{\fill} \\
Based on the conclusion \eqref{eq:bound_eps} revealing the benign property of the dynamic process, we achieve the proof of \cref{lem:descent_res}.

\subsection{Extended lemmas from \cref{sec:proof_lanc}}\label{sec:extendF}
In this part, we view the $A_1,b_1$ as reference values. In this way, we can institute $A_j^*$ with $A_1$ and $\varepsilon_j$ with $\tilde{\varepsilon}_j$ in the lemmas from  \cref{sec:proof_lanc}, resulting in the extended version of lemmas.
\begin{lemma}\label{lem:prime_dyn_lanc_decom}
        (Extended version of \cref{lem:dyn_lanc_decom}) Suppose Assumptions \ref{assu:f} to \ref{assu:strongg} hold. The dynamic Lanczos process in \cref{alg:LancBiO} with normalized $q_1$ satisfies
    \begin{equation}\label{eq:lanc_recur_compact_ext}
        A_{1}Q_j=Q_jT_j+\beta _{j+1}q_{j+1}e_{j}^{\top}+\delta Q^\prime_j
    \end{equation}
    for $j=1,2,\dots,m$, where $Q_j=\left[ q_1,q_2,\dots ,q_j \right]$, $\delta Q^\prime_j=\left[ \delta q^\prime_1,\delta q^\prime_2,\dots ,\delta q^\prime_j \right]$,
    \begin{equation*}
        T_j =
        \begin{pmatrix}
        \alpha_1 & \beta_2 & & & & \\
        \beta_2 & \alpha_2 & \beta_3 & &  &\\
         & \beta_3&\ddots  & \ddots &\\
         & & \ddots & \ddots & \beta_j &\\
         & & & \beta_j& \alpha_j& \\
        \end{pmatrix}.
    \end{equation*}
    The columns of the perturbation $\delta Q^\prime_j$ satisfy
    \begin{equation}\label{eq:bound_deltaq_ext}
        \norm{\delta q^\prime_i} \le L_{gyy}\tilde{\varepsilon}_{j},\text{ for }i=1,2,\dots,j.
    \end{equation}
\end{lemma}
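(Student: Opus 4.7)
The plan is to mirror the argument used for \cref{lem:dyn_lanc_decom}, but with $A_1$ playing the role previously played by $A_j^*$, and with $\tilde{\varepsilon}_j$ replacing $\varepsilon_j$. First, I would start from the raw dynamic Lanczos update obtained by combining \eqref{eq:lanc_u}, \eqref{eq:lanc_alpha}, \eqref{eq:lanc_omega}, and \eqref{eq:lanc_q}, which gives the exact recurrence
\begin{equation*}
\beta_{i+1} q_{i+1} = A_i q_i - \beta_i q_{i-1} - \alpha_i q_i, \qquad i = 1, 2, \ldots, j.
\end{equation*}
The goal is to re-express this identity in terms of $A_1$ rather than $A_i$. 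Adding and subtracting $A_1 q_i$ yields
\begin{equation*}
\beta_{i+1} q_{i+1} = A_1 q_i - \beta_i q_{i-1} - \alpha_i q_i + (A_i - A_1) q_i,
\end{equation*}
so that defining the perturbation columns $\delta q^\prime_i := (A_1 - A_i) q_i$ and rearranging gives exactly the three-term form $A_1 q_i = \beta_i q_{i-1} + \alpha_i q_i + \beta_{i+1} q_{i+1} + \delta q^\prime_i$. Stacking these component-wise identities across $i=1,\ldots,j$ then produces the compact matrix relation \eqref{eq:lanc_recur_compact_ext}.

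Next, I would bound $\|\delta q^\prime_i\|$ using the Lipschitz continuity of $\nabla_{yy}^2 g$ asserted in \cref{assu:g}. Since $A_i = \nabla_{yy}^2 g(x_i, y_i)$ and $A_1 = \nabla_{yy}^2 g(x_1, y_1)$, the Lipschitz property gives
\begin{equation*}
\|A_i - A_1\| \leq L_{gyy}\bigl(\|x_i - x_1\| + \|y_i - y_1\|\bigr) \leq L_{gyy}\, \tilde{\varepsilon}_j,
\end{equation*}
where the final inequality invokes the definition \eqref{eq:tildevarepsilon} of $\tilde{\varepsilon}_j$. Because each $q_i$ produced by the Lanczos recurrence is unit norm (from the normalization in \eqref{eq:beta}--\eqref{eq:lanc_q}), we conclude that $\|\delta q^\prime_i\| \leq L_{gyy}\tilde{\varepsilon}_j$, which is exactly the claimed bound \eqref{eq:bound_deltaq_ext}.

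There is no essential new obstacle compared to the proof of \cref{lem:dyn_lanc_decom}: the only change of substance is that the reference Hessian is now a fixed iterate $A_1$ rather than the (epoch-dependent) $A_j^*$, so the Lipschitz comparison is applied between $(x_i, y_i)$ and $(x_1, y_1)$ instead of between $(x_j, y_j)$ and $(x_j, y_j^*)$. This in turn is precisely the reason the definition of $\tilde{\varepsilon}_j$ in \eqref{eq:tildevarepsilon} drops the $y^*$-dependence and replaces it with $\|y_{mh+s}-y_{mh+t}\|$. The bookkeeping of the tridiagonal matrix $T_j$ (with entries $\alpha_i, \beta_i$) and of the residual column $\beta_{j+1} q_{j+1} e_j^\top$ is untouched from the original argument, since those quantities come from the same scalar identities \eqref{eq:lanc_alpha}--\eqref{eq:beta}. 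Thus the proof is a direct, almost mechanical adaptation of the one given for \cref{lem:dyn_lanc_decom}.
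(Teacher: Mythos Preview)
Your proposal is correct and matches the paper's approach exactly: the paper does not give a separate proof for this lemma but simply states that one substitutes $A_1$ for $A_j^*$ and $\tilde{\varepsilon}_j$ for $\varepsilon_j$ in the argument for \cref{lem:dyn_lanc_decom}, which is precisely the mechanical adaptation you describe.
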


\begin{lemma}\label{lem:prime_T_bound}
    (Extended version of \cref{lem:T_bound}) Suppose Assumptions \ref{assu:f} to \ref{assu:strongg} hold. $Q_j$ and $T_j$ are the basis matrix and the approximate tridiagonal matrix in the $j$-th step. Take $\mu _{i}^{\left( j \right)}$ as the $i$-th eigenvalue of $T_j$, then for $i=1,\ldots,j$,
    \begin{equation}\label{eq:T_bound_ext}
        \mu _g-\frac{2\sqrt{3}}{3}\left( j+1 \right) ^3L_{gyy}\tilde{\varepsilon} _j\le \mu^{(j)}_i\le L_{gy}+\frac{2\sqrt{3}}{3}\left( j+1 \right) ^3L_{gyy}\tilde{\varepsilon} _j.
    \end{equation}
\end{lemma}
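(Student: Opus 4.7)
The plan is to mirror the proof of \cref{lem:T_bound} step by step, since \cref{lem:prime_T_bound} merely substitutes the reference matrix $A_j^*$ by $A_1$ and the difference parameter $\varepsilon_j$ by $\tilde{\varepsilon}_j$ throughout. The key input is the extended three-term recurrence supplied by \cref{lem:prime_dyn_lanc_decom}, namely $A_1 Q_j = Q_j T_j + \beta_{j+1} q_{j+1} e_j^\top + \delta Q^\prime_j$ with $\|\delta q^\prime_i\| \le L_{gyy}\tilde{\varepsilon}_j$, which plays exactly the same role as the $A_j^*$-based recurrence did in the original argument.

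First I would left-multiply the extended recurrence by $(Q_j y_i^{(j)})^\top = (z_i^{(j)})^\top$ and right-multiply by $y_i^{(j)}$, producing
\begin{equation*}
(z_i^{(j)})^\top A_1 z_i^{(j)} - \mu_i^{(j)} (z_i^{(j)})^\top z_i^{(j)} = -\epsilon_{ii}^{(j)} + (z_i^{(j)})^\top \delta Q^\prime_j y_i^{(j)}.
\end{equation*}
Dividing by $(z_i^{(j)})^\top z_i^{(j)} = 1 + 2 (y_i^{(j)})^\top R_j y_i^{(j)}$ converts the left-hand side into the gap between $\mu_i^{(j)}$ and the Rayleigh quotient of $A_1$ evaluated at $z_i^{(j)}$, with the right-hand side controlled by $\|\delta q^\prime_i\| \le L_{gyy}\tilde{\varepsilon}_j$ and by $|\epsilon_{ii}^{(j)}|$.

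Next I would rerun the two-case analysis from the proof of \cref{lem:T_bound}. In Case I, whenever $|(y_i^{(j)})^\top R_j y_i^{(j)}|$ is bounded away from $1/2$ (concretely $< 3/8 - \tilde{\varepsilon}_j/2$), the norm $\|z_i^{(j)}\| \ge 1/2$ is controlled and the identity above immediately gives $|\nu^{(j)} - \mu_i^{(j)}| \le (4j+\sqrt{2j})L_{gyy}\tilde{\varepsilon}_j$ for some Rayleigh quotient $\nu^{(j)}$ of $A_1$. In Case II, I would appeal to the extended analogue of \cref{lem:mu_diff} obtained by substituting $A_j^* \to A_1$ and $\varepsilon_j \to \tilde{\varepsilon}_j$ in its proof (that derivation manipulates only the recurrence structure, $R_j$, and the bounds on $\delta q^\prime_i$, all of which transfer verbatim under the substitution), locating an earlier Ritz pair $(\mu_{s_1}^{(n_1)}, y_{s_1}^{(n_1)})$ with $|\mu_i^{(j)} - \mu_{s_1}^{(n_1)}| \le 2\sqrt{3}\, j^2 L_{gyy}\tilde{\varepsilon}_j$. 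Iterating the dichotomy for at most $j$ rounds produces a finite chain $\{(s_t,n_t)\}_{t=0}^{l+1}$ whose final element lies in Case I, and telescoping the per-step bounds gives $|\bar\nu^{(j)} - \mu_i^{(j)}| \le \frac{2\sqrt{3}}{3}(j+1)^3 L_{gyy}\tilde{\varepsilon}_j$ for some Rayleigh quotient $\bar\nu^{(j)}$ of $A_1$.

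Finally, since $A_1 = \nabla_{yy}^2 g(x_1, y_1)$ is symmetric and, under \cref{assu:g} and \cref{assu:strongg}, its spectrum lies in $[\mu_g, L_{gy}]$, any Rayleigh quotient satisfies $\mu_g \le \bar\nu^{(j)} \le L_{gy}$, which combined with the perturbation estimate yields the advertised bound on $\mu_i^{(j)}$. The main obstacle is the bookkeeping for the extended \cref{lem:mu_diff}: one must re-verify that replacing $A_j^*$ with the single matrix $A_1$ does not break the identity $(\mu_s^{(j)}-\mu_t^{(j)})(y_s^{(j)})^\top R_j y_t^{(j)} = \varsigma_{jt}^{(j)} \beta_{j+1}(z_s^{(j)})^\top q_{j+1} + \epsilon_{st}^{(j)}$; however, since this identity rests solely on the symmetry of the projected matrix and on the recurrence, the substitution is transparent and the only arithmetic change is $\|\delta q_i\| \le L_{gyy}\tilde{\varepsilon}_j$ in every perturbation estimate, so the eigenvalue bound follows with $\tilde{\varepsilon}_j$ in place of $\varepsilon_j$.
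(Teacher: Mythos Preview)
Your proposal is correct and takes essentially the same approach as the paper. The paper does not give a separate proof for \cref{lem:prime_T_bound}; it simply states that the extended lemma follows by replacing the reference matrix $A_j^*$ with $A_1$ and the parameter $\varepsilon_j$ with $\tilde{\varepsilon}_j$ throughout the proof of \cref{lem:T_bound}, which is exactly the substitution argument you carry out in detail.
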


\subsection{Proof of Step2}\label{sec:proofstep2}
The following lemma demonstrates that if the value of $\tilde{\varepsilon}_j$ is not too large, $\norm{\bar{r}_j^\prime}$ can be bounded.
\begin{lemma}\label{lem:vdescent}
Suppose Assumpsions \ref{assu:f} to \ref{assu:strongg} and
\begin{equation*}
\mu _g-\frac{2\sqrt{3}}{3}\left( j+1 \right) ^3L_{gyy}\tilde{\varepsilon} _j>0
\end{equation*}
are satisfied  within an epoch. Then, it holds that
\begin{equation*}
\frac{\left\| \bar{r}^\prime_j \right\|}{\left\| \bar{r}^\prime_0 \right\|}\le 2\sqrt{\tilde{\kappa}^\prime\left( j \right)}\left( \frac{\sqrt{\tilde{\kappa}^\prime\left( j \right)}-1}{\sqrt{\tilde{\kappa}^\prime\left( j \right)}+1} \right) ^j+\sqrt{j}L_{gyy}\varepsilon _j\tilde{\kappa}^\prime\left( j \right),
\end{equation*}
where 
$
    \tilde{\kappa}^\prime\left( j \right) :=\frac{L_{gy}+\frac{2\sqrt{3}}{3}\left( j+1 \right) ^3L_{gyy}\tilde{\varepsilon} _j}{\mu _g-\frac{2\sqrt{3}}{3}\left( j+1 \right) ^3L_{gyy}\tilde{\varepsilon}_j}.
$
\end{lemma}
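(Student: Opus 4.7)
The plan is to express $\bar{r}_j^\prime$ as a sum of (i) the residual that an \emph{exact} Lanczos/Galerkin projection would produce for a nearby symmetric positive definite operator and (ii) a perturbation term introduced by the dynamic nature of the process, then bound each piece separately. The starting point is the perturbed three-term recurrence from Lemma~\ref{lem:prime_dyn_lanc_decom},
\[
A_1 Q_j \;=\; Q_j T_j + \beta_{j+1} q_{j+1} e_j^\top + \delta Q_j^\prime,\qquad \|\delta q_i^\prime\|\le L_{gyy}\tilde{\varepsilon}_j,
\]
combined with Lemma~\ref{lem:prime_T_bound}, which under the working hypothesis $\mu_g-\tfrac{2\sqrt{3}}{3}(j+1)^3 L_{gyy}\tilde{\varepsilon}_j>0$ confines the spectrum of $T_j$ to $[\mu_g-\tfrac{2\sqrt{3}}{3}(j+1)^3 L_{gyy}\tilde{\varepsilon}_j,\,L_{gy}+\tfrac{2\sqrt{3}}{3}(j+1)^3 L_{gyy}\tilde{\varepsilon}_j]$, so that $T_j$ is symmetric positive definite with $2$-norm condition number at most $\tilde{\kappa}^\prime(j)$. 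Writing $\Delta v_j = Q_j c_j$ with $c_j:=T_j^{-1}Q_j^\top\bar{b}^\prime$ and recalling $q_1=\bar{b}^\prime/\|\bar{b}^\prime\|$, substitution into the recurrence yields
\[
\bar{r}_j^\prime \;=\; \bar{b}^\prime - Q_j Q_j^\top\bar{b}^\prime - \beta_{j+1}(e_j^\top c_j)q_{j+1} \;-\; \delta Q_j^\prime c_j.
\]

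The first three terms on the right are precisely the residual produced by an exact Galerkin projection of a nearby operator $\tilde{A}$ (defined so that its projection onto $\operatorname{span}(Q_j)$ equals $T_j$) onto $\operatorname{span}(Q_j)$. Consequently $\Delta v_j$ is the exact Galerkin iterate for the SPD system $\tilde{A}\Delta v=\bar{b}^\prime$, and its residual admits a polynomial representation of the form $p_j(\tilde{A})\bar{b}^\prime$, where $p_j$ is of degree at most $j$ with $p_j(0)=1$. Choosing $p_j$ as the shifted/scaled Chebyshev polynomial on the interval containing $\operatorname{spec}(\tilde{A})$ yields the standard CG residual estimate
\[
2\sqrt{\tilde{\kappa}^\prime(j)}\Bigl(\tfrac{\sqrt{\tilde{\kappa}^\prime(j)}-1}{\sqrt{\tilde{\kappa}^\prime(j)}+1}\Bigr)^j\|\bar{r}_0^\prime\|,
\]
where the prefactor $\sqrt{\tilde{\kappa}^\prime(j)}$ arises from converting between the $\tilde{A}$-energy norm, in which CG is optimal, and the $2$-norm of the claim.

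For the remaining term I would use $\|\delta Q_j^\prime\|_F\le\sqrt{j}\,L_{gyy}\tilde{\varepsilon}_j$ together with $\|c_j\|\le\|T_j^{-1}\|\,\|Q_j^\top\bar{b}^\prime\|$, which the spectrum estimate bounds by a constant multiple of $\tilde{\kappa}^\prime(j)\|\bar{b}^\prime\|$, giving an additive error of order $\sqrt{j}\,L_{gyy}\tilde{\varepsilon}_j\,\tilde{\kappa}^\prime(j)\,\|\bar{r}_0^\prime\|$. Combining both estimates delivers the lemma. The main obstacle I expect is reconciling the idealized Chebyshev argument---which presumes exact orthogonality of $Q_j$ and the exact identity $T_j=Q_j^\top A_1 Q_j$---with the actual dynamic Lanczos setting in which both properties fail; the residual decomposition above is designed to funnel every such failure into the single term $\delta Q_j^\prime c_j$, after which the spectrum control from Lemma~\ref{lem:prime_T_bound} reduces the remaining work to routine norm estimates. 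A secondary subtlety is that the right-hand side used inside the algorithm is $r_k=b_k-w_h$ rather than $\bar{b}^\prime=b_1-A_1\bar{v}$; their difference is itself of order $\tilde{\varepsilon}_j$ under Assumptions~\ref{assu:f}--\ref{assu:strongg}, and can be absorbed into the same perturbation bound.
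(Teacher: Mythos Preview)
Your overall plan---split $\bar r_j^\prime$ into a ``clean'' Lanczos residual plus a perturbation coming from $\delta Q_j^\prime$---matches the paper's structure, and your bound on the perturbation piece via $\|\delta Q_j^\prime\|_F\le\sqrt{j}\,L_{gyy}\tilde\varepsilon_j$ together with $\|T_j^{-1}\|$ is exactly what the paper does. However, the two central steps of your argument do not go through as written.

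First, by taking $c_j=T_j^{-1}Q_j^\top\bar b^\prime$ you create the extra term $\bar b^\prime-Q_jQ_j^\top\bar b^\prime$. Since $\bar b^\prime=\|\bar b^\prime\|q_1$, this equals $-\|\bar b^\prime\|Q_j(Q_j^\top q_1-e_1)$, a quantity governed by the off-diagonal of $Q_j^\top Q_j$ (the matrix $R_j$ in Lemma~\ref{lem:dyn_lanc_decom}), \emph{not} by $\delta Q_j^\prime$; it is not funneled into $\delta Q_j^\prime c_j$ as you claim. The paper sidesteps this by taking the coordinate vector to be $\Delta\xi_j=\|\bar r_0^\prime\|\,T_j^{-1}e_1$ rather than $T_j^{-1}Q_j^\top\bar b^\prime$: then $Q_jT_j\Delta\xi_j=\|\bar r_0^\prime\|q_1=\bar b^\prime$ exactly, and the decomposition collapses to
\[
\bar r_j^\prime=-\beta_{j+1}\bigl(e_j^\top\Delta\xi_j\bigr)q_{j+1}-\delta Q_j^\prime\,\Delta\xi_j,
\]
with no orthogonality residue to track.

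Second, your plan to bound the surviving main term by manufacturing a nearby SPD operator $\tilde A$ in $\mathbb{R}^{d_y}$ with $Q_j^\top\tilde A Q_j=T_j$ and then quoting the CG/Chebyshev estimate for $\tilde A\Delta v=\bar b^\prime$ is precisely the step that the loss of orthogonality blocks: when $Q_j$ is not orthonormal, such an $\tilde A$ is neither uniquely determined nor guaranteed to have spectrum inside the interval furnished by Lemma~\ref{lem:prime_T_bound}, and the identification of the Galerkin residual with $\beta_{j+1}(e_j^\top c_j)q_{j+1}$ itself presumes orthonormality. The paper's device is to stay entirely in coordinate space: the scalar $\bigl|\beta_{j+1}e_j^\top T_j^{-1}e_1\bigr|$ is \emph{exactly} the $j$-th CG residual for the small system $Tz=e_1$, where $T$ is a $(j{+}1)\times(j{+}1)$ symmetric tridiagonal extension of $T_j$ obtained by a ``virtual'' step with zero step sizes (so $\tilde\varepsilon_{j+1}^\diamond=\tilde\varepsilon_j$ and Lemma~\ref{lem:prime_T_bound} still controls the spectrum of $T$). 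The standard Chebyshev rate for SPD CG then applies to $T$ with condition number at most $\tilde\kappa^\prime(j)$, producing the first term of the claim directly---no ambient $\tilde A$ and no orthogonality hypothesis on $Q_j$ are needed.
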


\begin{proof}
Denote the solution in the dynamic subspace in the $j$-th step by
\begin{equation}\label{eq:subspace_sol}
    \Delta \xi _j=\left( T_j \right) ^{-1}\bar{b} =\norm{\bar{r}^\prime_0}\left( T_j \right) ^{-1}e_1,
\end{equation}
where $T_j$ is nonsingular because of \cref{lem:prime_T_bound}. By~\eqref{eq:residual_prime}, \eqref{eq:lanc_recur_compact_ext}, and~\eqref{eq:subspace_sol}, we have
\begin{equation*}
    \bar{r}^\prime_j=\bar{b}-A_{1}Q_j\Delta \xi _j=-\beta _{j+1}q_{j+1}e_{j}^{\top}\Delta \xi _j-\delta Q^\prime_j\Delta \xi _j.
\end{equation*}
It follows that
\begin{equation}\label{rj_r0_1}
    \frac{\left\| \bar{r}^\prime_j \right\|}{\left\| \bar{r}^\prime_0 \right\|}\le \left\| \delta Q^\prime_j \right\| \left\| \left( T_j \right) ^{-1} \right\| + \left| \beta _{j+1}e_{j}^{\top}\left( T_j \right) ^{-1}e_1 \right|.
\end{equation}
The first term on the right side of \eqref{rj_r0_1} can be bounded by \eqref{eq:bound_deltaq_ext} and \eqref{eq:T_bound_ext}:
\begin{equation}
    \left\| \delta Q^\prime_j \right\| \left\| \left( T_j \right) ^{-1} \right\| \le \sqrt{j}L_{gyy}\tilde{\varepsilon} _j\frac{L_{gy}}{\mu _g-\frac{2\sqrt{3}}{3}\left( j+1 \right) ^3L_{gyy}\tilde{\varepsilon} _j}\le \sqrt{j}L_{gyy}\tilde{\varepsilon} _j\tilde{\kappa}^\prime\left( j \right).
\end{equation}
Recall that
\begin{equation}\label{eq:T_eigen}
    T_{j+1,j}:=\left[ \begin{array}{c}
	T_j\\
	\beta _{j+1}e_{j}^{\top}\\
    \end{array} \right],
\end{equation}
and that for any symmetric tridiagonal matrix $T$, where the upper left $(j+1)\times j$ block is $T_{j+1,j}$, the application of the classic Lanczos algorithm to $T$, starting with the initial vector $e_1$ will result in the matrix $T_{j+1,j}$ at the $j$-th step. To construct a suitable $(j+1)\times(j+1)$ symmetric tridiagonal matrix T, we consider a \emph{virtual step} with ${\lambda^\diamond}={\theta^\diamond}=0$, which leads to $(x^\diamond_{j+1},y^\diamond_{j+1})=\kh{x_j,y_j}$, $\tilde{\varepsilon}^\diamond_{j+1}=\tilde{\varepsilon}_j$, and 
\begin{equation*}
    T={\setlength{\arraycolsep}{1.pt} \left( \begin{array}{ccc:c}
            	&		&		&		\\
            	&		\vphantom{{\small \frac{2^{2^{2^2}}}{\frac{2}{3}}}}{\Large T_{j+1,j}}\phantom{s}&		&		\\
            	&		&		&		\beta _{j+1}\vphantom{{\small \frac{2}{\frac{2}{3}}}}\\
            	&		&		&		\tilde{\alpha}^\diamond _{j+1}\\
            \end{array} \right) }
\end{equation*}
By \cref{lem:prime_T_bound}, given any  eigenvalue $\mu$ of $T$
\begin{equation*}
    \mu _g-\frac{2\sqrt{3}}{3}\left( j+1 \right) ^3L_{gyy}\tilde{\varepsilon} _j\le \mu \le L_{gy}+\frac{2\sqrt{3}}{3}\left( j+1 \right) ^3L_{gyy}\tilde{\varepsilon} _j.
\end{equation*}
In this way, $\left| \beta _{j+1}e_{j}^{\top}\left( T_j \right) ^{-1}e_1 \right|$ can be seen as the residual in the $j$-th step of the classic Lanczos process with the positive-definite matrix $T$ and the iniitial vector $e_1$. Since the eigenvalues of $T$ satisfy~\eqref{eq:T_eigen}, it~follows from the standard convergence property of the Lanczos process~\citep{greenbaum1997iterative} that
\begin{equation*}
    \left| \beta _{j+1}e_{j}^{\top}\left( T_j \right) ^{-1}e_1 \right|\le 2\sqrt{\tilde{\kappa}^\prime\left( j \right)}\left( \frac{\sqrt{\tilde{\kappa}^\prime\left( j \right)}-1}{\sqrt{\tilde{\kappa}^\prime\left( j \right)}+1} \right) ^j,
\end{equation*}
which completes the proof.
\end{proof}

\subsection{Proof of $\mu _g-\frac{2\sqrt{3}}{3}\left( j+1 \right) ^3L_{gyy}{\varepsilon} _j>0$}
In this part, we will give the detailed proof of $\mu _g-\frac{2\sqrt{3}}{3}\left( j+1 \right) ^3L_{gyy}{\varepsilon} _j>0$. At the same time, we demonstrate that, with appropriate step sizes, the auxiliary variable~$v_k$ is bounded, thereby showing that the hyper-gradient estimator~\eqref{eq:hypergradient_esti} remains bounded. This ensures the stable behavior of the dynamic Lanczos process,
\begin{lemma}\label{lem:bound_v}
    Suppose Assumptions \ref{assu:f}, \ref{assu:g}, \ref{assu:strongg}, \ref{assu:boundfx} and \ref{assu:y_initial} hold. If within each epoch, we set the step size $\theta\sim\mathcal{O}({1}/{m})$ a constant for $y$, and the step size for $x$ as zero in the first $m_0\sim\mathcal{O}(1)$ steps and the others as an appropriate constant $\lambda\sim\mathcal{O}({1}/{m^4})$, then for any epoch, 
    \begin{equation*}
        \mu _g-\frac{2\sqrt{3}}{3}\left( j+1 \right) ^3L_{gyy}\varepsilon _j>0,\ \text{ for } j=1,2,\ldots,m+1,
    \end{equation*}
    and there exists a constant $C_v>0$ so that $\norm{v_k}\le C_v$ for ${v_k}$ generated by \cref{alg:LancBiO}.
\end{lemma}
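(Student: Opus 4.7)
The plan is to establish the two conclusions simultaneously by an induction that is nested: an outer induction on the epoch index $h$ and, within each epoch, an inner induction on the step $j = 1, 2, \ldots, m+1$. The key quantitative invariant to propagate is a bound of the form $\varepsilon_j^h \le \frac{\sqrt{3}\,\mu_g}{4(m+1)^3 L_{gyy}}$, which by direct substitution forces $\mu_g - \frac{2\sqrt{3}}{3}(j+1)^3 L_{gyy}\,\varepsilon_j \ge \mu_g/3 > 0$ for all $j \le m+1$; alongside it, one must carry an invariant $\|v_{mh+j}\| \le C_v$ for an absolute constant $C_v$, because the magnitude of $v$ controls each $x$-update through the hyper-gradient estimator $\widetilde{\nabla}\varphi(x_k,y_k,v_k) = \nabla_x f(x_k,y_k) - \nabla^2_{xy}g(x_k,y_k)\,v_k$.

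For the base epoch ($h=0$), Assumption~\ref{assu:y_initial} immediately supplies the $y$-part of the invariant, and the choice $\lambda = 0$ on the first $m_0$ steps freezes the $x$-iterates at $x_1$, so $\|x_s - x_t\| = 0$ on that phase. During these warm-up iterations, gradient descent on $g(x_1,\cdot)$ with step $\theta \sim \mathcal{O}(1/m)$ contracts $\|y_j - y_1^*\|$ at rate $(1-\theta\mu_g)$, and choosing $m_0$ as an appropriate multiple of $\log m$ drives the $y$-error well below the admissible threshold, giving slack for the second half of the epoch. For the inner inductive step, assume the invariants hold through step $j{-}1$. Then the extended \cref{lem:prime_T_bound} confines the eigenvalues of $T_{j-1}$ to $[\mu_g/3,\,L_{gy}+\mu_g]$, so $\|(T_{j-1})^{-1}\| = \mathcal{O}(1/\mu_g)$; together with the bounds on $\|b_k\|$ (via \cref{assu:f}) and on $\|\bar v_h\|$ (from the previous epoch's invariant), this yields $\|\Delta v_j\| = \mathcal{O}(1/\mu_g)$ and hence $\|v_{mh+j}\| \le C_v$. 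The estimator $\widetilde{\nabla}\varphi$ is then bounded, so $\|x_{k+1}-x_k\| \le \lambda\cdot C$; with $\lambda = \mathcal{O}(1/m^4)$ and at most $m$ steps per epoch, the $x$-contribution to $\varepsilon_j^h$ is $\mathcal{O}(1/m^3)$.

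For the $y$-part of $\varepsilon_j^h$, I would use the one-step recursion $\|y_{k+1}-y_{k+1}^*\| \le (1-\theta\mu_g)\|y_k - y_k^*\| + \frac{L_{gx}}{\mu_g}\|x_{k+1}-x_k\|$ (combining \cref{lem:ysmooth} with standard strong-convexity contraction of gradient descent on $g(x_k,\cdot)$). Unrolling this geometric recursion over $m$ inner iterations with $\theta \sim \mathcal{O}(1/m)$ and the $\mathcal{O}(1/m^4)$ bound on the $x$-increments gives $\|y_k - y_k^*\| \le \|y_{mh+1}-y^*_{mh+1}\| + \mathcal{O}(1/m^3)$, which closes the inner induction. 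The outer induction passes from epoch $h$ to $h+1$ by noting that $y_{m(h+1)+1}$ inherits the contracted estimate from the end of epoch $h$ (so the inductive bound on $\|y - y^*\|$ is preserved), and $\bar v_{h+1} := v_{m(h+1)+1}$ is bounded by the previous epoch's invariant.

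The main obstacle is breaking the apparent circularity: bounding $\|v_k\|$ requires a lower eigenvalue bound on $T_j$, which requires bounding $\varepsilon_j$, which requires bounding the $x$- and $y$-movements, which in turn requires bounding $\|v_k\|$. The resolution is to track explicit numerical constants (not just orders of magnitude) so that the invariant $\varepsilon_j^h \le \frac{\sqrt{3}\mu_g}{4(m+1)^3 L_{gyy}}$ is strictly preserved after a full epoch of $m$ iterations, with the slack furnished by (i) Assumption~\ref{assu:y_initial} for the initial $y$-accuracy, (ii) the $m_0 \sim \Omega(\log m)$ warm-up steps that contract the $y$-error below the admissible level before any $x$-update is taken, and (iii) the choice $\lambda \sim \mathcal{O}(1/m^4)$, whose fourth power in $m$ precisely compensates the $(j+1)^3$ factor appearing in the eigenvalue bound. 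A secondary technical point is that the epoch boundary involves a fresh restart of the Lanczos subspace with a new reference $\bar{v}_{h+1}$; one must verify that the residual $b_1 - A_1\bar{v}_{h+1}$ fed into the new dynamic Lanczos process stays bounded, which follows from the inductive bound on $\|v\|$ combined with Assumptions~\ref{assu:f}--\ref{assu:g}.
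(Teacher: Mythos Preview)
Your nested induction architecture and the numerical invariant $\varepsilon_j^h \le \frac{\sqrt{3}\mu_g}{4(m+1)^3 L_{gyy}}$ match the paper exactly. The gap is in how you propose to bound $\|v_k\|$.

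You claim that the eigenvalue bound on $T_j$ gives $\|(T_j)^{-1}\| = \mathcal{O}(1/\mu_g)$ and hence $\|\Delta v_j\| = \mathcal{O}(1/\mu_g)$, so $\|v_k\| = \|\bar v_h + \Delta v_j\| \le C_v$. But $\Delta v_j = Q_j(T_j)^{-1}Q_j^\top r_j$, and controlling $\|Q_j\|$ and $\|Q_j^\top r_j\|$ uniformly in $j$ is not automatic once orthogonality is lost; the crude bound picks up a factor $j\le m$. More importantly, even granting an $\mathcal{O}(1/\mu_g)$ bound on $\|\Delta v_j\|$, the induction across epochs still fails: you obtain only $\|\bar v_{h+1}\| \le \|\bar v_h\| + \mathcal{O}(1/\mu_g)$, which accumulates, and your proposed epoch-boundary bound on $\|b_1 - A_1 \bar v_{h+1}\|$ via ``the inductive bound on $\|v\|$ combined with Assumptions~\ref{assu:f}--\ref{assu:g}'' yields only $\|b_1\| + L_{gy}\|\bar v_{h+1}\|$, which is strictly larger than the original $C_r$. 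So the outer induction does not close.

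The paper resolves this by carrying an additional invariant: a uniform residual bound $\|b_1 - A_1 v_j\| \le 3\sqrt{\tilde\kappa}\,C_r$, obtained from \cref{lem:vdescent} (which is phrased in terms of the auxiliary accumulator $\tilde\varepsilon_j$, tracked separately from $\varepsilon_j$ in the paper's induction). This immediately gives the $m$-independent estimate $\|v_j\| = \|A_1^{-1}(b_1 - (b_1 - A_1 v_j))\| \le \mu_g^{-1}(3\sqrt{\tilde\kappa}\,C_r + \|b_1\|)$, and at the epoch boundary the decay built into \cref{lem:vdescent} together with the small perturbation $(A_1,b_1)\to(A_{m+1},b_{m+1})$ forces $\|b_{m+1} - A_{m+1} v_m\| \le C_r$ with the \emph{same} constant $C_r$, closing the outer induction. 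The correction is therefore not to your induction skeleton but to the invariants you carry: add the residual bound and the separate $\tilde\varepsilon_j$, and route $\|v_k\|$ through the residual rather than through $\|(T_j)^{-1}\|$ directly.
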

\begin{proof}
    Consider the iterates within one epoch and the constants
    \begin{equation*}
            0<\tilde{\varepsilon}< \mu _g \text{ and } \tilde{\kappa}:=\frac{L_{gy}+\tilde{\varepsilon}}{\mu _g-\tilde{\varepsilon}}.
    \end{equation*}
   It follows from \cref{lem:vdescent} that if $\tilde{\varepsilon}_j\le\frac{\sqrt{3}\tilde{\varepsilon}}{2(m+1)^3L_{gyy}}$, then
    \begin{equation}\label{eq:bound_r}
        \frac{\left\| b_1-A_1v_j \right\|}{\left\| b_1-A_1\bar{v} \right\|}\le 2\sqrt{\tilde{\kappa}}\left( \frac{\sqrt{\tilde{\kappa}}-1}{\sqrt{\tilde{\kappa}}+1} \right) ^j+\frac{\sqrt{3}\tilde{\varepsilon}}{2(m+1)^2}\tilde{\kappa}\le 3\sqrt{\tilde{\kappa}}.
    \end{equation}
   
    Then, we give a proof by induction. At the beginning of the algorithm, \cref{assu:y_initial} reveals
    \begin{equation*}
        \varepsilon_1 = \left\| y_1-y_{1}^{*} \right\| \le \frac{\sqrt{3}\mu _g}{8\left( m+1 \right) ^3L_{gyy}}.
    \end{equation*}
    Combining it with $\left\| b_1-A_1\bar{v}_0 \right\| \le C_r$ constructs the start of induction within an epoch and induction between epochs. Within an epoch, suppose the following statements hold for $i=1,2,\ldots,j$,
    \begin{align*}
            \left\| b_1-A_1v_i \right\| &\le 3\sqrt{\tilde{\kappa}}C_r, 
            \\
            \left\| v_i \right\| &\le \frac{1}{\mu _g}\left( 3\sqrt{\tilde{\kappa}}C_r+C_{fx} \right), 
            \\
            \left\| \widetilde{\nabla }\varphi _i \right\| &\le C_{fx}+\frac{1}{\mu _{g}^{2}}\left( 3\sqrt{\tilde{\kappa}}C_r+C_{fx} \right), 
            \\
            \tilde{\varepsilon}_i&\le\frac{\sqrt{3}\tilde{\varepsilon}}{2(m+1)^3L_{gyy}}, \nonumber
            \\ 
            \varepsilon_i &\le \frac{\sqrt{3}\mu _g}{4\left( m+1 \right) ^3L_{gyy}}. \nonumber
    \end{align*}
    Then by setting the stepsizes
    \begin{equation}
        \begin{aligned}
            &\theta \le \frac{\tilde{\varepsilon}}{\mu_g L_{gy}m},
            \\
            &\lambda \le \frac{\sqrt{3}\tilde{\varepsilon}}{4(m+1)^4L_{gyy}}\kh{1+\theta\frac{L_{gy}L_{gx}}{\mu_g}}^{-1}\left( C_{fx}+\frac{1}{\mu _{g}^{2}}\left( 3\sqrt{\tilde{\kappa}}C_r+C_{fx} \right) \right) ^{-1},
        \end{aligned}
    \end{equation}
    and noticing that
    \begin{align*}
        \norm{\nabla_y g(x_{i+1},y_i)} &= \norm{\nabla_y g(x_{i+1},y_i) - \nabla_y g(x_{i+1},y^*_{i+1})} 
        \\
        &\le L_{gy}\norm{y_i-y^*_{i+1}}
        \\
        &\le L_{gy}\kh{\norm{y_i-y^*_i}+\norm{y_i^*-y^*_{i+1}}}
        \\
        &\le L_{gy}\norm{y_i-y^*_i} + \frac{L_{gy}L_{gx}}{\mu_g}\norm{x_i-x_{i+1}},
    \end{align*}
    we can get
    \begin{align}
        \tilde{\varepsilon}_{j+1}\le&\ \lambda \sum_{i=1}^{i=j}{\left\| \widetilde{\nabla }\varphi _i \right\|}+\theta \sum_{i=1}^{i=j}{\left\| \nabla _yg\left( x_{i+1},y_i \right) \right\|}   \nonumber
        \\
        \le& \kh{1+\theta\frac{L_{gy}L_{gx}}{\mu_g}}\lambda \sum_{i=1}^{i=j}{\left\| \widetilde{\nabla }\varphi _i \right\|} + \theta L_{gy}\sum_{i=1}^{i=j}\norm{y_i-y^*_i}    \nonumber
        \\
        \le& \kh{1+\theta\frac{L_{gy}L_{gx}}{\mu_g}}\lambda \sum_{i=1}^{i=j}{\left\| \widetilde{\nabla }\varphi _i \right\|} + \theta L_{gy}\sum_{i=1}^{i=j}\varepsilon_i   \nonumber
        \\
        \le& \kh{1+\theta\frac{L_{gy}L_{gx}}{\mu_g}}\lambda \sum_{i=1}^{i=j}{\left\| \widetilde{\nabla }\varphi _i \right\|} + \theta L_{gy}\frac{\sqrt{3}\mu _gm}{4\left( m+1 \right) ^3L_{gyy}}   \nonumber
        \\
        \le&\ \frac{\sqrt{3}\tilde{\varepsilon}}{2(m+1)^3L_{gyy}}.      \label{eq:induction_with_epoch1}
    \end{align}
    It follows from \eqref{eq:bound_r} that
    \begin{align}
        \left\| b_1-A_1v_{j+1} \right\| &\le 3\sqrt{\tilde{\kappa}}C_r, \label{eq:induction_with_epoch2}
        \\
        \left\| v_{j+1} \right\| &=\left\| A_{1}^{-1}\left( b_1-A_1v_{j+1}-b_1 \right) \right\| \le \frac{1}{\mu _g}\left( 3\sqrt{\tilde{\kappa}}C_r+C_{fx} \right) , \label{eq:induction_with_epoch3}
        \\
        \left\| \widetilde{\nabla }\varphi _{j+1} \right\| &\le C_{fx}+\frac{1}{\mu _{g}^{2}}\left( 3\sqrt{\tilde{\kappa}}C_r+C_{fx} \right). \label{eq:induction_with_epoch4}
    \end{align}

    Additionally, the descent property of $\norm{y_s-y^*_s}$ and the Lipschitz continuity of $y^*$ reveal that
    \begin{align}
        \left\| y_s-y_{s}^{*} \right\| \le & \left( 1-\theta \mu _g \right) ^{\frac{1}{2}}\norm{y_{s-1}-y^*_s}
        \\
        \le& \left( 1-\theta \mu _g \right) ^{\frac{1}{2}}\left\| y_{s-1}-y_{s-1}^{*} \right\| +\left( 1-\theta \mu _g \right) ^{\frac{1}{2}}\left( \frac{L_{gx}}{\mu _g} \right) \left\| x_s-x_{s-1} \right\| \nonumber
        \\
        \le& \left( 1-\theta \mu _g \right) ^{\frac{s-1}{2}}\left\| y_1-y_{1}^{*} \right\| +\left( 1-\theta \mu _g \right) ^{\frac{1}{2}}\left( \frac{L_{gx}}{\mu _g} \right) \lambda \sum_{t=1}^{t=s-1}{\left\| \widetilde{\nabla }\varphi _t \right\|} \nonumber
        \\
        \le & \left\| y_1-y_{1}^{*} \right\| +\left( 1-\theta \mu _g \right) ^{\frac{1}{2}}\left( \frac{L_{gx}}{\mu _g} \right) \lambda \sum_{t=1}^{t=s-1}{\left\| \widetilde{\nabla }\varphi _t \right\|}. \label{eq:y_decent_step}
    \end{align}
    Setting
    \begin{align*}
        \lambda \le \frac{\sqrt{3}\mu _g}{8(m+1)^4L_{gyy}}\left( C_{fx}+\frac{1}{\mu _{g}^{2}}\left( 3\sqrt{\tilde{\kappa}}C_r+C_{fx} \right) \right) ^{-1}\left( 1+\frac{L_{gx}}{\mu _g}+\left( 1-\theta \mu _g \right) ^{\frac{1}{2}}\left( \frac{L_{gx}}{\mu _g} \right) \right) ^{-1}
    \end{align*}
    yields from \cref{assu:y_initial}, \eqref{eq:induction_with_epoch3}, \eqref{eq:y_decent_step} that
    \begin{align}
        \varepsilon _{j+1}\le& \left( 1+\frac{L_{gx}}{\mu _g} \right) \max _{1\le s,t\le j+1}\left\| x_s-x_t \right\| +\max _{1\le s\le j+1}\left\| y_s-y_{s}^{*} \right\|  \nonumber
        \\
        \le& \left\| y_1-y_{1}^{*} \right\| +\left( 1+\frac{L_{gx}}{\mu _g}+\left( 1-\theta \mu _g \right) ^{\frac{1}{2}}\left( \frac{L_{gx}}{\mu _g} \right) \right) \lambda \sum_{i=1}^{i=j}{\left\| \widetilde{\nabla }\varphi _i \right\|} \nonumber
        \\
        \le &\left\| y_1-y_{1}^{*} \right\| +\left( 1+\frac{L_{gx}}{\mu _g}+\left( 1-\theta \mu _g \right) ^{\frac{1}{2}}\left( \frac{L_{gx}}{\mu _g} \right) \right) \lambda \sum_{i=1}^{i=j}{\left\| \widetilde{\nabla }\varphi _i \right\|} \nonumber
        \\
        \le&\ \frac{\sqrt{3}\mu _g}{4\left( m+1 \right) ^3L_{gyy}}. \label{eq:induction_with_epoch5}
    \end{align}
    As for the next epoch, denoting $C_v = \frac{1}{\mu _g}\left( 3\sqrt{\tilde{\kappa}}C_r+C_{fx}\right )$, we have
    \begin{align}
            \left\| b_{m+1}-A_{m+1}v_m \right\| &\le \left( L_{fx}+L_{gyy}C_{{v}} \right) \tilde{\varepsilon}_{m+1}+\left\| b_1-A_1v_m \right\|  \nonumber
            \\
            &\le \left( L_{fx}+L_{gyy}C_{{v}} \right) \tilde{\varepsilon}_{m+1}+\left( 2\sqrt{\tilde{\kappa}}\left( \frac{\sqrt{\tilde{\kappa}}-1}{\sqrt{\tilde{\kappa}}+1} \right) ^m+\sqrt{m}L_{gyy}\tilde{\varepsilon}_{m+1}\tilde{\kappa} \right) C_r \nonumber
            \\
            &\le C_r\left( \left( \frac{L_{fx}+L_{gyy}C_{{v}}}{C_r} \right) \tilde{\varepsilon}_{m+1}+2\sqrt{\tilde{\kappa}}\left( \frac{\sqrt{\tilde{\kappa}}-1}{\sqrt{\tilde{\kappa}}+1} \right) ^m+\sqrt{m}L_{gyy}\tilde{\varepsilon}_{m+1}\tilde{\kappa} \right)  \nonumber
            \\
            &\le C_r,       \label{eq:for_next_induc1}
        \end{align}
    by choosing $m,\tilde{\varepsilon}$ such that
    \begin{equation*}
        \left( \frac{L_{fx}+L_{gyy}C_{{v}}}{C_r} \right) \tilde{\varepsilon}_{m+1}+2\sqrt{\tilde{\kappa}}\left( \frac{\sqrt{\tilde{\kappa}}-1}{\sqrt{\tilde{\kappa}}+1} \right) ^m+\sqrt{m}L_{gyy}\tilde{\varepsilon}_{m+1}\tilde{\kappa}\le 1.
    \end{equation*}
    Moreover, since the step size for $x$ is set as zero at the first $m_0$ steps in the next epoch, we obtain for $i=1,2,\ldots,m_0$,
    \begin{align}\label{eq:for_next_induc2}
        \varepsilon _{m+i}=\left\| y_{m+i}-y_{m+i}^{*} \right\| \le \left( 1-\theta \mu _g \right) ^{\frac{i}{2}}\left\| y_m-y_{m}^{*} \right\| \le \varepsilon _m\le \frac{\sqrt{3}\mu _g}{4\left( m+1 \right) ^3L_{gyy}}.
    \end{align}
    Specifically,
    \begin{align}\label{eq:for_next_induc3}
        \left\| y_{m+m_0}-y_{m+m_0}^{*} \right\| \le \left( 1-\theta \mu _g \right) ^{\frac{m_0}{2}}\left\| y_m-y_{m}^{*} \right\| \le \frac{\sqrt{3}\mu _g}{8\left( m+1 \right) ^3L_{gyy}}
    \end{align}
    if we choose $m_0$ so that $\left( 1-\theta \mu _g \right) ^{\frac{m_0}{2}}\le \frac{1}{2}$. 
    Therefore, by induction within an epoch \eqref{eq:induction_with_epoch1}, \eqref{eq:induction_with_epoch2}, \eqref{eq:induction_with_epoch3}, \eqref{eq:induction_with_epoch4}, \eqref{eq:induction_with_epoch5} and induction between epochs \eqref{eq:for_next_induc1}, \eqref{eq:for_next_induc2}, \eqref{eq:for_next_induc3}, we conclude the lemma.
\end{proof}

\subsection{Proof of \cref{lem:descent_res}}
\begin{lemma}\label{lem:descent_res_ap}
Suppose Assumptions \ref{assu:f}, \ref{assu:g}, \ref{assu:strongg}, \ref{assu:boundfx} and \ref{assu:y_initial} hold. If within each epoch, we set the step size $\theta\sim\mathcal{O}({1}/{m})$ a constant for $y$ and the step size for $x$ as zero in the first $m_0\sim\mathcal{O}(1)$ steps, and the others as an appropriate constant $\lambda\sim\mathcal{O}({1}/{m^4})$, we have the following inequality,
\begin{equation*}
\frac{\left\| \bar{r}_j \right\|}{\left\| \bar{r}_0 \right\|}\le 2\sqrt{\tilde{\kappa}\left( j \right)}\left( \frac{\sqrt{\tilde{\kappa}\left( j \right)}-1}{\sqrt{\tilde{\kappa}\left( j \right)}+1} \right) ^j+\sqrt{j}L_{gyy}\varepsilon _j\tilde{\kappa}\left( j \right),
\end{equation*}
where 
$
\tilde{\kappa}\left( j \right) :=\frac{L_{gy}+\frac{2\sqrt{3}}{3}\left( j+1 \right) ^3L_{gyy}\varepsilon _j}{\mu _g-\frac{2\sqrt{3}}{3}\left( j+1 \right) ^3L_{gyy}\varepsilon _j}.
$
\end{lemma}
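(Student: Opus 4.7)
The plan is to assemble \cref{lem:descent_res_ap} from the three preceding pieces of \cref{sec:proof312}, following the four-step template in the proof sketch. The high-level idea is that the ``primed'' residual $\bar{r}_j' = \bar{b} - A_1\Delta v_j$ is easier to handle than the true residual $\bar{r}_j = \bar{b} - A_j^*\Delta v_j$, because $A_1$ is fixed across the inner steps of an epoch and Lemma~\ref{lem:vdescent} already supplies a sharp Lanczos-style bound for $\|\bar{r}_j'\|/\|\bar{r}_0'\|$. The gap between $\bar{r}_j$ and $\bar{r}_j'$ is a controllable perturbation, since $\bar{r}_j - \bar{r}_j' = (A_1-A_j^*)\Delta v_j$ and both $\|A_1-A_j^*\|$ and $\|\Delta v_j\|$ admit epoch-uniform bounds.

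Concretely, I would proceed in the following order. First, invoke \cref{lem:bound_v}: under the prescribed step-size schedule ($\theta\sim\mathcal{O}(1/m)$, $\lambda=0$ on the first $m_0$ steps and $\lambda\sim\mathcal{O}(1/m^4)$ afterwards), the stability condition $\mu_g - \tfrac{2\sqrt{3}}{3}(j+1)^3 L_{gyy}\tilde{\varepsilon}_j > 0$ required by \cref{lem:vdescent} is maintained throughout the epoch, together with uniform bounds $\|v_k\|\le C_v$ and hence $\|\Delta v_j\|\le 2C_v$. Second, apply \cref{lem:vdescent} to get the residual estimate for $\bar{r}_j'$ in terms of $\tilde{\kappa}'(j)$. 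Third, use the Lipschitz continuity of $\nabla^2_{yy}g$ together with the Lipschitz continuity of $y^*$ from \cref{lem:ysmooth} to bound
\[
\|A_1 - A_j^*\| \le L_{gyy}\bigl((1+L_{gx}/\mu_g)\|x_1-x_j\| + \|y_1-y_1^*\|\bigr) \le L_{gyy}\varepsilon_j,
\]
which contributes an additive term of order $L_{gyy}\varepsilon_j$ to the residual via $\|\bar{r}_j\|\le\|\bar{r}_j'\|+L_{gyy}\varepsilon_j\|\Delta v_j\|$. Finally, since $\tilde{\varepsilon}_j\le 2\varepsilon_j$ by a triangle-inequality plus Lipschitz-$y^*$ argument, $\tilde{\kappa}'(j)$ can be upper-bounded by $\tilde{\kappa}(j)$ (modulo absorbing harmless constants into the $\sqrt{j}L_{gyy}\varepsilon_j\tilde{\kappa}(j)$ term), and rearranging yields the stated bound.

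The main obstacle is the coupled induction already formalized inside \cref{lem:bound_v}: one must simultaneously propagate five invariants---bounds on $\|b_1-A_1v_i\|$, $\|v_i\|$, $\|\widetilde{\nabla}\varphi_i\|$, $\tilde{\varepsilon}_i$, and $\varepsilon_i$---both \emph{within} an epoch (leveraging the Lanczos contraction factor $(\sqrt{\tilde{\kappa}}-1)/(\sqrt{\tilde{\kappa}}+1)$) and \emph{across} consecutive epochs (leveraging that the warm start combined with the first $m_0\sim\Omega(\log m)$ ``$y$-only'' updates restores the refined initialization \cref{assu:y_initial}). The step-size choices $\theta\sim 1/m$ and $\lambda\sim 1/m^4$ are precisely calibrated so that $\tilde{\varepsilon}_j$ remains trapped inside the stability ball $\tilde{\varepsilon}_j\le\tfrac{\sqrt{3}\tilde{\varepsilon}}{2(m+1)^3 L_{gyy}}$ and $\varepsilon_j$ inside $\tfrac{\sqrt{3}\mu_g}{4(m+1)^3 L_{gyy}}$; the remainder of the proof---applying the primed Lanczos estimate and performing the perturbation passage from $A_1$ to $A_j^*$---is essentially bookkeeping once this invariant ball is shown to be forward-invariant.
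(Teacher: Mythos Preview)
Your plan is workable in spirit but takes an unnecessary detour, and as stated it does not reproduce the exact inequality in the lemma.

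The paper's proof is essentially one line: invoke \cref{lem:bound_v} to obtain $\mu_g-\tfrac{2\sqrt{3}}{3}(j+1)^3L_{gyy}\varepsilon_j>0$, and then \emph{rerun the proof of \cref{lem:vdescent} verbatim, but with the unprimed lemmas} (\cref{lem:dyn_lanc_decom} and \cref{lem:T_bound}, which are already formulated with $A_j^*$ and $\varepsilon_j$) in place of their primed analogues. The primed versions in \cref{sec:extendF} were introduced solely so that the self-contained induction inside \cref{lem:bound_v} could be closed using only $\tilde{\varepsilon}_j$; once that induction has delivered the positivity condition for $\varepsilon_j$, the original lemmas apply directly and yield the bound on $\bar{r}_j=\bar{b}-A_j^*\Delta v_j$ with $\tilde{\kappa}(j)$ defined via $\varepsilon_j$. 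No passage from $\bar{r}_j'$ to $\bar{r}_j$, and no comparison between $\tilde{\varepsilon}_j$ and $\varepsilon_j$, is needed.

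Your route instead treats \cref{lem:vdescent} as a black box for $\bar{r}_j'$ and then adds a perturbation term $\|(A_1-A_j^*)\Delta v_j\|$. Two issues arise. First, bounding $\|\Delta v_j\|$ by the crude constant $2C_v$ and then dividing by $\|\bar{r}_0\|$ is dangerous because $\|\bar{r}_0\|$ shrinks along the run; you would need to use instead $\|\Delta v_j\|\le\|Q_j\|\,\|T_j^{-1}\|\,\|\bar{r}_0\|$, which does give the correct $\sqrt{j}\,L_{gyy}\varepsilon_j/(\mu_g-\cdots)$ scaling. Second, and more importantly, your comparison $\tilde{\varepsilon}_j\le 2\varepsilon_j$ (which is correct) only yields $\tilde{\kappa}'(j)\le\tilde{\kappa}(j)\big|_{\varepsilon_j\to 2\varepsilon_j}$, not $\tilde{\kappa}'(j)\le\tilde{\kappa}(j)$. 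Since this factor of $2$ sits \emph{inside} the contraction base $(\sqrt{\tilde{\kappa}}-1)/(\sqrt{\tilde{\kappa}}+1)$ that is raised to the $j$-th power, it cannot be ``absorbed into the $\sqrt{j}L_{gyy}\varepsilon_j\tilde{\kappa}(j)$ term'' as you suggest; you would obtain a structurally identical but strictly weaker inequality than the one stated. The paper avoids both issues by never leaving the $A_j^*$--$\varepsilon_j$ frame.
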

\begin{proof}
    \cref{lem:bound_v} guarantees the condition 
    \begin{equation*}
        \mu _g-\frac{2\sqrt{3}}{3}\left( j+1 \right) ^3L_{gyy}{\varepsilon} _j>0
    \end{equation*}
    is satisfied. The remaining proof can be directly adapted from \cref{lem:vdescent}.
\end{proof}

\section{Proof of The Main Theorem}\label{sec:proof_main_theory}
In this section, we provide proof of the main theorem presented in~\cref{sec:convergence}. Let $A_k=\nabla^2_{yy}g(x_k,y_k)$ and $b_k=\nabla_yf(x_k,y_k)$, and let the reference values be~$A_{k}^{*}=\nabla _{yy}^{2}g\left( x_k,y^*_k \right)$, $b^*_k=\nabla _yf\left( x_k,y^*_k \right)$ and $v_k^*=(A_k^*)^{-1}b_k^*$. To begin with, a short proof sketch is provided for guidance, which is structured in four main steps.
\paragraph{Step1: upper-bounding the residual $\norm{v_{k}-v_{k}^*}$}
 \hspace*{\fill} \\
\cref{sec:proof_lanc} and \ref{sec:proof312} lay a foundation to bound the residual term $\norm{v_{k}-v_{k}^*}$ (as a corollary of \cref{lem:descent_res_ap}).
\paragraph{Step2: studying the descent property of $\norm{y_k-y_k^*}$}
 \hspace*{\fill} \\
\cref{lem:y_descent} reveals the descent property of the estimation error for $y^*$ as follows,
\begin{equation*}
    \left\| y_{k+1}-y_{k+1}^{*} \right\| ^2\le \left( 1+\sigma \right) \left( 1-\theta \mu _g \right) \left\| y_k-y_{k}^{*} \right\| ^2+\left( 1+\frac{1}{\sigma} \right) \left( 1-\theta \mu _g \right) \left( \frac{L_{gx}}{\mu _g} \right) ^2\left\| x_{k+1}-x_k \right\| ^2.
\end{equation*}

\paragraph{Step3: Controlling the hyper-gradient estimation error $\|\widetilde{\nabla}\varphi(x_k)-\nabla \varphi(x_k)\|$}
 \hspace*{\fill} \\
Defining
\begin{equation*}
    \delta _k:=\left( \frac{L_{fx}^{2}\mu _{g}^{2}+L_{gxy}^{2}C_{fg}^{2}}{L_{gx}^{2}\mu _{g}^{2}} \right) \left\| y_k-y_{k}^{*} \right\| ^2+\left\| v_k-v_{k}^{*} \right\| ^2,
\end{equation*}
and incorporating the results from the last two steps, then in \cref{lem:hypergrad_esit_remove} we can establish the upper bound for $\|\widetilde{\nabla}\varphi(x_k)-\nabla \varphi(x_k)\|$ and $\delta_k$ recursively.

\paragraph{Step4: Assembling the estimations above and achieving the conclusion}
 \hspace*{\fill} \\
Consider the descent property of $\mathcal{L}_k:=\varphi_k+\delta_k$. Substituting the inequalities developed in step1 to step3, and telescoping the index from $0$ to $K$ gives the convergence results.

The following lemma displays the descent property of the iterates $\{y_k\}$.

\begin{lemma}\label{lem:y_descent}
Suppose Assumptions \ref{assu:g} and \ref{assu:strongg} hold. Setting $0<\theta\le\frac{2}{\mu_g+L_{gy}}$, we have
    \begin{equation*}
        \left\| y_{k+1}-y_{k+1}^{*} \right\| ^2\le \left( 1+\sigma \right) \left( 1-\theta \mu _g \right) \left\| y_k-y_{k}^{*} \right\| ^2+\left( 1+\frac{1}{\sigma} \right) \left( 1-\theta \mu _g \right) \left( \frac{L_{gx}}{\mu _g} \right) ^2\left\| x_{k+1}-x_k \right\| ^2,
    \end{equation*}
    for any $\sigma>0$.
\end{lemma}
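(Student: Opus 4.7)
\textbf{Proof proposal for \cref{lem:y_descent}.} The plan is to decompose the one-step error around the moving fixed point $y_{k+1}^{*}$, invoke the standard contraction for gradient descent on a strongly convex, Lipschitz-smooth function, and then absorb the drift of the fixed point via Young's inequality together with \cref{lem:ysmooth}.

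First, using the update rule $y_{k+1}=y_k-\theta\nabla_y g(x_{k+1},y_k)$ and the first-order optimality $\nabla_y g(x_{k+1},y_{k+1}^{*})=0$, I would write
\begin{equation*}
y_{k+1}-y_{k+1}^{*}=\bigl(y_k-y_{k+1}^{*}\bigr)-\theta\bigl(\nabla_y g(x_{k+1},y_k)-\nabla_y g(x_{k+1},y_{k+1}^{*})\bigr).
\end{equation*}
Because $g(x_{k+1},\cdot)$ is $\mu_g$-strongly convex and $L_{gy}$-smooth by Assumptions \ref{assu:g} and \ref{assu:strongg}, the classical co-coercivity argument (see e.g.\ \citet{nesterov2018lectures}) gives, for any step size $0<\theta\le 2/(\mu_g+L_{gy})$, the one-step contraction
\begin{equation*}
\left\| y_{k+1}-y_{k+1}^{*} \right\| ^2\le \left( 1-\theta \mu _g \right) \left\| y_k-y_{k+1}^{*} \right\| ^2.
\end{equation*}

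Next, to convert $\|y_k-y_{k+1}^{*}\|$ into the desired $\|y_k-y_k^{*}\|$, I would split $y_k-y_{k+1}^{*}=(y_k-y_k^{*})+(y_k^{*}-y_{k+1}^{*})$ and apply Young's inequality with parameter $\sigma>0$:
\begin{equation*}
\left\| y_k-y_{k+1}^{*} \right\| ^2\le (1+\sigma)\left\| y_k-y_k^{*} \right\| ^2+\Bigl(1+\tfrac{1}{\sigma}\Bigr)\left\| y_k^{*}-y_{k+1}^{*} \right\| ^2.
\end{equation*}
Finally, by the Lipschitz continuity of $y^{*}$ established in \cref{lem:ysmooth}, $\|y_k^{*}-y_{k+1}^{*}\|\le (L_{gx}/\mu_g)\|x_k-x_{k+1}\|$. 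Combining these three pieces yields exactly the claimed inequality.

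I do not anticipate any serious obstacle here; all three ingredients are standard. The only mild subtlety is verifying that the step-size range $\theta\le 2/(\mu_g+L_{gy})$ is the correct regime for the sharp $(1-\theta\mu_g)$ contraction rather than the weaker $\max(|1-\theta\mu_g|,|1-\theta L_{gy}|)^2$ bound; this is exactly where the $2/(\mu_g+L_{gy})$ restriction enters. Everything else is algebraic bookkeeping.
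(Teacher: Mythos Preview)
Your proposal is correct and follows essentially the same route as the paper: the paper invokes the standard one-step contraction $\|y_{k+1}-y_{k+1}^*\|^2\le(1-\theta\mu_g)\|y_k-y_{k+1}^*\|^2$ for gradient descent on a strongly convex function (citing \citet{nesterov2018lectures}), then applies Young's inequality to split $y_k-y_{k+1}^*$ and (implicitly) \cref{lem:ysmooth} to bound $\|y_k^*-y_{k+1}^*\|$. Your write-up is slightly more explicit about the role of the step-size restriction and the invocation of \cref{lem:ysmooth}, but the argument is identical.
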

\begin{proof}
\cref{alg:LancBiO} executes a single-step gradient descent on the strongly convex function $g(x_{k+1},\cdot)$ during the outer iteration. Leveraging the established convergence properties of strongly convex functions~\citep{nesterov2018lectures}, we are thus able to derive the following.
\begin{equation*}
    \left\| y_{k+1}-y_{k+1}^{*} \right\| ^2\le \left( 1-\theta \mu _g \right) \left\| y_k-y_{k+1}^{*} \right\| ^2.
\end{equation*}
By Young's inequality that $\abs{a+b}^2\le (1+\sigma )\abs{a} ^2+\left( 1+\frac{1}{\sigma} \right) \left\| b \right\| ^2$ with any $\sigma>0$,
\begin{equation*}
    \left\| y_{k+1}-y_{k+1}^{*} \right\| ^2\le \left( 1+\sigma \right) \left( 1-\theta \mu _g \right) \left\| y_k-y_{k}^{*} \right\| ^2+\left( 1+\frac{1}{\sigma} \right) \left( 1-\theta \mu _g \right) \left( \frac{L_{gx}}{\mu _g} \right) ^2\left\| x_{k+1}-x_k \right\| ^2.
\end{equation*}
\end{proof}

In the context of bilevel optimization, we define the initial residual in the $(h+1)$-th epoch as
\begin{equation*}
    r_{h+1}:=b_{mh+1}-A_{mh+1}\bar{v}_h,
\end{equation*}
and the residual in $k$-th step
\begin{equation*}
    r_k:=\left( b_{mh+1}-A_{mh+1}\bar{v}_h \right) -A_{k}^{*}\Delta v_k.
\end{equation*}

Based on the boundness of $v_k$ in \cref{lem:bound_v} we can estimate
\begin{align}\label{eq:esti_v}
        \mu _g\left\| v_k-v_{k}^{*} \right\| \le& \left\| \left( b_{k}^{*}-A_{k}^{*}\bar{v}_h \right) -A_{k}^{*}\Delta v_k \right\| \nonumber
        \\
        =&\left\| \left( b_{k}^{*}-A_{k}^{*}\bar{v}_h \right) -A_{k}^{*}\Delta v_k-r_k+r_k \right\| \nonumber
        \\
        \le& \left\| b_{k}^{*}-b_{mh+1} \right\| +\left\| A_{k}^{*}-A_{mh+1} \right\| \left\| \bar{v}_h \right\| +\left\| r_k \right\| \nonumber
        \\
        \le&~L_{fy}\left\| \left( x_k,y_{k}^{*} \right) -\left( x_{mh+1},y_{mh+1} \right) \right\| \nonumber
        \\
        &+L_{gyy}\left\| \left( x_k,y_{k}^{*} \right) -\left( x_{mh+1},y_{mh+1} \right) \right\| \left\| \bar{v}_h \right\| +\left\| r_k \right\| \nonumber 
        \\
       = &\left( L_{fy}+L_{gyy}\left\| v_{mh} \right\| \right) \varepsilon _{j}^{h}+\left\| r_k \right\|  \nonumber
        \\
        \le &\left( L_{fy}+L_{gyy}C_v \right) \varepsilon _{j}^{h}+\left\| r_k \right\|,
\end{align}
which comes from $\norm{(A_j^*)^{-1}}\le \frac{1}{\mu_g}$, $v_k=\bar{v}_h+\Delta v_k$, and $\norm{v_k}\le C_v$.

\begin{lemma}\label{lem:hypergrad_esit_remove}
         Suppose Assumptions \ref{assu:f}, \ref{assu:g}, \ref{assu:strongg}, \ref{assu:boundfx} and \ref{assu:y_initial} hold. Within each epoch, we set the step size $\theta\sim\mathcal{O}({1}/{m})$ a constant for $y$ and the step size for $x$ as zero in the first $m_0$ steps, and the others as an appropriate constant $\lambda\sim\mathcal{O}({1}/{m^4})$, then the iterates
         \begin{equation*}
             \{x_k\}\ \text{ for }\ k=mh+j,\ h=0,1,2,\ldots,\ \text{ and }\ j=m_0+1,m_0+2,\ldots,m,
         \end{equation*}
         generated by \cref{alg:LancBiO} satisfy
    \begin{equation}\label{eq:hg_estimate_error}
    \begin{aligned}
        \left\| \widetilde{\nabla }\varphi \left( x_k \right) -\nabla \varphi \left( x_k \right) \right\| ^2 \le&~3L_{gx}^{2}\delta_k,
    \end{aligned}
    \end{equation}
    and 
    \begin{align}
    \delta _k\le&\ \iota ^{2\left( m-m_0 \right)}\left( \delta _{m\left( h-1 \right)} \right) +\iota ^{2m}\delta _{m\left( h-1 \right)}+\iota ^{2m}\delta _{m\left( h-2 \right)}    \nonumber
    \\
    &+12m^2\lambda ^2\omega _{\varphi}\Big ( \| \widetilde{\nabla }\varphi (x_{mh}) \| ^2 +\| \widetilde{\nabla }\varphi (x_{m( h-1 )}) \| ^2 +\| \widetilde{\nabla }\varphi (x_{m( h-2 )}) \| ^2   \nonumber
    \\
    &+\sum_{t=m_0}^{j-1}{\| \widetilde{\nabla }\varphi ( x_{mh+t} ) \| ^2} + \sum_{t=m_0}^{m-1}{\| \widetilde{\nabla }\varphi ( x_{m( h-1 ) +t} ) \| ^2}+\sum_{t=m_0}^{m-1}{\| \widetilde{\nabla }\varphi ( x_{m( h-2 ) +t} ) \| ^2} \Big), \label{eq:new_estimnate_delta}
    \end{align}
    where $0<\iota<1$, $m_0\sim\Omega(\log m)$ are constants, $m$ is the subspace dimension,
    \begin{equation*}
        \delta _k=\left( \frac{L_{fx}^{2}\mu _{g}^{2}+L_{gxy}^{2}C_{fg}^{2}}{L_{gx}^{2}\mu _{g}^{2}} \right) \left\| y_k-y_{k}^{*} \right\| ^2+\left\| v_k-v_{k}^{*} \right\| ^2.   
    \end{equation*}
\end{lemma}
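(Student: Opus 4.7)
The plan is to split the lemma into its two claims and treat them independently. Inequality~\eqref{eq:hg_estimate_error} is a direct consequence of Lipschitz continuity and the definition of $\delta_k$, so I would dispatch it first. The recursion~\eqref{eq:new_estimnate_delta} is the technical core: it requires tracing how $\|v_k-v_k^*\|$ and $\|y_k-y_k^*\|$ propagate within an epoch (via the dynamic Lanczos contraction of \cref{lem:descent_res_ap} and the single gradient step on $y$) and across two adjacent epochs (because the initial residual of one epoch is determined by the final $v$ of the previous epoch).

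For~\eqref{eq:hg_estimate_error}, I would decompose
\begin{equation*}
\widetilde{\nabla}\varphi(x_k) - \nabla\varphi(x_k) = \bigl[\nabla_x f(x_k,y_k) - \nabla_x f(x_k,y_k^*)\bigr] - \nabla^2_{xy}g(x_k,y_k)(v_k-v_k^*) - \bigl[\nabla^2_{xy}g(x_k,y_k) - \nabla^2_{xy}g(x_k,y_k^*)\bigr]v_k^*,
\end{equation*}
then apply $\|a+b+c\|^2\le 3(\|a\|^2+\|b\|^2+\|c\|^2)$, the Lipschitz bounds from Assumptions~\ref{assu:f} and~\ref{assu:g}, and $\|v_k^*\|\le C_{fy}/\mu_g$ (from strong convexity of $g(x,\cdot)$ together with \cref{assu:f}). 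The three terms combine as $3L_{fx}^2\|y_k-y_k^*\|^2 + 3L_{gx}^2\|v_k-v_k^*\|^2 + 3L_{gxy}^2(C_{fg}/\mu_g)^2\|y_k-y_k^*\|^2$, which is exactly $3L_{gx}^2\delta_k$ after regrouping.

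For~\eqref{eq:new_estimnate_delta}, I would bound the two constituents of $\delta_k$ in turn. For the $y$-part, iterating \cref{lem:y_descent} with a suitable choice of $\sigma\sim 1/m$ (so that $(1+\sigma)(1-\theta\mu_g)$ still lies below $1-\Theta(\theta\mu_g)$) and substituting $\|x_{k+1}-x_k\|^2 = \lambda^2\|\widetilde{\nabla}\varphi(x_k)\|^2$ yields a geometric-descent bound in terms of $\|y_{mh}-y_{mh}^*\|^2$ (which is part of $\delta_{mh}$) plus a sum of $\lambda^2\|\widetilde{\nabla}\varphi\|^2$ collected across the current epoch. For the $v$-part, apply~\eqref{eq:esti_v}, i.e.\ $\mu_g\|v_k-v_k^*\|\le (L_{fy}+L_{gyy}C_v)\varepsilon_j^h + \|r_k\|$. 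The residual $\|r_k\|$ is controlled by \cref{lem:descent_res_ap}; after $j\ge m_0 \sim \Omega(\log m)$ inner steps the contraction $\bigl(\tfrac{\sqrt{\tilde{\kappa}(j)}-1}{\sqrt{\tilde{\kappa}(j)}+1}\bigr)^j$ and the perturbation $\sqrt{j}L_{gyy}\varepsilon_j\tilde{\kappa}(j)$ can be absorbed into a single constant $\iota\in(0,1)$ times the initial residual $\|r_{h+1}\| = \|b_{mh+1}-A_{mh+1}v_{mh}\|$. Writing $r_{h+1}=A_{mh+1}^*(v_{mh}^*-v_{mh})$ plus perturbations bounded by $\varepsilon_m^{h-1}$ via Lipschitz continuity brings $\delta_{m(h-1)}$ into the bound; iterating the same identity once more expresses the residual through $v_{m(h-1)}$, producing the $\delta_{m(h-2)}$ contribution with cumulative factor $\iota^{2m}$. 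The remaining $\varepsilon_j^h$ terms are bounded by $\lambda\sum_s\|\widetilde{\nabla}\varphi(x_{mh+s})\|$ together with the already-absorbed $\|y-y^*\|$ contributions; gathering everything yields the displayed bound with the six-epoch sum of $\|\widetilde{\nabla}\varphi\|^2$ and the prefactor $12m^2\lambda^2\omega_\varphi$.

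The hard part is the cross-epoch bookkeeping. The contraction in \cref{lem:descent_res_ap} carries a $\sqrt{j}L_{gyy}\varepsilon_j\tilde{\kappa}(j)$ perturbation that grows polynomially in $m$, and the residual-level recursion multiplies these constants across epochs; forcing $m_0 \sim \Omega(\log m)$ is what lets $\iota^{m_0}$ dominate the $O(m^2)$-scale prefactors and keep $\iota<1$. Stopping the unrolling at $\delta_{m(h-2)}$ rather than continuing indefinitely is balanced by the factor $\iota^{2m}$, which already compresses the pre-$(h-2)$ history into a negligible tail. A secondary delicate point is separating in $\varepsilon_j^h$ the contribution of $\|x\|$-displacements (governed by $\lambda$ and the hyper-gradient) from the $\|y-y^*\|$ contribution (governed by \cref{lem:y_descent}); handling this without double counting is what pins down the exact constant $12m^2$ in front of the hyper-gradient sum.
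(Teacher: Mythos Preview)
Your plan for both claims is the paper's own approach: the three-term Lipschitz split for~\eqref{eq:hg_estimate_error}, and for~\eqref{eq:new_estimnate_delta} the combination of~\eqref{eq:esti_v} with \cref{lem:descent_res_ap} for the $v$-part, iterated \cref{lem:y_descent} (with $\sigma$ chosen so that $(1+\sigma)(1-\theta\mu_g)<1$) for the $y$-part, and the bound $\|r_{h+1}\|\lesssim L_{gy}\|v_{mh}-v_{mh}^*\|+\text{(boundary step)}$ to link epochs.

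One piece of the cross-epoch bookkeeping is misattributed, though. You say ``writing $r_{h+1}=A_{mh+1}^*(v_{mh}^*-v_{mh})+\text{perturbations}$ brings $\delta_{m(h-1)}$ into the bound; iterating once more through $v_{m(h-1)}$ produces the $\delta_{m(h-2)}$ contribution''. That is off by one epoch at each step: the main term in $r_{h+1}$ involves $v_{mh}$, so it contributes (the $v$-part of) $\delta_{mh}$, not $\delta_{m(h-1)}$; iterating the residual identity once more would reach $v_{m(h-1)}$ and hence $\delta_{m(h-1)}$, not $\delta_{m(h-2)}$. In the paper the $\delta_{m(h-2)}$ term does not come from the $v$-residual recursion at all. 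The argument first produces the intermediate bound
\[
\delta_k \;\le\; \iota^{2(j-m_0)}\delta_{mh} \;+\; \iota^{2m}\|y_{m(h-1)}-y_{m(h-1)}^*\|^2 \;+\; 6m^2\lambda^2\omega_\varphi(\cdots),
\]
where the $\|y_{m(h-1)}\|^2$ term arises because $(\varepsilon_j^h)^2$ already contains $\|y_{mh}-y_{mh}^*\|^2$ (from~\eqref{eq:new_estimate_eps}), and that quantity is pushed back one more epoch via \cref{lem:y_descent}. Only then is $\delta_{mh}$ itself expanded by the \emph{same} intermediate bound (at $j=m$), yielding $\iota^{2(m-m_0)}\delta_{m(h-1)}$ plus $\iota^{2m}\|y_{m(h-2)}-y_{m(h-2)}^*\|^2$; the two leftover $\|y\|^2$ terms are then majorized by $\delta_{m(h-1)}$ and $\delta_{m(h-2)}$. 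So the unrolling is a single expansion of the composite $\delta$, not a double unrolling of the residual, and the $\delta_{m(h-2)}$ term enters through $y$-contraction, not through $v$. Your remark that $m_0\sim\Omega(\log m)$ is needed so that $\iota^{2m_0}$ absorbs the $\mathcal{O}(m)$ prefactors (coming from $\omega_\varepsilon$) is exactly right.
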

\begin{proof}
By \eqref{eq:esti_v}, conclusion from \cref{lem:vdescent} and the Young's inequality, it holds that
\begin{equation*}
    \begin{aligned}
        \left\| v_k-v_{k}^{*} \right\| ^2&\le 2\left( \frac{L_{fy}+L_{gyy}C_v}{\mu _g} \right) ^2\left( \varepsilon _{j}^{h} \right) ^2+2\frac{1}{\mu _{g}^{2}}\left\| r_k \right\| ^2
        \\
        &\le 2\left( \frac{L_{fy}+L_{gyy}C_v}{\mu _g} \right) ^2\left( \varepsilon _{j}^{h} \right) ^2+2\frac{1}{\mu _{g}^{2}}\left( 2\sqrt{\tilde{\kappa}\left( j \right)}\left( \frac{\sqrt{\tilde{\kappa}\left( j \right)}-1}{\sqrt{\tilde{\kappa}\left( j \right)}+1} \right) ^j+\sqrt{j}L_{gyy}\varepsilon _{j}^{h}\tilde{\kappa}\left( j \right) \right) ^2\left\| r_{k0} \right\| ^2
        \\
        &\le 2\left( \frac{L_{fy}+L_{gyy}C_v}{\mu _g} \right) ^2\left( \varepsilon _{j}^{h} \right) ^2+4\frac{1}{\mu _{g}^{2}}\left( 4\tilde{\kappa}\left( j \right) \left( \frac{\sqrt{\tilde{\kappa}\left( j \right)}-1}{\sqrt{\tilde{\kappa}\left( j \right)}+1} \right) ^{2j}+\left( \sqrt{j}\tilde{\kappa}\left( j \right) L_{gyy} \right) ^2\left( \varepsilon _{j}^{h} \right) ^2 \right) \left\| r_{k0} \right\| ^2.
    \end{aligned}
\end{equation*}
An estimate can be made for $\norm{r_{h+1}}$,
\begin{equation*}
    \begin{aligned}
   \norm{r_{h+1}} =&\left\| b_{mh+1}-A_{mh+1}\bar{v}_h \right\| 
    \\
    =&\left\| b_{mh+1}-b_{mh}^{*}+A_{mh}^{*}v_{mh}^{*}-A_{mh+1}v_{mh} \right\| 
    \\
    \le&~L_{fy}\left\| \left( x_{mh+1},y_{mh+1} \right) -\left( x_{mh},y_{mh}^{*} \right) \right\| 
    \\
    &+\frac{C_{fy}}{\mu _g}L_{gyy}\left\| \left( x_{mh+1},y_{mh+1} \right) -\left( x_{mh},y_{mh}^{*} \right) \right\| +L_{gy}\left\| v_{mh}^{*}-v_{mh} \right\| 
    \\
    =&~L_{gy}\left\| v_{mh}-v_{mh}^{*} \right\| +\frac{\mu _gL_{fy}+C_{fy}L_{gyy}}{\mu _g}\left( 1+\frac{L_{gx}}{\mu _g} \right) \left\| x_{mh+1}-x_{mh} \right\|, 
    \end{aligned}
\end{equation*}
and thus
\begin{align}\label{eq:vk_vsk}
        \left\| v_k-v_{k}^{*} \right\| ^2 \le&~\omega _{\varepsilon}\left( \varepsilon _{j}^{h} \right) ^2+\frac{L_{gy}^{2}}{\mu _{g}^{2}}\tilde{\kappa}\left( j \right) \left( \frac{\sqrt{\tilde{\kappa}\left( j \right)}-1}{\sqrt{\tilde{\kappa}\left( j \right)}+1} \right) ^{2j}\left\| v_{mh}-v_{mh}^{*} \right\| ^2 \nonumber
        \\
        &+\frac{\tilde{\kappa}\left( j \right)}{\mu _{g}^{2}}\left( \frac{\mu _gL_{fy}+C_{fy}L_{gyy}}{\mu _g}\left( 1+\frac{L_{gx}}{\mu _g} \right) \right) ^2\left( \frac{\sqrt{\tilde{\kappa}\left( j \right)}-1}{\sqrt{\tilde{\kappa}\left( j \right)}+1} \right) ^{2j}\left\| x_{mh+1}-x_{mh} \right\| ^2,
\end{align}
where
\begin{equation*}
    \omega _{\varepsilon}=2\left( \frac{L_{fy}+L_{gyy}C_v}{\mu _g} \right) ^2+\frac{4}{\mu _{g}^{2}}\left( \sqrt{m}\tilde{\kappa}\left( m \right) L_{gyy} \right) ^2\left( L_{gy}^{2}C_{v}^{2}+\left( \frac{\mu _gL_{fy}+C_{fy}L_{gyy}}{\mu _g}\left( 1+\frac{L_{gx}}{\mu _g} \right) \right) ^2C_{s}^{2} \right) \sim \mathcal{O} \left( m \right).
\end{equation*}
By definition of $\varepsilon_j^h$, the update rule of $x$, and Young's inequality, it holds that
\begin{equation*}
    \left( \varepsilon _{j}^{h} \right) ^2\le 2m\lambda ^2\left( 1+\frac{L_{gx}}{\mu _g} \right) ^2\sum_{i=1}^{j-1}{\left\| \widetilde{\nabla }\varphi \left( x_{mh+i} \right) \right\| ^2}+2\left\| y_{mh+i\left( j \right)}-y_{mh+i\left( j \right)}^{*} \right\| ^2
\end{equation*}
for some $m_0+1\le i(j)\le j$. Then we apply the descent property of $\|y_{k}-y_{k}^*\|$ (\cref{lem:y_descent}) recursively to derive
\begin{align}
    \left( \varepsilon _{j}^{h} \right) ^2\le&\ 2m\lambda ^2\left( 1+\frac{L_{gx}}{\mu _g} \right) ^2\sum_{i=1}^{j-1}{\left\| \widetilde{\nabla }\varphi \left( x_{mh+i} \right) \right\| ^2}+2\left( 1+\sigma \right) ^{i\left( j \right)}\left( 1-\theta \mu _g \right) ^{i\left( j \right)}\left\| y_{mh}-y_{mh}^{*} \right\| ^2    \nonumber
    \\
    &+2\left( 1+\frac{1}{\sigma} \right) \left( 1-\theta \mu _g \right) \left( \frac{L_{gx}}{\mu _g} \right) ^2\sum_{r=0}^{i\left( j \right)}{\left( 1+\sigma \right) ^r\left( 1-\theta \mu _g \right) ^r\left\| x_{k-r}-x_{k-r-1} \right\| ^2}    \nonumber
    \\
    \le&\ 2\lambda ^2\left( 1+\frac{L_{gx}}{\mu _g} \right) ^2\left( m+\left( 1+\frac{1}{\sigma} \right) \left( 1-\theta \mu _g \right) \right) \sum_{i=1}^{j-1}{\left\| \widetilde{\nabla }\varphi \left( x_{mh+i} \right) \right\| ^2}+2\left\| y_{mh}-y_{mh}^{*} \right\| ^2    \nonumber
    \\
    \le&\ 2\lambda ^2\left( 1+\frac{L_{gx}}{\mu _g} \right) ^2\left( m+\left( 1+\frac{1}{\sigma} \right) \left( 1-\theta \mu _g \right) \right) \sum_{i=1}^{j-1}{\left\| \widetilde{\nabla }\varphi \left( x_{mh+i} \right) \right\| ^2}    \nonumber
    \\
    &+2\left( 1+\sigma \right) ^{m_0}\left( 1-\theta \mu _g \right) ^{m_0}\left\| y_{mh}-y_{mh}^{*} \right\| ^2. \label{eq:new_estimate_eps}
\end{align}
Since \cref{lem:bound_v} reveals that under the appropriate step-size setting $\lambda\sim\mathcal{O}(\frac{1}{m^4})$ and $\theta\sim\mathcal{O}(\frac{1}{m})$, $\tilde{\kappa}(j)\le \tilde{\kappa}$ for $\tilde{\kappa}\sim\Omega(\frac{L_{gy}}{\mu_{gy}})$, we also choose $\sigma>0$ such that
\begin{equation}\label{eq:iota_remove}
    \iota :=\max \left\{ \frac{\sqrt{\tilde{\kappa}}-1}{\sqrt{\tilde{\kappa}}+1},\sqrt{\left( 1+\sigma \right) \left( 1-\theta \mu _g \right)} \right\} <1
\end{equation}
Additionally, set the warm-up steps $m_0$ to satisfy
\begin{equation*}
\begin{aligned}
    \iota ^{-2m_0}\ge \max \Bigg\{ \left( \frac{L_{fx}^{2}\mu _{g}^{2}+L_{gxy}^{2}C_{fg}^{2}}{L_{gx}^{2}\mu _{g}^{2}} \right)^{-1} \left( \frac{L_{fx}^{2}\mu _{g}^{2}+L_{gxy}^{2}C_{fg}^{2}}{L_{gx}^{2}\mu _{g}^{2}}+m \right) ,
    \\
    \frac{L_{gy}^{2}}{\mu _{g}^{2}}\tilde{\kappa} \left( \frac{\mu _gL_{fy}+C_{fy}L_{gyy}}{\mu _g}\left( 1+\frac{L_{gx}}{\mu _g} \right) \right) ^2\!\!,\,2\omega_\varphi \ \Bigg \} ,
\end{aligned}
\end{equation*}
which means $m_0\sim \Omega \left(\log  m  \right)$. In this manner, adding $\left( \frac{L_{fx}^{2}\mu _{g}^{2}+L_{gxy}^{2}C_{fg}^{2}}{L_{gx}^{2}\mu _{g}^{2}} \right) \left\| y_k-y_{k}^{*} \right\| ^2$ on both sides of \eqref{eq:vk_vsk} and incorporating the estimation for $\epsilon_j^h$~\eqref{eq:new_estimate_eps} yield that
\begin{align}
    \delta _k\le&\ \iota ^{2\left( j-m_0 \right)}\left( \delta _{mh} \right) +\left\| y_{mh}-y_{mh}^{*} \right\| ^2+6m^2\lambda ^2\omega _{\varphi}\left( \sum_{t=m_0}^{j-1}{\left\| \widetilde{\nabla }\varphi \left( x_{mh+t} \right) \right\| ^2}+\left\| \widetilde{\nabla }\varphi (x_{mh}) \right\| ^2 \right) \nonumber
    \\
    \le&\ \iota ^{2\left( j-m_0 \right)}\left( \delta _{mh} \right) +\iota ^{2m}\| y_{m\left( h-1 \right)}-y_{m\left( h-1 \right)}^{*} \| ^2 \nonumber
    \\
    &+6m^2\lambda ^2\omega _{\varphi} \Big( \sum_{t=m_0}^{j-1}{\| \widetilde{\nabla }\varphi ( x_{mh+t} ) \| ^2}+\| \widetilde{\nabla }\varphi (x_{mh}) \| ^2 \nonumber
    \\
    &\quad\quad\quad\quad\quad + \sum_{t=m_0}^{m-1}{\| \widetilde{\nabla }\varphi ( x_{m( h-1 ) +t} ) \| ^2}+\| \widetilde{\nabla }\varphi (x_{m( h-1 )}) \| ^2  \Big) \label{eq:estimate_delta_remove_1},
\end{align}
where we apply \cref{lem:y_descent} recursively to obtain the second inequality, and adopt the notation
\begin{equation*}
    \begin{aligned}
       \omega _{\varphi}:=\max &\left\{ \frac{\omega _{\varepsilon}}{m}\left( 1+\frac{L_{gx}}{\mu _g} \right) ^2,\frac{\tilde{\kappa} \iota }{m^2\mu _{g}^{2}}\left( \frac{\mu _gL_{fy}+C_{fy}L_{gyy}}{\mu _g}\left( 1+\frac{L_{gx}}{\mu _g} \right) \right)^2, \right.
       \\
        &\left. \frac{1}{m^2}\left( 1+\frac{1}{\sigma} \right) \left( 1-\theta \mu _g \right) \left( \frac{L_{fx}^{2}\mu _{g}^{2}+L_{gxy}^{2}C_{fg}^{2}}{L_{gx}^{2}\mu _{g}^{2}}+m \right) \left( \frac{L_{gx}}{\mu _g} \right) ^2 \right\} \sim \mathcal{O} \left( 1 \right).     
    \end{aligned}
\end{equation*}
Consequently, expanding $\delta_{mh}$ in \eqref{eq:estimate_delta_remove_1} in the same way derives \eqref{eq:new_estimnate_delta}. Regrading the upper-bound of the hyper-gradient estimation error, we have
\begin{align*}
        \left\| \widetilde{\nabla }\varphi \left( x_k \right) -\nabla \varphi \left( x_k \right) \right\| ^2 \le&~3\left\| \nabla _xf\left( x_k,y_k \right) -\nabla _xf\left( x_k,y_{k}^{*} \right) \right\| ^2+3\left\| \nabla _{xy}^{2}g\left( x_k,y_k \right) \right\| ^2\left\| v_k-v_{k}^{*} \right\| ^2 \nonumber
        \\
        &+3\left\| \nabla _{xy}^{2}g\left( x_k,y_k \right) -\nabla _{xy}^{2}g\left( x_k,y_{k}^{*} \right) \right\| ^2\left\| v_{k}^{*} \right\| ^2 \nonumber
        \\
        \le&~3L_{fx}^{2}\left\| y_k-y_{k}^{*} \right\| ^2+3L_{gx}^{2}\left\| v_k-v_{k}^{*} \right\| ^2+3L_{gxy}^{2}\left( \frac{C_{fy}}{\mu _g} \right) ^2\left\| y_k-y_{k}^{*} \right\| ^2 \nonumber
        \\
       = &~3L_{gx}^{2}\delta _k.
\end{align*}
\end{proof}

\begin{theorem}
    Suppose Assumptions \ref{assu:f}, \ref{assu:g}, \ref{assu:strongg}, \ref{assu:boundfx} and \ref{assu:y_initial} hold. Within each epoch, if we set the step size $\theta\sim\mathcal{O}({1}/{m})$ a constant for $y$, and the step size for $x$ as zero in the first $m_0$ steps and the others as an appropriate constant $\lambda\sim\mathcal{O}({1}/{m^4})$, the iterates $\{x_k\}$ generated by \cref{alg:LancBiO} satisfy
    \begin{equation*}
        \frac{m}{K\left( m-m_0 \right)} \hspace{-3mm}\sum_{\footnotesize \substack{k=0,\\
	 (k\,\emph{\texttt{mod}}\,m)>m_0 }}^K \hspace{-3mm}{\left\| \nabla \varphi \left( x_k \right) \right\| ^2} =  \mathcal{O} \left( \frac{m\lambda ^{-1}}{K\left( m-m_0 \right)} \right),
    \end{equation*}
    where $m_0\sim\Omega(\log m)$ is a constant and $m$ is the subspace dimension.
\end{theorem}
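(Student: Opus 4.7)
The plan is to combine the smoothness of the hyper-objective with the hyper-gradient error bound \eqref{eq:hg_estimate_error} of \cref{lem:hypergrad_esit_remove} through a Lyapunov function argument. First, I would invoke \cref{lem:phismooth} to derive, for each outer iteration at which $\lambda>0$ (equivalently $(k\bmod m)>m_0$),
\begin{equation*}
\varphi(x_{k+1}) \le \varphi(x_k) - \tfrac{\lambda}{2}\|\nabla\varphi(x_k)\|^2 + \tfrac{\lambda}{2}\|\widetilde\nabla\varphi(x_k)-\nabla\varphi(x_k)\|^2 - \left(\tfrac{\lambda}{2} - \tfrac{L_\varphi \lambda^2}{2}\right)\|\widetilde\nabla\varphi(x_k)\|^2,
\end{equation*}
noting that $\varphi(x_{k+1})=\varphi(x_k)$ during the warm-up where $\lambda=0$. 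Choosing $\lambda\le 1/L_\varphi$ (consistent with $\lambda\sim\mathcal{O}(1/m^4)$) renders the last term nonpositive, while \eqref{eq:hg_estimate_error} replaces the middle term by $\tfrac{3\lambda L_{gx}^2}{2}\delta_k$.

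Next, I would construct a Lyapunov sequence $\mathcal{L}_k=\varphi(x_k)+\mathcal{D}_k$ where $\mathcal{D}_k$ is a weighted combination of $\delta$ evaluated at the end of the current and the two preceding epochs, namely of the form $C_0\delta_k+C_1\delta_{k-m}+C_2\delta_{k-2m}$. The three-epoch memory is forced by the recursion \eqref{eq:new_estimnate_delta}, which expresses $\delta_k$ in terms of $\delta$-values from the two previous epochs together with a quadratic form in $\{\widetilde\nabla\varphi\}$ at past iterates. The coefficients $C_0,C_1,C_2$ are to be chosen so that (i) the cross-epoch $\delta$ contributions produced when expanding $\mathcal{L}_{k+1}-\mathcal{L}_k$ are strictly dominated, using the contraction factor $\iota^{2(m-m_0)}<1$ guaranteed by \eqref{eq:iota_remove} together with $m_0\sim\Omega(\log m)$, and (ii) the $\mathcal{O}(m^2\lambda^2)\|\widetilde\nabla\varphi\|^2$ terms from \eqref{eq:new_estimnate_delta} are absorbed by the $-\tfrac{\lambda}{2}\|\nabla\varphi(x_k)\|^2$ gain from the smoothness inequality, after splitting $\|\widetilde\nabla\varphi\|^2\le 2\|\nabla\varphi\|^2+6L_{gx}^2\delta_k$ via \eqref{eq:hg_estimate_error}. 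This is where the scaling $\lambda\sim\mathcal{O}(1/m^4)$ becomes crucial: we need $\lambda\cdot m^2\lambda^2\omega_\varphi\ll \lambda$, that is $\lambda^2\ll 1/m^2$.

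Finally, once $\mathcal{L}_k$ satisfies a clean descent $\mathcal{L}_{k+1}\le\mathcal{L}_k-\tfrac{\lambda}{4}\|\nabla\varphi(x_k)\|^2$ on the active indices (with $\mathcal{L}_{k+1}\le\mathcal{L}_k$ during warm-up, since $\varphi$ is unchanged and $\delta$ can only contract geometrically while $x$ is frozen, by \cref{lem:y_descent} and the residual bound in \cref{lem:descent_res_ap}), I would telescope over $k=0,\dots,K$ retaining only the $K(m-m_0)/m$ active indices, and use $\mathcal{L}_0-\inf_k\mathcal{L}_k=\mathcal{O}(1)$, which follows from the boundedness of $v_k$ established in \cref{lem:bound_v} and the lower-boundedness of $\varphi$, to obtain $\sum_{(k\bmod m)>m_0}\|\nabla\varphi(x_k)\|^2 = \mathcal{O}(\lambda^{-1})$; dividing by the active count $K(m-m_0)/m$ yields the stated bound. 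The main obstacle is the construction and coefficient tuning of $\mathcal{D}_k$: the three-epoch coupling in \eqref{eq:new_estimnate_delta} prevents any one-epoch Lyapunov function from closing, so choosing $C_0,C_1,C_2$ and the step-size constants so that requirements (i) and (ii) hold simultaneously, within the constraint that $\iota^{2(m-m_0)}$ is small enough, is the delicate part; the remainder is routine telescoping.
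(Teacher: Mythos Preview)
Your overall strategy---smoothness descent plus the error bound \eqref{eq:hg_estimate_error} plus telescoping---matches the paper, but the Lyapunov construction differs. The paper uses the simpler $\mathcal{L}_k=\varphi(x_k)+\delta_k$ with \emph{no} multi-epoch memory. Rather than engineering $\mathcal{D}_k$ to absorb the cross-epoch coupling step by step, the paper telescopes $\sum_{k\in\mathcal{I}}(\mathcal{L}_{k+1}-\mathcal{L}_k)$ only over the active set $\mathcal{I}=\{k:(k\bmod m)>m_0\}$, substitutes the recursion \eqref{eq:new_estimnate_delta} for every $k\in\mathcal{I}$, and observes that the resulting positive $\delta$-mass concentrates on the epoch ends $\{\delta_{me}\}$ with total weight at most $2m\iota^{2(m-m_0)}$ per epoch. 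Imposing $m\iota^{2(m-m_0)}<1/4$ (this is exactly why $m_0\sim\Omega(\log m)$) makes that mass strictly smaller than the $-\sum_{k\in\mathcal{I}}\delta_k$ already present in the telescope, so the $\delta$-terms cancel without any additional bookkeeping. The $\|\widetilde\nabla\varphi\|^2$ contributions are then handled via $\|\widetilde\nabla\varphi\|^2\le 2\|\nabla\varphi\|^2+6L_{gx}^2\delta_k$, as you suggest, and the step-size constraint emerging here is $\lambda\lesssim 1/(m^3\omega_\varphi)$; the $\mathcal{O}(1/m^4)$ is inherited from \cref{lem:bound_v}, not from this final step.

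Your three-epoch Lyapunov could in principle be made to close, but it is more work than necessary and has two soft spots. First, \eqref{eq:new_estimnate_delta} is an \emph{absolute} bound on $\delta_k$ in terms of epoch-boundary values $\delta_{mh},\delta_{m(h-1)},\delta_{m(h-2)}$, not an incremental relation between $\delta_{k+1}$ and $\delta_k$; this does not naturally produce a one-step descent for $\mathcal{D}_k$, and you would end up summing over each epoch and comparing totals anyway---which is precisely what the paper does directly with the one-term Lyapunov. Second, your claim that $\mathcal{L}_{k+1}\le\mathcal{L}_k$ during warm-up is not obviously true for the $\|v_k-v_k^*\|^2$ component: the first few Lanczos steps after a restart need not be monotone. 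The paper avoids this issue entirely: since $x$ is frozen during warm-up, $\sum_{k\in\mathcal{I}}(\varphi(x_{k+1})-\varphi(x_k))$ already telescopes fully, and the $\delta$-accounting is handled globally through the recursion rather than through per-step monotonicity.
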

\begin{proof}
Consider the Lyapunov function $\mathcal{L}_k:=\varphi(x_k)+\delta_k$. According to \cref{lem:hypersmooth}, a gradient descent step leads to the decrease in the hyper-function:
\begin{equation}\label{eq:varphi_descent}
    \begin{aligned}
        \varphi \left( x_{k+1} \right)-\varphi \left( x_k \right) \le&~\left< \nabla \varphi \left( x_k \right) ,x_{k+1}-x_k \right> +\frac{L_{\varphi}}{2}\left\| x_{k+1}-x_k \right\| ^2.
    \end{aligned}
\end{equation}
Then, telescoping $\mathcal{L}_{k+1}-\mathcal{L}_k$ over the index set $\mathcal{I}:=\{k:\,0\le k\le K,\,(k\,{\texttt{mod}}\,m)>m_0 \}$,
\begin{align}
        &\ \sum_{k\in\mathcal{I}}{\mathcal{L} _{k+1}-\mathcal{L} _k}   \nonumber
        \\
        =&\ \sum_{k\in\mathcal{I}}{\varphi ( x_{k+1} ) -\varphi ( x_k )}+\!\!\!\!\!\!\!\!\!\sum_{{\footnotesize \substack{k=1,\\
	 ((k-1)\,{\texttt{mod}}\,m)>m_0 }}}^{K+1}{\!\!\!\!\!\!\!\!\!\delta _k}\ -\sum_{k\in\mathcal{I}}{\delta _k}   \nonumber
        \\
        \overset{(i)}{\le}&\ \sum_{k\in\mathcal{I}}{\left< \nabla \varphi ( x_k ) ,x_{k+1}-x_k \right> +( \frac{L_{\varphi}}{2}+36m^3\omega _{\varphi} ) \| x_{k+1}-x_k \| ^2}+\sum_{e=0}^{h-1}{2m\iota ^{2( m-m_0 )}\delta _{me}}-\sum_{k\in\mathcal{I}}{\delta _k}     \nonumber
        \\
        \overset{(ii)}{\le}&\ \sum_{k\in\mathcal{I}}-\kh{ \frac{\lambda}{2}-\lambda ^2( L_{\varphi}+72m^3\omega _{\varphi} ) } \| \nabla \varphi ( x_k ) \| ^2 + \kh{\frac{\lambda}{2}+\lambda ^2( L_{\varphi}+72m^3\omega _{\varphi} )} \| \nabla \varphi ( x_k ) -\widetilde{\nabla }\varphi ( x_k ) \| ^2   \nonumber
        \\
        &+\sum_{e=0}^{h-1}{2m\iota ^{2( m-m_0 )}\delta _{me}}-\sum_{k\in\mathcal{I}}{\delta _k}    \nonumber
        \\
        \overset{(iii)}{\le}&\ \sum_{k\in\mathcal{I}}{- \kh{\frac{\lambda}{2}-\lambda ^2( L_{\varphi}+72m^3\omega _{\varphi} )} \| \nabla \varphi ( x_k ) \| ^2}  \nonumber
        \\
        &+3L_{gx}^{2}\kh{ \frac{\lambda}{2}+\lambda ^2( L_{\varphi}+72m^3\omega _{\varphi} )} \sum_{k\in\mathcal{I}}{\delta _k}+\sum_{e=0}^{h-1}{2m\iota ^{2( m-m_0 )}\delta _{me}}-\sum_{k\in\mathcal{I}}{\delta _k},      \label{eq:LkLk}
\end{align}
where $(i)$ follows from inequalities \eqref{eq:new_estimnate_delta} and \eqref{eq:varphi_descent}; $(ii)$ results from the update rule $x_{k+1}=x_k-\lambda\widetilde{\nabla}\varphi(x_k)$ and Young's inequality; $(iii)$ comes from \eqref{eq:hg_estimate_error}. Taking the coefficients of $\delta_k$ into account, we set the dimension parameters $(m,m_0)$ satisfying $m\iota ^{2\left( m-m_0 \right)}<1/4$ and the step size $\lambda$ such that
\begin{equation}
\lambda \le \min \left\{ \frac{1}{6L_{gx}^{2}},\ \frac{1}{\left( 12\left( L_{\varphi}+72m^3\omega _{\varphi} \right) L_{gx}^{2} \right) ^{1/2}},\ \frac{1}{4\left( L_{\varphi}+72m^3\omega _{\varphi} \right)} \right\}.
\end{equation}
In this way, we obtain the following result from \eqref{eq:LkLk},
\begin{equation*}
    \mathcal{L} _{K+1}-\mathcal{L} _0=\sum_{{\footnotesize \substack{k=0,\\
	 (k\,{\texttt{mod}}\,m)>m_0 }}}^K{\mathcal{L} _{k+1}-\mathcal{L} _k}\le \sum_{{\footnotesize \substack{k=0,\\
	 (k\,{\texttt{mod}}\,m)>m_0 }}}^K{-\left( \frac{\lambda}{2}-\lambda ^2\left( L_{\varphi}+72m^3\omega _{\varphi} \right) \right) \left\| \nabla \varphi \left( x_k \right) \right\| ^2}.
\end{equation*}
Rearrange the above inequality and denote $\varphi^*:= \min_{x\in\mathbb{R}^{d_x}}\varphi(x)$,
\begin{equation*}
     \sum_{{\footnotesize \substack{k=0,\\
	 (k\,{\texttt{mod}}\,m)>m_0 }}}^K{\left\| \nabla \varphi \left( x_k \right) \right\| ^2}\le \frac{4\left( \varphi \left( x_0 \right) -\varphi ^* \right)}{\lambda}+\frac{4\delta _0}{\lambda},
\end{equation*}
which completes the proof by dividing both sides by $\frac{m-m_0}{m}K$.
\end{proof}

\section{Details on Experiments}\label{sec:details_exp}

\subsection{General settings}
We conduct experiments to empirically validate the performance of the proposed algorithms. We test on~a synthetic problem, a hyper-parameters selection task, and a data hyper-cleaning task. We compare the proposed SubBiO and LancBiO with the existing algorithms in bilevel optimization: stocBiO \citep{ji2021stocbio}, AmIGO-GD and AmIGO-CG \citep{arbel2022amortized}, SOBA \citep{dagreou2022soba} and TTSA \citep{hong2023ttsa}, \revise{F2SA \citep{kwon2023f2sa} and HJFBiO \citep{huang2024HJFBIO}}. The experiments are produced on a workstation that consists of two Intel® Xeon® Gold 6330 CPUs (total 2$\times$28 cores), 512GB RAM, and one NVIDIA A800 (80GB memory) GPU. The synthetic problem and the deep learning experiments are carried out on the CPUs and the GPU, respectively. For wider accessibility and application, we have made the code available on \href{https://github.com/UCAS-YanYang/LancBiO}{https://github.com/UCAS-YanYang/LancBiO}.

For the proposed LancBiO, we initiate the subspace dimension at $1$, and gradually increase it to $m=10$ for the deep learning experiments and to $m=80$ for the synthetic problem. For all the compared algorithms, we employ a grid search strategy to optimize the parameters. The optimal parameters yield the lowest loss. The experiment results are averaged over~$10$ runs. {Note that \cref{assu:y_initial} for initialization is not used in practice, for which we treat it as a theoretical assumption rather than incorporating it into \cref{alg:LancBiO} in this paper.}

In this paper, we consider the algorithms SubBiO and LancBiO in the deterministic scenario, so we initially compare them against the baseline algorithms with a full batch (i.e., deterministic gradient). In this setting, LancBiO yields favorable numerical results. Moreover, in the data hyper-cleaning task, to facilitate a more effective comparison with algorithms designed for stochastic applications, we implement all compared methods with a small batch size, finding that the proposed methods show competitive performance.

\subsection{Data hyper-cleaning}
The data hyper-cleaning task~\citep{shaban2019truncated}, conducted on the MNIST dataset~\citep{lecun1998mnist}, aims to train a classifier in~a corruption scenario, where the labels of the training data are randomly altered to incorrect classification numbers at~a certain probability~$p$, referred to as the corruption rate. The task is formulated as follows,
\begin{equation*}
\begin{array}{cl}
         \min\limits_\lambda& \mathcal{L}_{val}(\lambda, w^*) := \frac{1}{\left|\mathcal{D}_{\text{val}}\right|} \sum_{(x_i, y_i) \in \mathcal{D}_{\text{val}}} L(w^* x_i, y_i) 
         \\[5mm]
         \mathrm{s.\,t.}&
        \begin{aligned}[t]
            w^* &= \argmin_w\ \mathcal{L}_{tr}(w, \lambda) \\
            :&= \frac{1}{\left|\mathcal{D}_{\text{tr}}\right|} \sum_{(x_i, y_i) \in \mathcal{D}_{\text{tr}}} \sigma(\lambda_i) L(w x_i, y_i) + C_r\|w\|^2,
        \end{aligned}
\end{array}
\end{equation*}
where $L(\cdot)$ is the cross-entropy loss, $\sigma(\cdot)$ is the sigmoid function, and $C_r$ is~a regularization parameter. In addition, $w$ serves as~a~linear classifier and $\sigma\kh{\lambda_i}$ can be viewed as the confidence of each data.

In the deterministic setting, where we implement all compared methods with full-batch, the training set, the validation set and the test set contain $5000$, $5000$ and $10000$ samples, respectively. For algorithms that incorporate inner iterations to approximate $y^*$ or $v^*$, we select the inner iteration number from the set $\left\{5 i \mid i=1,2,3,4\right\}$. The step size of inner iteration is selected from the set $\{0.01,0.1,1,10\}$ and the step size of outer iteration is chosen from $\left\{5\times10^i \mid i=-3,-2,-1,0,1,2,3\right\}$. Regarding the Hessian/Jacobian-free algorithm HJFBiO, we set the step size $\delta=1\times 10^{-5}$ to implement finite difference methods. The results are presented in \cref{fig:clean_detem_mean_compare}. Note that LancBiO is crafted for approximating the Hessian inverse vector product $v^*$, while the solid methods stocBiO, TTSA, F2SA, and HJFBiO are not. Consequently, concerning the residual norm of the linear system, i.e., $\norm{A_kv_k-b_k}$, we only compare the results with AmIGO-GD, AmIGO-CG and SOBA. Observe that the proposed subspace-based LancBiO achieves the lowest residual norm and the best test accuracy, and SubBiO is comparable to the other algorithms. Specifically, in~\cref{fig:clean_detem_mean_compare}, the efficiency of LancBiO stems from its accurate approximation of the linear system. Furthermore, we implement the solvers designed for the stochastic setting using mini-batch to enable a broader comparison in \cref{fig:clean_detem_stoc_mean_compare}. It is shown that the stochastic algorithm SOBA tends to converge faster initially, but algorithms employing a full-batch approach achieve higher accuracy.

\begin{figure*}[htbp]
	\centering
	\begin{minipage}{0.36\textwidth}
		\centering
		\includegraphics[width=1\linewidth]{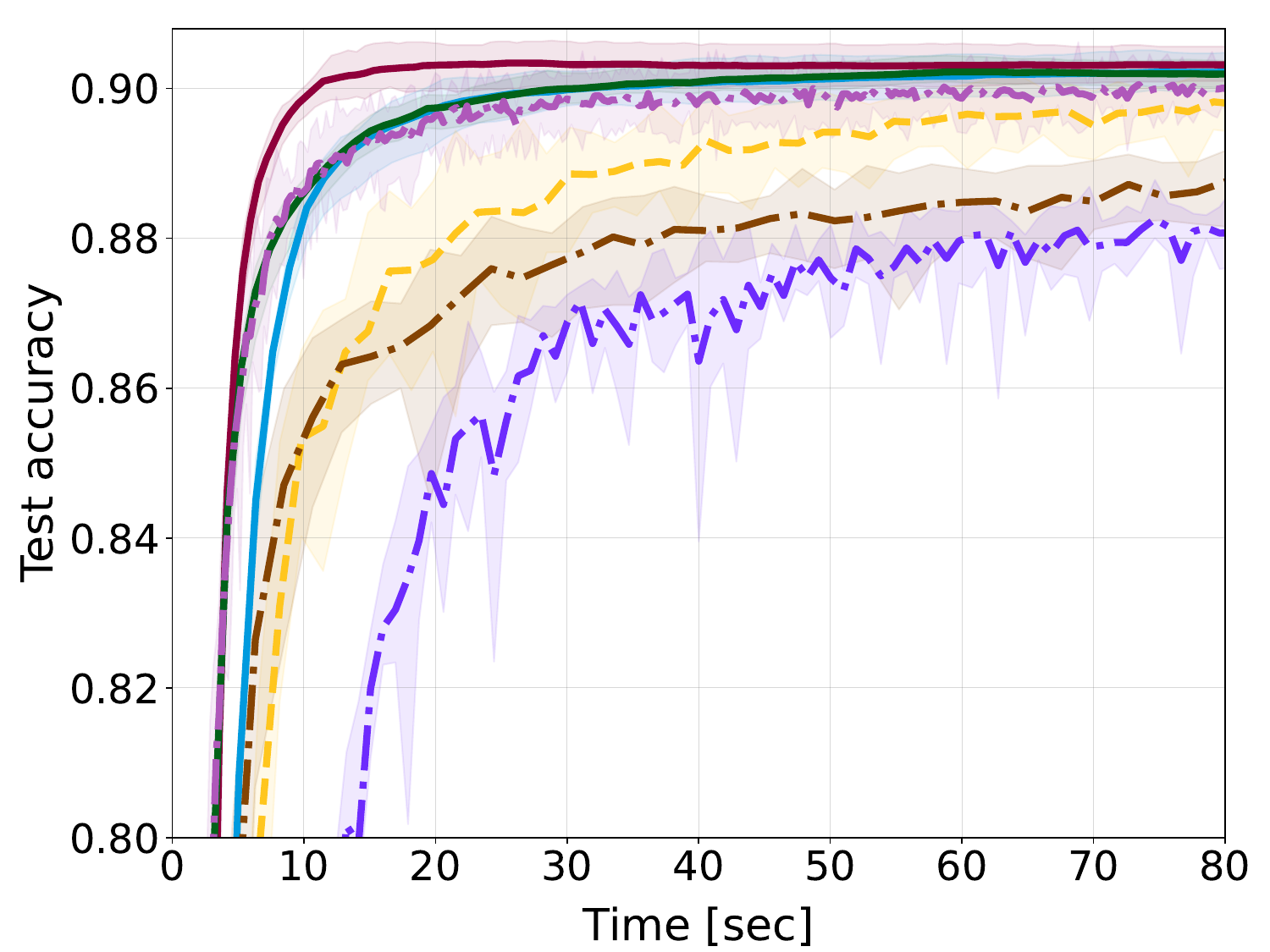}
	\end{minipage}
	\begin{minipage}{0.36\textwidth}
		\centering
		\includegraphics[width=1\linewidth]{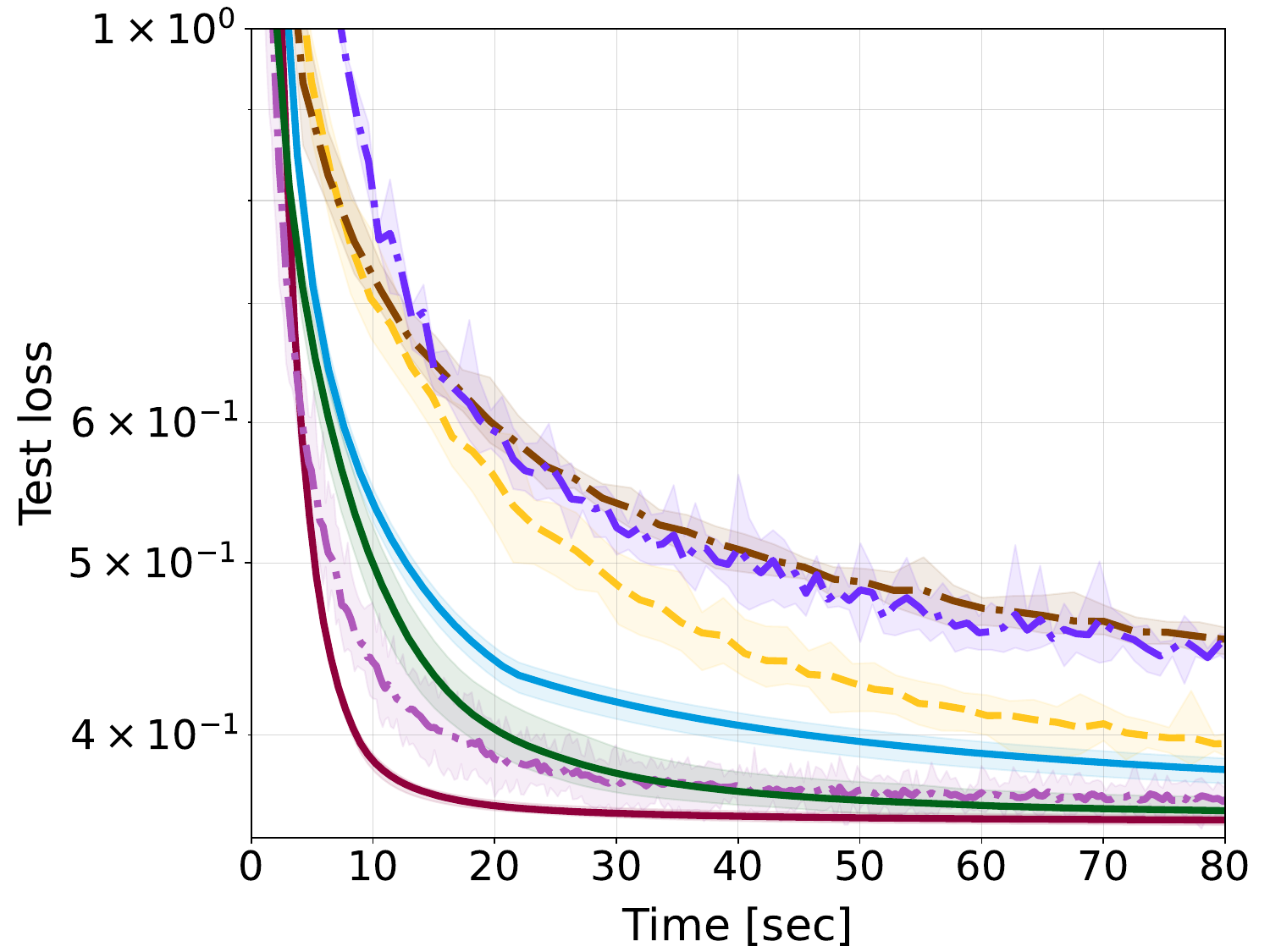}
	\end{minipage}
	\begin{minipage}{0.20\textwidth}
		\centering
		\includegraphics[width=1\linewidth]{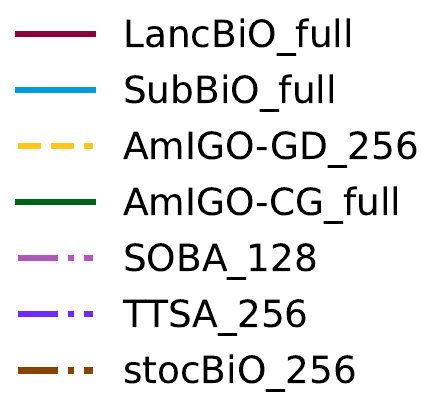}
	\end{minipage}
	\caption{Comparison of the bilevel algorithms on data hyper-cleaning task with mini-batch when~$p=0.5$. The training set, the validation set and the test set contain $5000$, $5000$ and $10000$ samples, respectively. {The post-fix of legend represents the batch size.} \boldt{Left:} test accuracy; \boldt{Right}:test loss.}
	\label{fig:clean_detem_stoc_mean_compare}
\end{figure*}

To explore the potential for extending our proposed methods to a stochastic setting, we also conduct an experiment with stochastic gradients. In this setting, where we implement all compared methods with mini-batch, the training set, the validation set and the test set contain $20000$, $5000$ and $10000$ samples, respectively. For algorithms that incorporate inner iterations to approximate $y^*$ or $v^*$, we select the inner iteration number from the set $\left\{3 i \mid i=1,2,3,4\right\}$. The step size of inner iteration is selected from the set $\{0.01,0.1,1,10\}$, the step size of outer iteration is chosen from $\left\{1\times10^i \mid i=-3,-2,-1,0,1,2,3\right\}$ and the batch size is picked from $\left\{32\times 2^i \mid i=0,1,2,3\right\}$. AmIGO-CG is not presented since it fails in this experiment in our setting. The results in~\cref{fig:clean_stoc_mean_compare} demonstrate that LancBiO maintains reasonable performance with stochastic gradients, exhibiting fast convergence rate, although the final convergence accuracy is slightly lower.

\begin{figure*}[htbp]
	\centering
	\begin{minipage}{0.36\textwidth}
		\centering
		\includegraphics[width=1\linewidth]{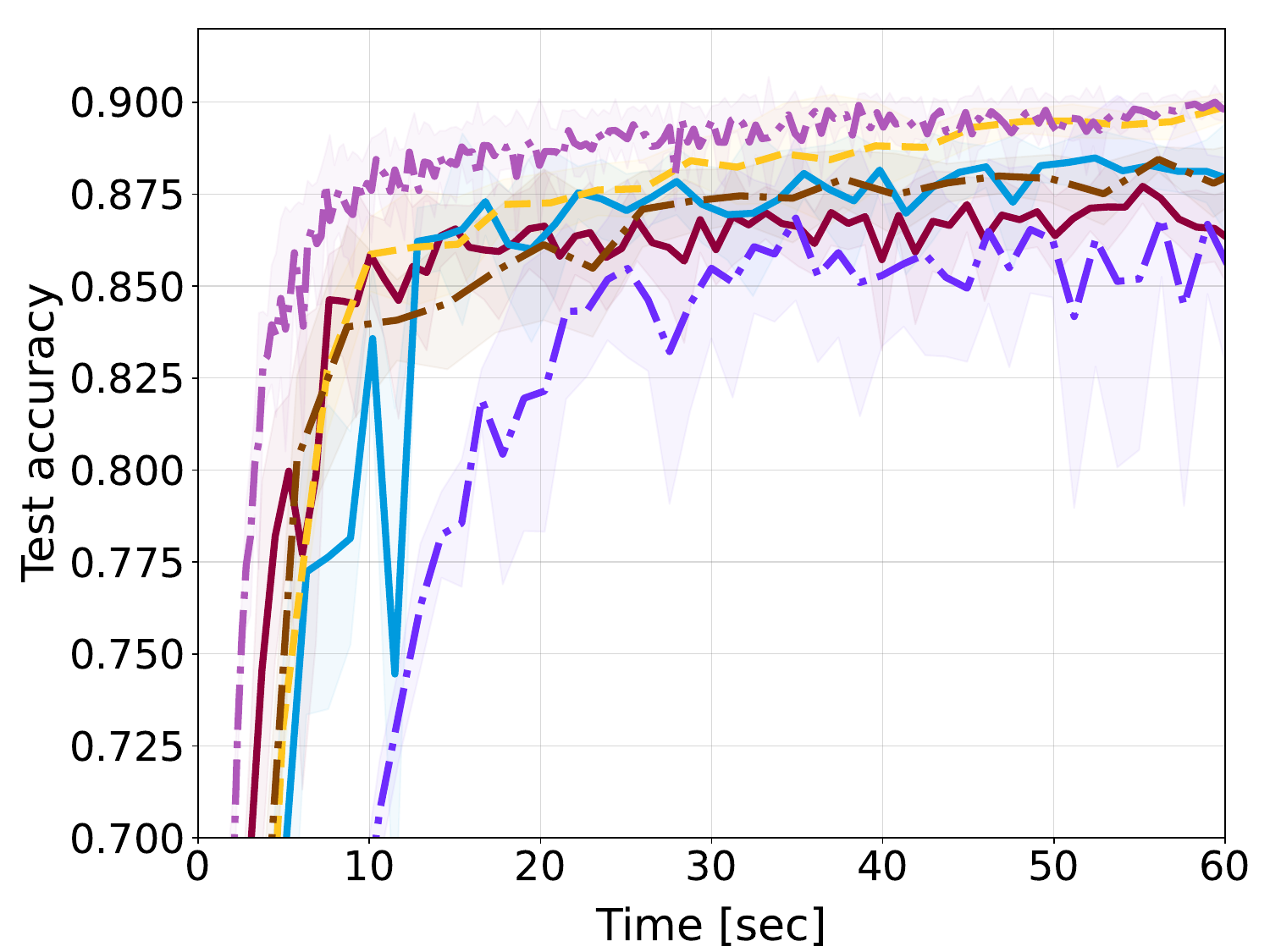}
	\end{minipage}
	\begin{minipage}{0.36\textwidth}
		\centering
		\includegraphics[width=1\linewidth]{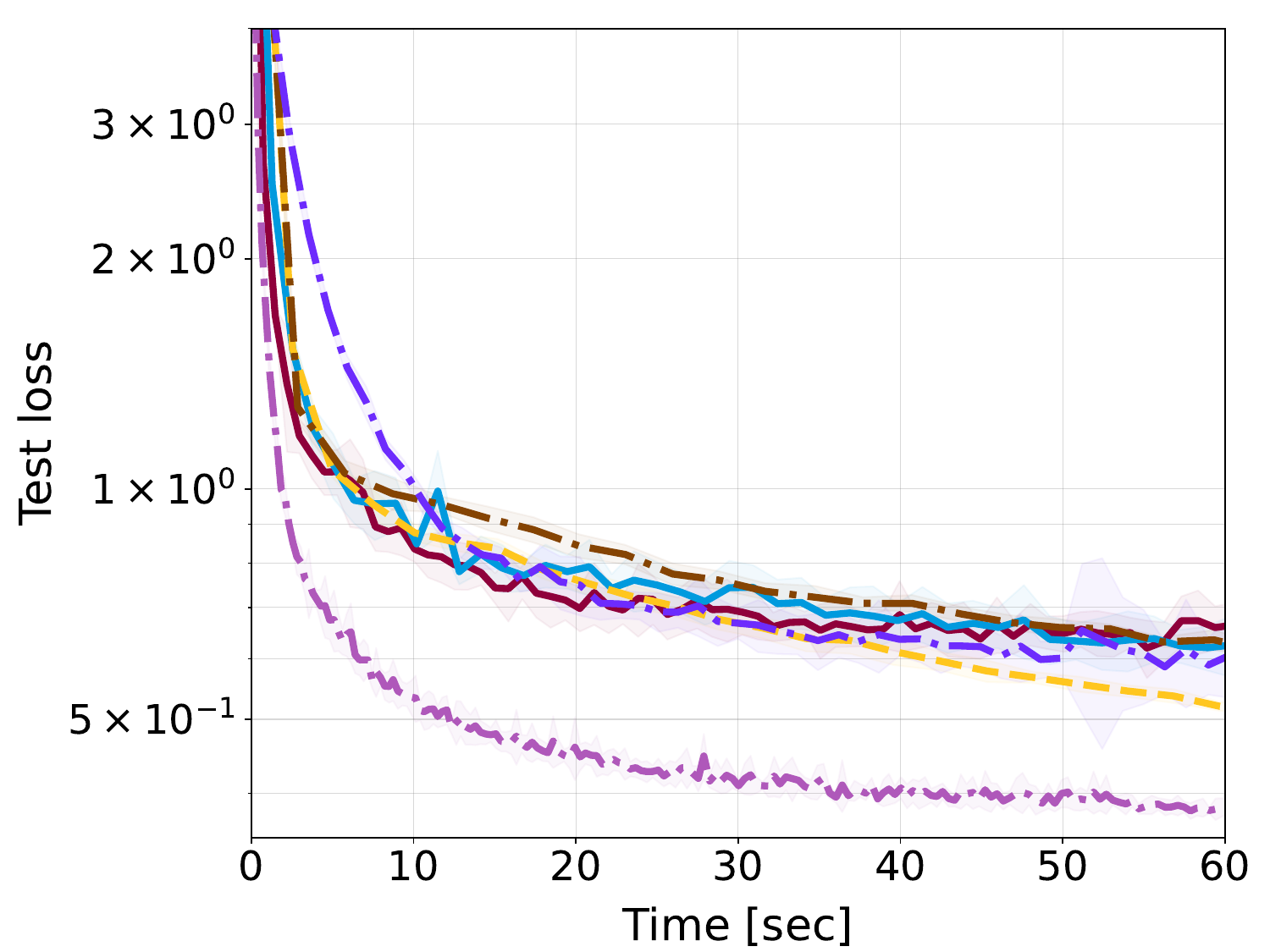}
	\end{minipage}
	\begin{minipage}{0.21\textwidth}
		\centering
		\includegraphics[width=1\linewidth]{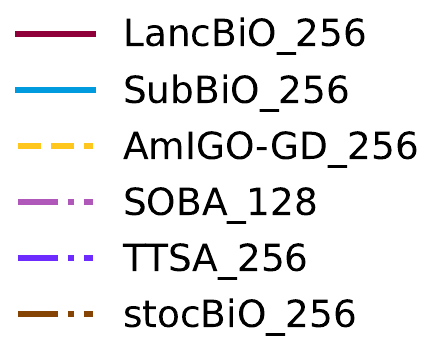}
	\end{minipage}
	\caption{Comparison of the bilevel algorithms on data hyper-cleaning task with mini-batch when~$p=0.5$. The training set, the validation set and the test set contain $20000$, $5000$ and $10000$ samples, respectively. {The post-fix of legend represents the batch size.} \boldt{Left:} test accuracy; \boldt{Right}:test loss.}
	\label{fig:clean_stoc_mean_compare}
\end{figure*}

\revise{Additionally, we also evaluate the performance of bilevel algorithms on Fashion-MNIST \citep{xiao2017fashionMNIST} and Kuzushiji-MNIST \citep{clanuwat2018KMNIST} datasets, both of which present more complexity compared to MNIST. Specifically, Fashion-MNIST serves as a modern replacement for MNIST, featuring grayscale images of clothing items across $10$ categories, and Kuzushiji-MNIST is a culturally rich dataset of handwritten Japanese characters. The results, reported in Figures~\ref{fig:clean_data_Fashion} and~\ref{fig:clean_data_KMNIST}, reveal that LancBiO performs better than other algorithms and showcases robustness across various datasets, and SubBiO delivers a comparable convergence property.}

\begin{figure*}[htbp]
\begin{minipage}{\textwidth}
    \centering
    \includegraphics[width=0.9\linewidth]{fig/hyperclean/detem_80/legend.pdf}
\end{minipage}
\\
\begin{minipage}{0.33\textwidth}
    \centering
    \includegraphics[width=1\linewidth]{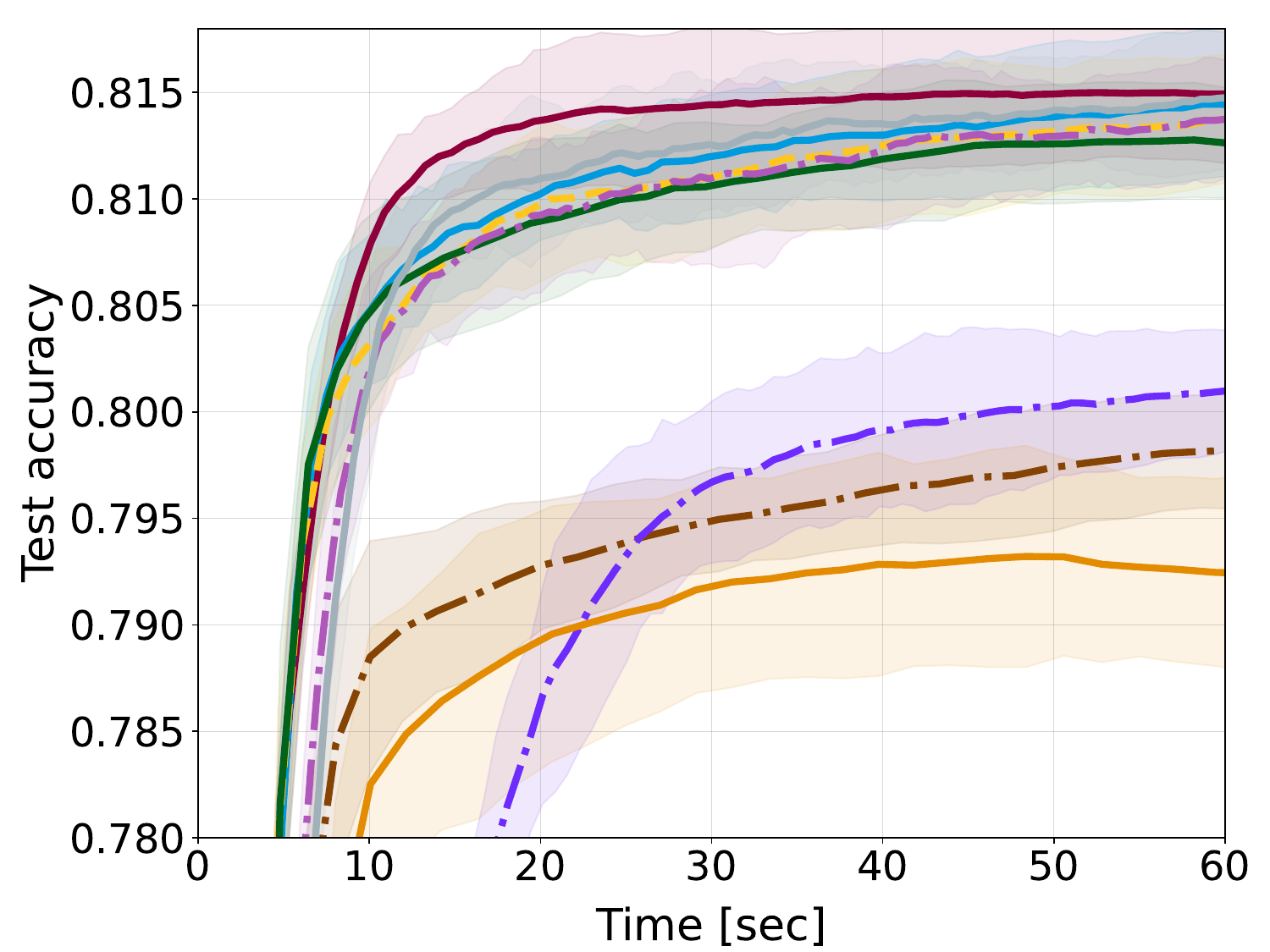}
\end{minipage}
\begin{minipage}{0.33\textwidth}
    \centering
    \includegraphics[width=1\linewidth]{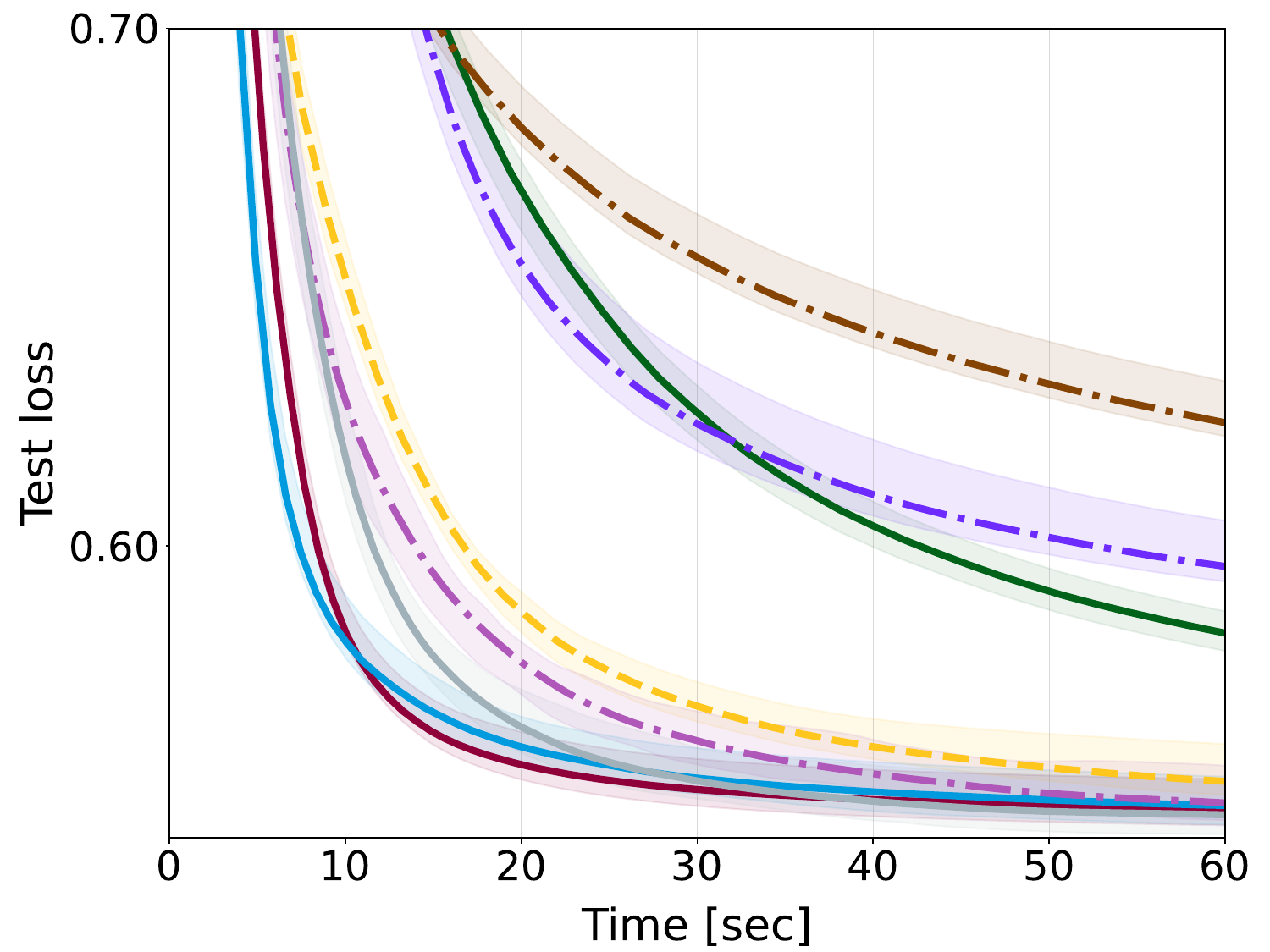}
\end{minipage}
\begin{minipage}{0.33\textwidth}
    \centering
    \includegraphics[width=1\linewidth]{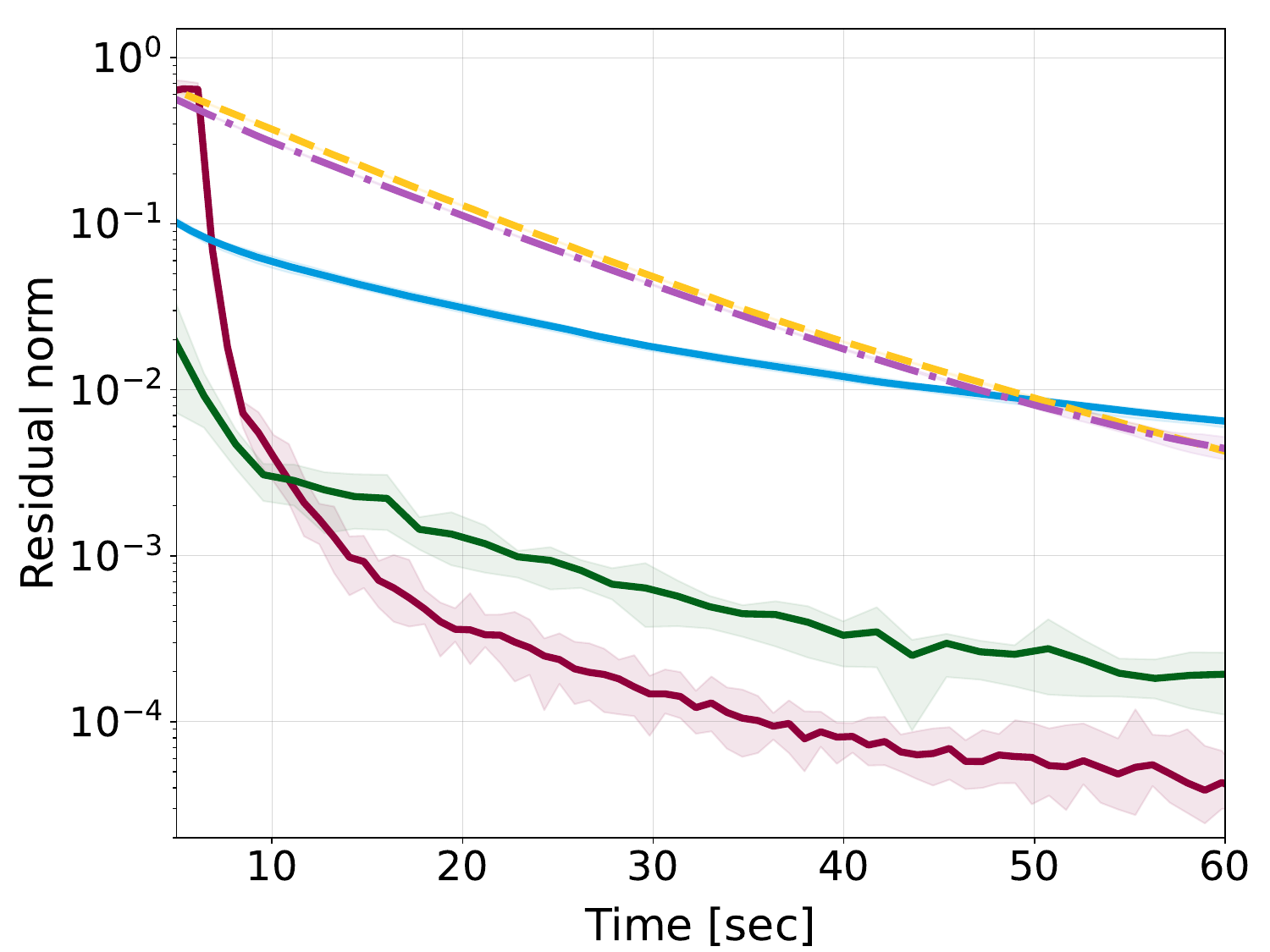}
\end{minipage}
\caption{\revise{Data hyper-cleaning task tested on the Fashion-MNIST dataset when~$p=0.5$. \boldt{Left:} test accuracy; \boldt{Center}: test loss; \boldt{Right}:~residual norm of the linear system, $\norm{A_kv_k-b_k}$.}}
\label{fig:clean_data_Fashion}
\end{figure*}

\begin{figure*}[htbp]
\begin{minipage}{\textwidth}
    \centering
    \includegraphics[width=0.9\linewidth]{fig/hyperclean/detem_80/legend.pdf}
\end{minipage}
\\
\begin{minipage}{0.33\textwidth}
    \centering
    \includegraphics[width=1\linewidth]{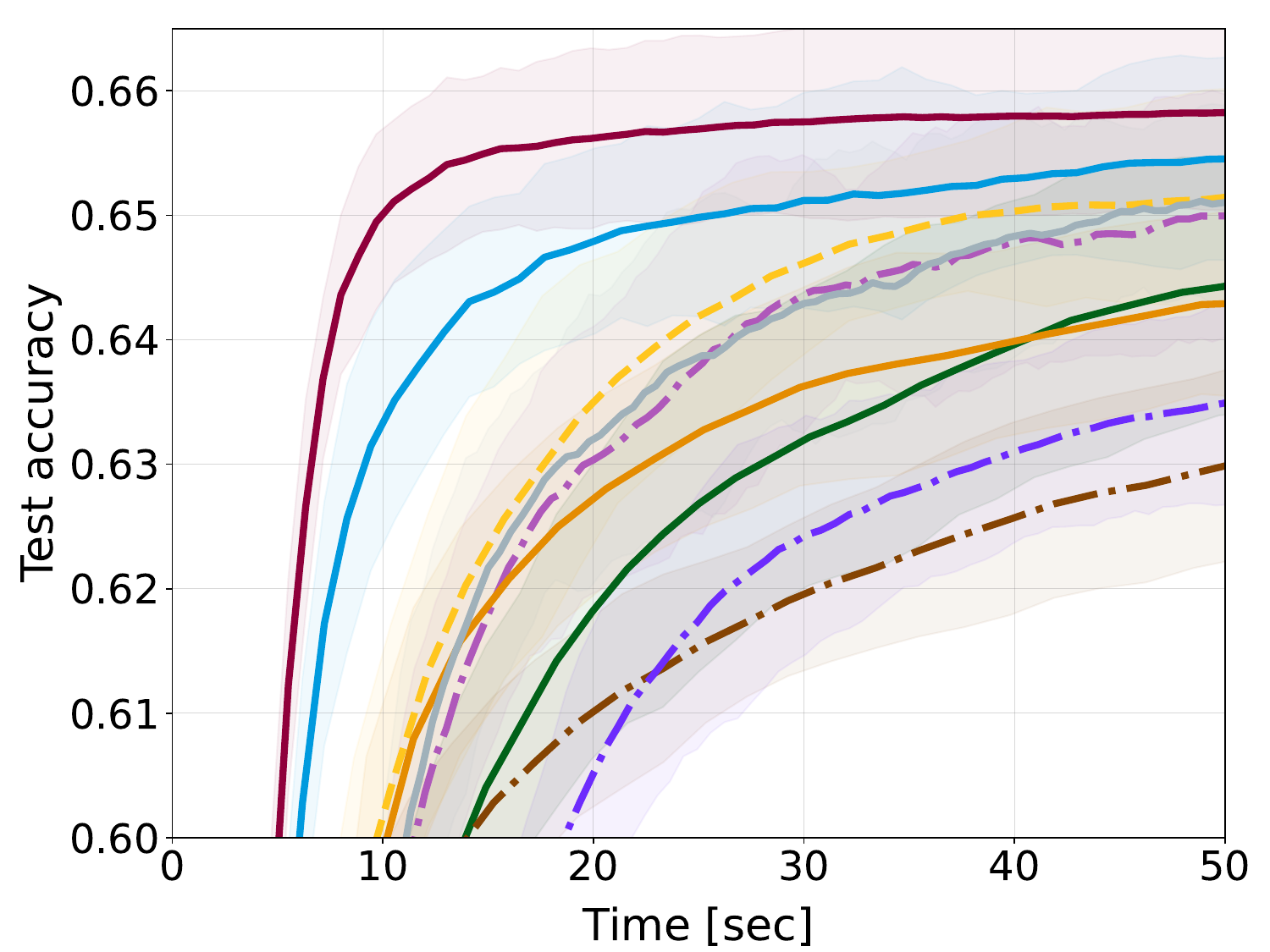}
\end{minipage}
\begin{minipage}{0.33\textwidth}
    \centering
    \includegraphics[width=1\linewidth]{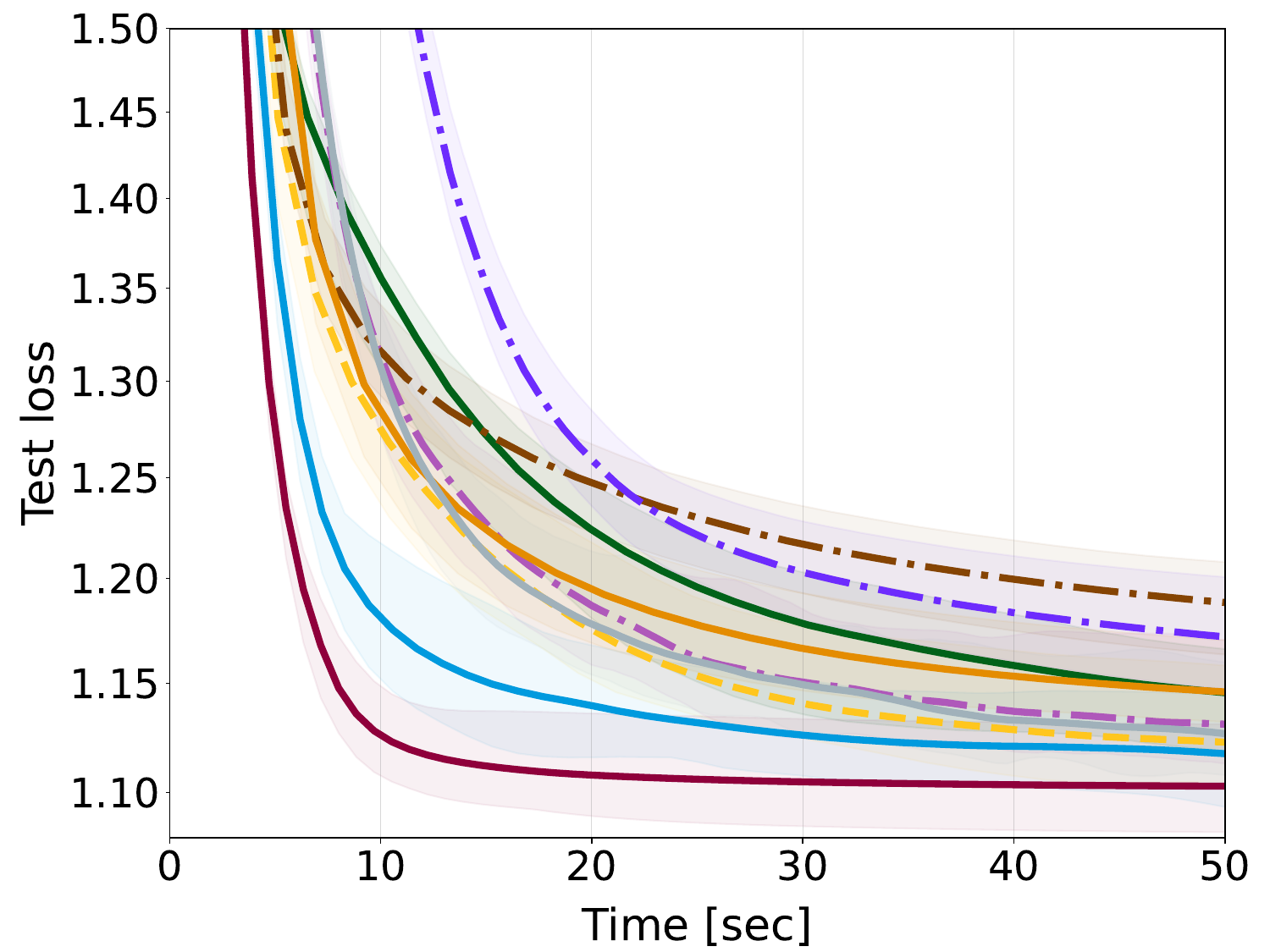}
\end{minipage}
\begin{minipage}{0.33\textwidth}
    \centering
    \includegraphics[width=1\linewidth]{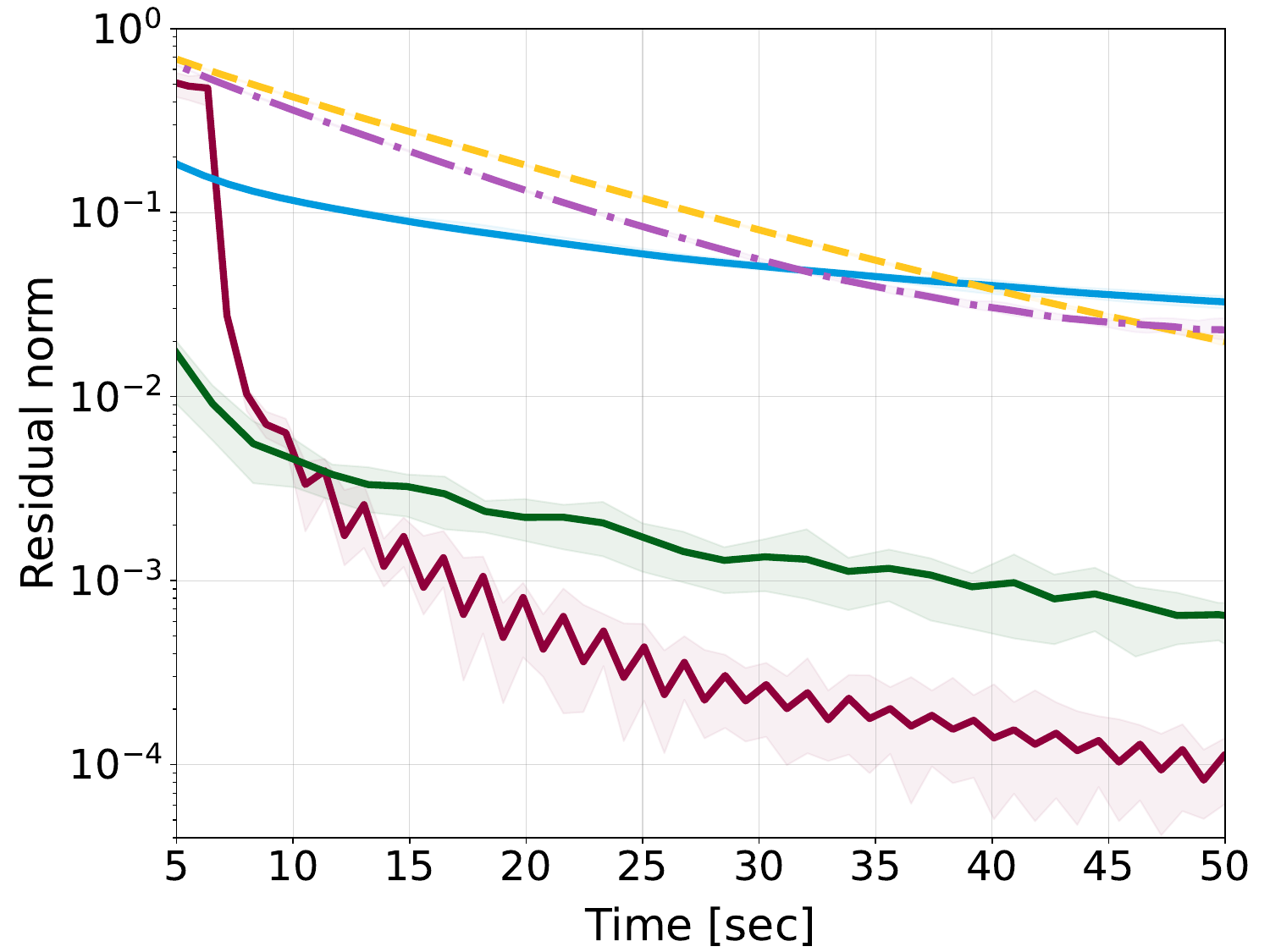}
\end{minipage}
\caption{\revise{Data hyper-cleaning task tested on the Kuzushiji-MNIST dataset when~$p=0.6$. \boldt{Left:} test accuracy; \boldt{Center}: test loss; \boldt{Right}:~residual norm of the linear system, $\norm{A_kv_k-b_k}$.}}
\label{fig:clean_data_KMNIST}
\end{figure*}

\subsection{Synthetic problems}
We concentrate on a synthetic scenario in bilevel optimization:
\begin{equation}\label{eq:synthe_sc}
\begin{array}{cl}
    \min\limits_{x \in \mathbb{R}^{d}}& f(x,y^*):= c_1\cos\kh{x^\top D_1 y^*} + \frac{1}{2} \norm{D_2x-y^*}^2,
    \\[2mm]
    \mathrm{s.\,t.}& 
    \begin{aligned}[t]
        y^* &=\argmin\limits_{y \in \mathbb{R}^{d}}  g(x,y)
        \\
        :&= c_2\sum_{i=1}^{d}{\sin(x_i+y_i)} + \log\kh{\sum_{i=1}^{d}{e^{x_iy_i}}} + \frac{1}{2}y^\top\kh{D_3+G}y,
    \end{aligned}
\end{array}
\end{equation}
where we incorporate the trigonometric and log-sum-exp functions to enhance the complexity of the objective functions. In addition, we utilize the positive-definite matrix $G$ to ensure~a~strongly convex lower-level problem, and diagonal matrices~$D_i$ ($i=1,2,3$) to control the condition numbers of both levels.

\begin{figure*}[htbp]
	\centering
	\begin{minipage}{0.37\textwidth}
		\centering
		\includegraphics[width=1\linewidth]{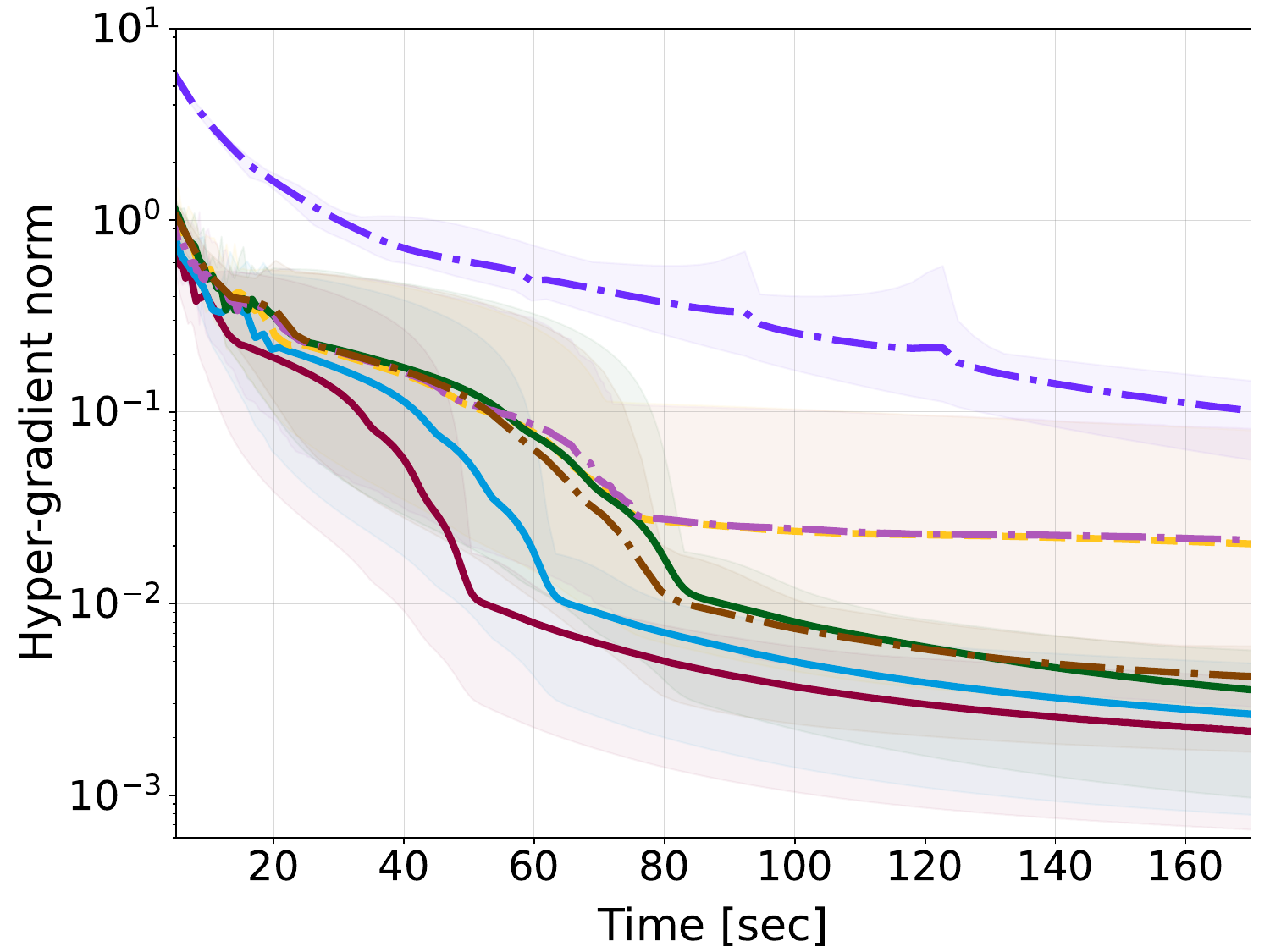}
	\end{minipage}
	\,
	\begin{minipage}{0.37\textwidth}
		\centering
		\includegraphics[width=1\linewidth]{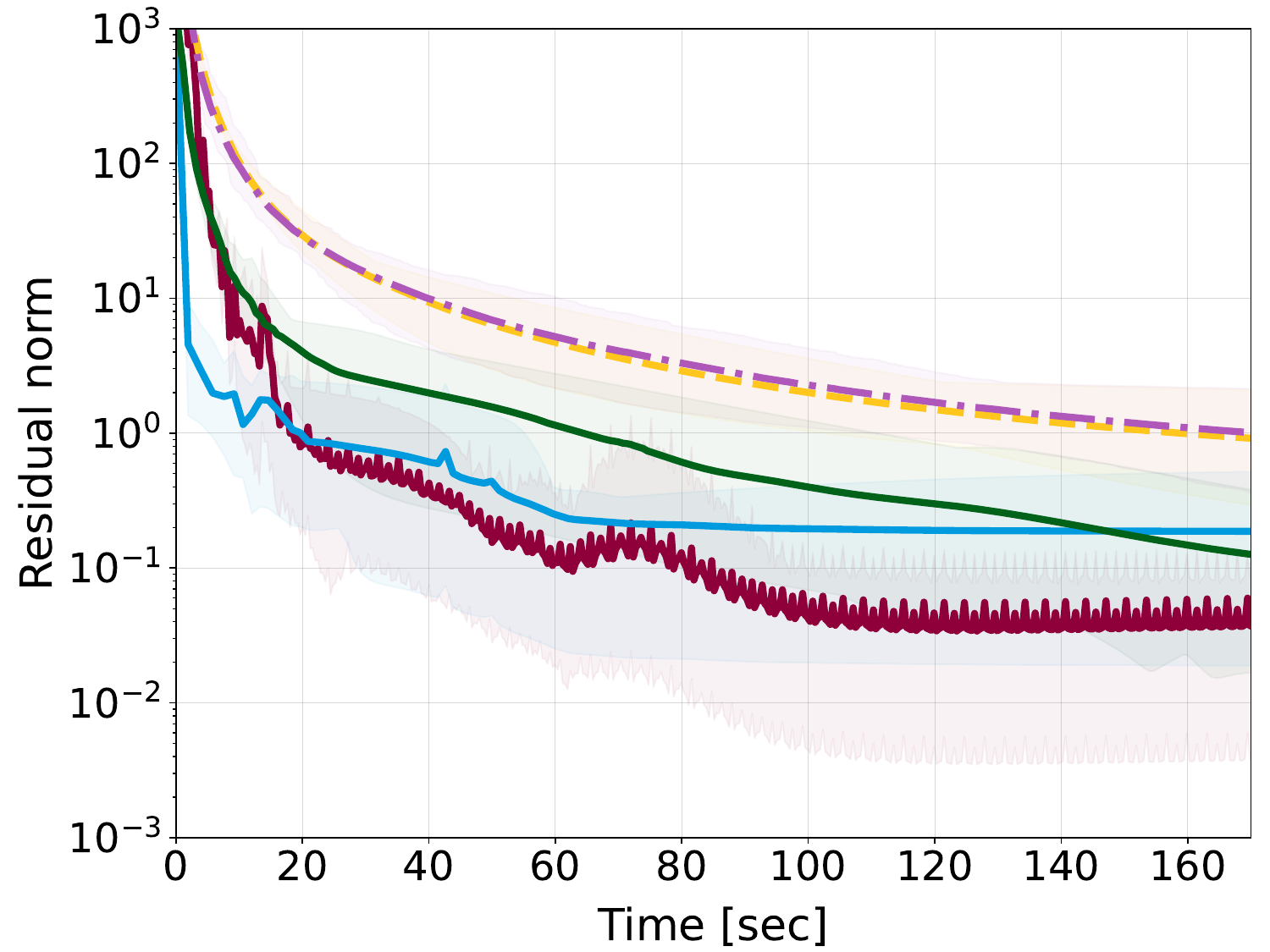}
	\end{minipage}
	\begin{minipage}{0.16\textwidth}
		\centering
		\includegraphics[width=1\linewidth]{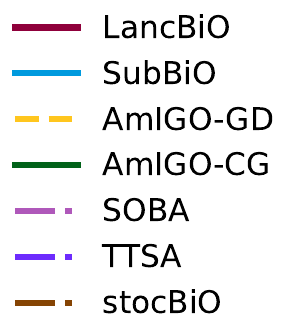}
	\end{minipage}
	\caption{Comparison of the bilevel algorithms on the synthetic problem. \boldt{Left:} norm  of the hyper-gradient; \boldt{Right}:~residual norm of the linear system, $\norm{A_kv_k-b_k}$.}
	\label{fig:sythe_mean_compare}
\end{figure*}

In this experiment, we set the problem dimension $d=10^4$ and the constants $c_1=0.1,\ c_2=0.5$. $G$~is constructed randomly with~$d$ eigenvalues from~$1$ to $10^5$. We generate entries of $D_1$, $D_2$ and $D_3$ from uniform distributions over the intervals $[-5,5]$, $[0.1,1.1]$ and $[0,0.5]$, respectively. Taking into account the condition numbers dominated by $D_i$ ($i=1,2,3$) and $G$, we choose $\lambda=1$ and $\theta = 10^{-5}$ for all algorithms compared after a manual search. 

It can be seen from \cref{fig:sythe_mean_compare} that LancBiO achieves the final accuracy the fastest, which benefits from the more accurate $v^*$ estimation. \cref{fig:sythe_seed4_compare} illustrates how variations in $m$ and $I$ influence the performance of LancBiO and AmIGO, tested across a range from $10$ to $150$ for $m$, and from $2$ to $10$ for $I$. For clarity, we set the seed of the experiment at $4$, and present typical results to encapsulate the observed trends. It is observed that the increase of $m$ accelerates the decrease in the residual norm, thus achieving better convergence of the hyper-gradient, which aligns with the spirit of the classic Lanczos process. 

When $m = 50$, the estimation of $v^*$ is sufficiently accurate to facilitate effective hyper-gradient convergence, which is demonstrated in \cref{fig:sythe_seed4_compare} that for $m\ge 50$, further increases in $m$ merely enhance the convergence of the residual norm. Under the same outer iterations, to attain a comparable convergence property, $I$ for AmIGO-CG should be set to~$10$. Furthermore, given that the number of Hessian-vector products averages at $(1+{1}/{m})$ per outer iteration for LancBiO, whereas AmIGO requires~$I\ge 2$, it follows that LancBiO is more efficient.

\begin{table*}[htbp]
\caption{\revise{Comparison on the synthetic problem~\eqref{eq:synthe_sc}. The dimension of problems is denoted by $d$. Results are averaged over 10 runs.}}
\begin{center}
\begin{footnotesize}
\setlength{\tabcolsep}{3pt} 
\begin{tabular}{lcccccccc}
\toprule
\multicolumn{1}{l}{\multirow{2}{*}{Algorithm}} & \multicolumn{2}{c}{$d=10$} & \multicolumn{2}{c}{$d=100$} & \multicolumn{2}{c}{$d=1000$} & \multicolumn{2}{c}{$d=10000$} \\ \cmidrule(l{2pt}r{2pt}){2-3} \cmidrule(l{2pt}r{2pt}){4-5} \cmidrule(l{2pt}r{2pt}){6-7} \cmidrule(l{2pt}r{2pt}){8-9}
& \multicolumn{1}{c}{UL Val.} & \multicolumn{1}{c}{Time (S)} & \multicolumn{1}{c}{UL Val.} & \multicolumn{1}{c}{Time (S)} & \multicolumn{1}{c}{UL Val.} & \multicolumn{1}{c}{Time (S)} &
\multicolumn{1}{c}{UL Val.} & \multicolumn{1}{c}{Time (S)} \\
\midrule
LancBiO   & $\expnumber{4.52}{-2}$ & ${0.32}$ & $\expnumber{6.37}{-2}$ &  $0.53$ & $\expnumber{5.29}{-2}$ & $1.30$  & $\expnumber{-1.21}{-2}$ & $\ \ \ 16.36$\\
SubBiO   & $\expnumber{3.73}{-2}$ & ${1.00}$ & $\expnumber{7.19}{-2}$&  $1.17$ & $\expnumber{4.63}{-2}$ & $1.72$  & $\expnumber{-2.91}{-2}$ & $\ \ \ 21.26$\\
AmIGO-GD   & $\expnumber{1.67}{-1}$ & ${2.44}$ & $\expnumber{1.05}{-1}$ &  $3.48$ & $\expnumber{1.05}{-1}$ & $1.46$  & $\ \ \ \expnumber{4.96}{-2}$ & $\ \ \  46.53$\\
AmIGO-CG   & $\expnumber{5.56}{-2}$ & ${0.40}$ & $\expnumber{7.65}{-2}$ &  $1.90$ & $\expnumber{5.73}{-2}$ & $2.44$  & $\ \ \ \expnumber{3.68}{-2}$ & $\ \ \ 25.00$\\
SOBA   & $\expnumber{1.70}{-1}$ & $0.60$ & $\expnumber{1.28}{-1}$ &  $1.64$ & $\expnumber{1.03}{-1}$ & $2.52$  & $\expnumber{\ \ \ 3.49}{-2}$ & ${\ \ \ 33.89}$\\
TTSA   & $\expnumber{5.66}{-2}$ & ${0.47}$ & $\expnumber{5.52}{-2}$ &  $0.89$ & $\expnumber{6.52}{-2}$ & $2.81$  & $\expnumber{\ \ \ 1.87}{-1}$ & $121.07$\\
stocBiO & $\expnumber{6.24}{-2}$ & ${0.29}$ & $\expnumber{6.02}{-2}$ &  $0.45$ & $\expnumber{5.21}{-2}$ & $1.34$  & $\expnumber{-1.30}{-2}$ & $\ \ \ 20.66$\\
\bottomrule
\end{tabular}
\end{footnotesize}
\end{center}
\label{tab:scale}
\end{table*}

\revise{Moreover, to illustrate how the proposed methods scale with increasing dimensions, we present the convergence time and the final upper-level (UL) value under different problem dimensions $d=10^{i}, i=1,2,3,4$ in~\cref{tab:scale}. The results show the robustness of the proposed methods across varying problem dimensions.}

\revise{In addition, as discussed in~\cref{app:LLPL}, to address the bilevel problem where the lower-level problem exhibits an indefinite Hessian, the framework LancBiO~(\cref{alg:LancBiO}) requires~a minor modification. Specifically, line~13 in~\cref{alg:LancBiO}, which solves a small-size tridiagonal linear system, will be replaced by solving~a low-dimensional least squares problem. We test the modified method LancBiO-MINRES on the following bilevel example borrowed from \citet{liu2023valueseq} with a non-convex lower-level problem,
\begin{equation}\label{eq:synthe_nc}
\begin{array}{cl}
    \min\limits_{x \in\mathbb{R}} & f(x,y^*):= \norm{x-a}^2+\norm{y^*-a-c}^2
    \\[2mm]
    \mathrm{s.\,t.}& y_i^* \in \argmin\limits_{y_i\in\mathbb{R}} \,\sin(x+y_i-c_i),\,\text{for}\,i=1,2,\ldots,d,
\end{array}
\end{equation}
where the subscript $i$ denotes the $i$-th component of a vector, while $a\in\mathbb{R}$ and $c\in\mathbb{R}^d$ are parameters. The results, reported in \cref{fig:non-convex}, imply the potential of extending our work to non-convex scenarios.}

\begin{figure*}[htbp]
\centering
\begin{minipage}{0.4\textwidth}
    \centering
    \includegraphics[width=1\linewidth]{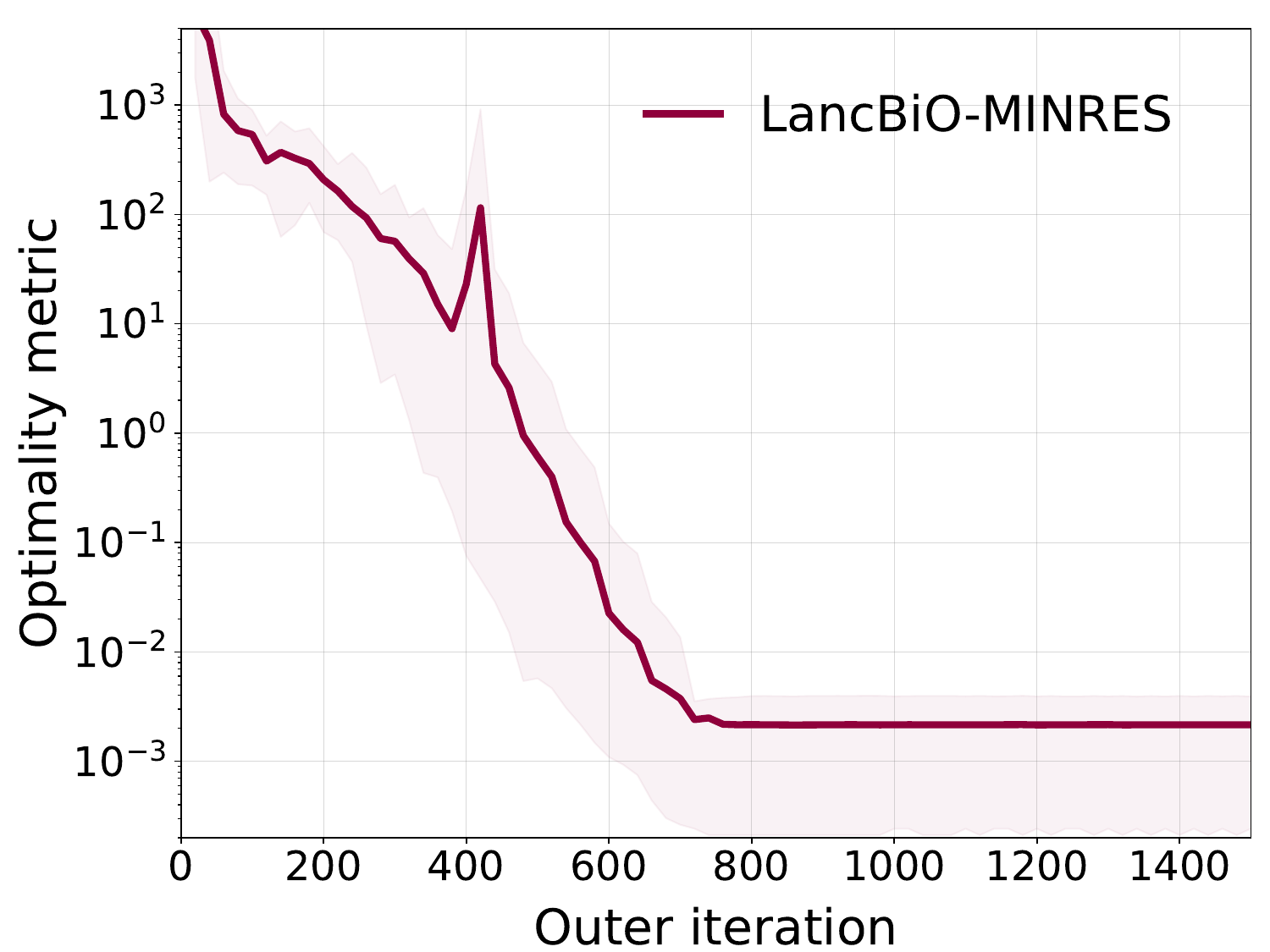}
\end{minipage}
\caption{\revise{Test LancBiO-MINRES on the synthetic problem \eqref{eq:synthe_nc} with $d=100$. The metric follows from the necessary conditions developed for bilevel problems without lower-level convexity in Theorem 1 {of~\citet{xiao2023galet}.}}}
\label{fig:non-convex}
\end{figure*}

\subsection{Logistic regression on $20$Newsgroup}
Consider the hyper-parameters selection task on the $20$Newsgroups dataset \citep{grazzi2020iteration}, which contains $c=20$ topics with around $18000$ newsgroups posts represented in a feature space of dimension $l= 130107$. The goal is to simultaneously train a linear classifier $w$ and determine the optimal regularization parameter $\zeta$. The task is formulated as follows,
\begin{equation*}
\begin{array}{cl}
         \min\limits_\lambda& \mathcal{L}_{val}(\zeta, w^*) := \frac{1}{\left|\mathcal{D}_{\text{val}}\right|} \sum_{(x_i, y_i) \in \mathcal{D}_{\text{val}}} L(w^* x_i, y_i) 
         \\[5mm]
         \mathrm{s.\,t.}&
        \begin{aligned}[t]
            w^* &= \argmin_w\ \mathcal{L}_{tr}(w, \zeta) \\
            :&= \frac{1}{\left|\mathcal{D}_{\text{tr}}\right|} \sum_{(x_i, y_i) \in \mathcal{D}_{\text{tr}}} L(w x_i, y_i) + \frac{1}{c l} \sum_{i=1}^c \sum_{j=1}^l \zeta_j^2 w_{i j}^2.
        \end{aligned}
\end{array}
\end{equation*}
where $L(\cdot)$ is the cross-entropy loss and $\{\zeta^2_j\}$ are the non-negative regularizers.

The experiment is implemented in the deterministic setting, where we implement all compared methods with full-batch, the training set, the validation set and the test set contain $5657$, $5657$ and $7532$ samples, respectively. For algorithms that incorporate inner iterations to approximate $y^*$ or $v^*$, we select the inner iteration number from the set $\left\{5 i \mid i=1,2,3,4\right\}$. To guarantee the optimality condition of the lower-level problem, we adopt a decay strategy for the outer iteration step size, i.e., $\lambda_k = \lambda/k^{0.4}$, for all algorithms. The constant step size $\theta$ of inner iteration is selected from the set $\{0.01,0.1,1,10\}$ and the initial step size $\lambda$ of outer iteration is chosen from $\left\{5\times10^i \mid i=-3,-2,-1,0,1,2,3\right\}$. The results are presented in~\cref{fig:hypasele_detem_mean_compare}. In this setting, AmIGO-CG exhibits slightly better performance in reducing the residual norm. Nevertheless, under the same time, LancBiO implements more outer iterations to update $x$, which optimizes the hyper-function more efficiently.

\end{document}